\newtheorem{proposition}{Proposition}[section]
\newtheorem{lemma}[proposition]{Lemma}
\newtheorem{remark}[proposition]{Remark}
\newtheorem{corollary}[proposition]{Corollary}
\newtheorem{definition}[proposition]{Definition}
\newtheorem{theorem}[proposition]{Theorem}
\newtheorem{example}[proposition]{Example}
\DeclareMathOperator{\Hom}{Hom}
\DeclareMathOperator{\Frac}{Frac}
\DeclareMathOperator{\spm}{Spm}
\DeclareMathOperator{\spec}{Spec}
\DeclareMathOperator{\spa}{Spa}
\DeclareMathOperator{\Char}{Char}
\DeclareMathOperator{\Nil}{Nil}
\DeclareMathOperator{\pr}{pr}
\DeclareMathOperator{\Mod}{Mod}
\DeclareMathOperator{\alHom}{alHom}
\DeclareMathOperator{\Alg}{Alg}
\DeclareMathOperator{\Tor}{Tor}
\DeclareMathOperator{\Ext}{Ext}
\DeclareMathOperator{\id}{id}
\DeclareMathOperator{\Perf}{Perf}
\DeclareMathOperator{\B}{\textbf{B}}
\DeclareMathOperator{\G}{G}
\DeclareMathOperator{\Supp}{Supp}
\DeclareMathOperator{\fet}{f\acute{e}t}
\DeclareMathOperator{\cH}{H}
\DeclareMathOperator{\Bdr}{B_{dR,\Delta}}
\DeclareMathOperator{\BHT}{B_{HT,\Delta}}
\DeclareMathOperator{\Bcris}{B_{cris,\Delta}}
\DeclareMathOperator{\Bst}{B_{st,\Delta}}
\DeclareMathOperator{\C}{\textbf{C}}
\begin{document}
	\title{Perfectoid Spaces in Multivariate $p$-adic Hodge Theory}

          \author{Aprameyo Pal}
	\address{(Aprameyo Pal) Harish-Chandra Research Institute, A CI of Homi Bhabha National Institute, Chhatnag Road, Jhunsi, Prayagraj - 211019, India}
	\email{aprameyopal@hri.res.in}

      		\author{Rohit Pokhrel} 
	\address{(Rohit Pokhrel) Harish-Chandra Research Institute, A CI of Homi Bhabha National Institute, Chhatnag Road, Jhunsi, Prayagraj - 211019, India}
	\email{rohitpokhrel@hri.res.in}
	
	\maketitle

\begin{abstract}
Perfectoid spaces have become a crucial tool in $p$-adic geometry, serving as a bridge between adic spaces in characteristic $0$ and those in characteristic $p$. In this article, we develop a systematic way to study the structure of perfectoid spaces within the setting of multivariate $p$-adic Hodge theory over a variant of the rings introduced in \cite{Bri}.
\end{abstract}
 
\tableofcontents
	
\section{Introduction}
\footnote{2020 \it Mathematics Subject Classification. 
\rm Primary: 14G45, 14G17, 14G20, 12J25, 11S37.}

\begin{subsection}{Overview}
   The theory of Multivariate $p$-adic Hodge Theory was initiated by a series of works by Gergely Zábrádi \cite{Zab,Zab2}. He developed multivariate $(\phi_\Delta,\Gamma_\Delta)$-modules, extending Fontaine's $(\phi,\Gamma)$-modules, to study $p$-adic representations of $\G_{K,\Delta}=\G_K\times\cdots\times \G_K$, which is the $t$-fold direct product of the absolute Galois group $\G_K$ of a $p$-adic field $K$ (here $t=|\Delta|$). While the direct product operation on the Galois group side seems harmless, the corresponding object on Fontaine's side isn't straightforward due to the involvement of tensor products. Tensor products of representations are typically chaotic and are frequently studied in the Langlands program. Possible applications of this theory are discussed in \cite{Pal-Zab,CarKedZab}.

  Furthermore, the objects involved in this theory appear intrinsically interesting and possess a multivariate flavor. One example hinting at multivariate objects is the work by Brinon, Chiarellotto, and Mazzari \cite{Bri}, where they developed classical objects (e.g., the period rings $\Bdr$ and $\BHT$) previously studied in Fontaine's classical work \cite{Fontaine}. Continuing this, Poyeton and Vanni \cite{poyeton} defined other period rings, $\Bcris$ and $\Bst$.

  In this article, we'll provide a cleaner way to study these objects. This approach makes our tools closely related to the classical theory, yet it also proves that the multivariate theory is fundamentally different because its objects are distinct. Moreover, this article will be a first step towards exploring the geometric side of multivariate $p$-adic Hodge Theory, which provides the core motivation for studying it in the classical setup.

  This geometric theory does not build upon Scholze's theory of diamonds as in \cite{CarKedZab}, but rather constructs the objects of interest within the multivariate framework itself.
\end{subsection}

\begin{subsection}{Notations}
    We fix the following notations throughout the article;
    \begin{itemize}
        \item $\mathbb{N}^*=\{1,2,\cdots\}$ the set of positive integers,
        \item $\mathbb{N}=\{0,1,\cdots\}$ the set of non-negative integers,
        \item $p$ is a fixed prime,
        \item $\Delta=\{1,\cdots,t\}$ a finite set with $t$ elements,
        \item $\mathcal{O}_K$ is the ring of integers of a non-Archimedean field $K$,
        \item $\hat{\bullet}$ is the completion of $\bullet$.
    \end{itemize}
\end{subsection}

\begin{subsection}{Main Results}
   Let $K_0$ be a $p$-adic field. We recall the following definition:
  \begin{definition}[\cite{Sch}, Definition 1.2]
  A perfectoid field is a complete topological field $K$ whose topology is induced by a non-discrete valuation of rank 1, such that the Frobenius $\Phi$ is surjective on $K^\circ/p$.
  \end{definition}
  Denote by $\C$ the completion of the algebraic closure of $K_0$. Let $K_1,\ldots,K_t$ be perfectoid fields over $K_0$ such that $K_0\subset K_\alpha\subset \C$. In \cite{Bri}, the authors consider $K_\alpha=\C$ for all $\alpha\in\Delta$ and $K_0=W(k_0)$, where $k_0$ is the residue field of $K_0$. Our basic ring of interest is
  \begin{align*}
  K_\Delta=K_1\hat{\otimes}_{K_0}\cdots\hat{\otimes}_{K_0}K_t.
  \end{align*}
  Here $\hat{\otimes}$ denotes the completed tensor product given in \cite{BGR}.
  It turns out that $K_\Delta$ is highly analogous to a perfectoid field in many properties. Specifically, it is a reduced ring of dimension zero whose localizations are all perfectoid fields, meaning every element is either a unit or a zero divisor. However, this ring is very different from a field in the sense that, excluding trivial cases, it has zero divisors and is not Noetherian. Because of its non-Noetherian nature, the adic space over $K_\Delta$ is very chaotic, and some techniques from classical theory \cite{Hubet} developed by Huber do not seem to hold in general. Thanks to the work of many people, this kind of problem is now taken care.

  The ring $K_\Delta$ is not strongly Noetherian; that is, $K_\Delta\left<T_1,\cdots,T_n\right>$ is not Noetherian for some $n$ (in fact, for $n=0$). However, we can conduct a thorough study of this ring by developing functional analysis over $K_\Delta$. In this regard, we define Banach spaces over $K_\Delta$ and prove an open mapping theorem for them. We believe most properties of Banach spaces can be translated to this category, but we are not pursuing that in this article to keep its length reasonable. Some important properties that a Tate algebra over a non-Archimedean field possesses are the division algorithm, the Weierstrass preparation theorem, and the fact that every ideal is closed. Yet, we do not know any analogue of these properties in this setup \cite{BGR,FormalRigid}. Consequently, we do not know whether the quotient of a Tate algebra possesses a Banach algebra structure in a multivariate context. Therefore, we only consider the case where the quotient has a Banach algebra structure in Theorem \ref{IntegralTateAcyclicity}; in other results, this assumption can be dropped.

  The inspiration for this article is an analogous figure to one given in Chapter 7 of \cite{Sch2} for a perfectoid field $K$ and its tilt $K^\flat$, extended to include the ring $K^\circ_\Delta$ and its tilt $K^{\circ\flat}_\Delta$. For a topological ring $A$, we denote $A^\circ$ to be the set of power-bounded elements.

\begin{center}
    \begin{tikzpicture}
        \draw[blue,thick] (-0.1,0) -- (-0.1,2) ;
        \draw [blue,thick, decorate,decoration={brace,amplitude=2pt},xshift=-2pt] (-.2,0) -- (-0.2,2) node [blue,midway,xshift=-1.2cm] {$\spec K^\circ/\varpi$};
        \draw[blue,thick] (-0.1,2) arc (0:75:3cm) node[anchor=east]{$\spec K^\circ$};
        \draw[inner color=blue, outer color=white,fill,dotted] (-2.0,4.7) circle (.4cm);

        \draw[red,thick] (0.1,0) -- (0.1,2);
        \draw[red,thick] (0.1,2) arc (180:105:3cm) node[anchor=west]{$\spec K^{\circ\flat}$};
        \draw[inner color=red,outer color=white,fill,dotted] (2.0,4.7) circle (.4cm) ;
        \draw [red,thick, decorate,decoration={brace,amplitude=2pt,mirror},xshift=2pt] (0.2,0) -- (0.2,2) node [red,midway,xshift=1.2cm] {$\spec K^{\flat\circ}/\varpi^\flat$};
    \end{tikzpicture}
\end{center}

This diagram can be summarized by the following chain of equivalences:
\begin{align*}
    K_{\fet}\cong K^{\circ a}_{\fet}\cong (K^{\circ a}/\varpi)_{\fet}=(K^{\flat\circ a}/\varpi^\flat)_{\fet}\cong K^{\flat\circ a}_{\fet}\cong K_{\fet}.
\end{align*}
In words, this chain of equivalences states that the \'etale site corresponds to the \'etale site on the almost integral side, in the sense of Faltings' almost mathematics, which in turn corresponds to the almost \'etale site of its special point, which is equal to that of the tilted side. This can then be lifted back to obtain the \'etale site of the tilted almost integral level, which corresponds to the \'etale site of the tilted perfectoid field.

We also prove the analogous result for $K_\Delta$, and using the idea given in \cite{Jac}, we can prove that the ring $K_\Delta$ has one or infinitely many prime ideals indexed by a family $\mathfrak{S}$ defined below. We call $K_\Delta$ a perfectoid $\Delta$-field, although it is not a field in most cases. In fact, $K_\Delta$ is a perfectoid field if and only if $\mathfrak{S}$ is a singleton. Thus, one can say that $\mathfrak{S}$ measures the obstruction for $K_\Delta$ to be a field. The set of power-bounded elements $K^\circ_\Delta$ also has infinitely many minimal prime ideals indexed by $\mathfrak{S}$ and maximal ideals indexed by $\mathfrak{K}$ (defined below), and is of dimension 1. We also prove that the tilted side exhibits a similar phenomenon, and hence we obtain a diagram as above, fiberwise. Envoking the same strategy as given in \cite{Sch}, we get the analogous theorem:
\begin{theorem}
    The categories $(K_\Delta)_{\fet}$ and $(K^{\flat}_{\Delta})_{\fet}$ are equivalent. This runs through the following chain of equivalence
    \begin{align*}
        (K_\Delta)_{\fet}\cong (K^{\circ a}_\Delta)_{\fet}\cong (K^{\circ a}_\Delta/\varpi)_{\fet}= (K^{\flat\circ a}_\Delta/{\varpi^\flat})_{\fet}\cong (K^{\flat\circ a}_\Delta)_{\fet}\cong (K^\flat_\Delta)_{\fet}.
    \end{align*}
\end{theorem}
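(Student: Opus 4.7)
The plan is to establish each of the five equivalences in the displayed chain separately, following Scholze's blueprint \cite{Sch} for the classical perfectoid case adapted to the multivariate setting in which $K_\Delta$ is not a field but rather a reduced zero-dimensional ring whose localizations at the primes indexed by $\mathfrak{S}$ are perfectoid fields. Since essentially every geometric statement about $K_\Delta$ should reduce to a fiberwise statement on these localizations, the consistent strategy will be: decompose over $\mathfrak{S}$, apply Scholze's theorem fiberwise, and then glue using the Banach structure and the open mapping theorem developed earlier in the article.

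For the outer equivalences $(K_\Delta)_{\fet} \cong (K^{\circ a}_\Delta)_{\fet}$ and $(K^{\flat\circ a}_\Delta)_{\fet} \cong (K^{\flat}_\Delta)_{\fet}$, I would show that every finite \'etale $K_\Delta$-algebra $B$ carries a canonical integral model $B^\circ$ of power-bounded elements which is almost finite \'etale over $K^\circ_\Delta$, and conversely that any almost finite \'etale $K^{\circ a}_\Delta$-algebra becomes finite \'etale after inverting $\varpi$. The existence of the integral model uses that $B$ is itself a Banach $K_\Delta$-algebra via the finite \'etale structure, and the almost finite \'etale condition can be verified after base change to each fiber over $\mathfrak{S}$, where it is Scholze's original almost purity theorem.

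For the henselian lifting steps $(K^{\circ a}_\Delta)_{\fet} \cong (K^{\circ a}_\Delta/\varpi)_{\fet}$ and its tilted analogue, the key input is that $(K^\circ_\Delta, \varpi)$ is a henselian pair, a consequence of the $\varpi$-adic completeness of $K^\circ_\Delta$ coming from its construction as a completed tensor product. Deformation of (almost) finite \'etale algebras then proceeds by standard Gabber--Ramero theory. The middle equality $(K^{\circ a}_\Delta/\varpi)_{\fet} = (K^{\flat\circ a}_\Delta/{\varpi^\flat})_{\fet}$ is formal once the isomorphism of almost algebras $K^\circ_\Delta/\varpi \cong K^{\flat\circ}_\Delta/\varpi^\flat$ is in place, which should follow from the compatibility of tilting with completed tensor products, realizing $K^\flat_\Delta$ as the tilt of $K_\Delta$ fiber by fiber.

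The main obstacle I expect lies at the integral almost-purity input: in Scholze's setting $K^\circ$ is a rank-one valuation ring, whereas $K^\circ_\Delta$ is neither a valuation ring nor Noetherian and carries infinitely many minimal and maximal primes indexed by $\mathfrak{S}$ and $\mathfrak{K}$. My plan is to circumvent this via a faithfully flat descent along the localization maps to the primes in $\mathfrak{K}$, reducing the almost finite \'etale condition over $K^\circ_\Delta$ to the classical almost purity theorem applied in each factor. Verifying that the almost structure (depending on $\varpi$) is compatible with these localizations, so that the ideal $\mathfrak{m}$ governing almost mathematics behaves well under localization and descent, is where the Banach-theoretic machinery, and in particular the open mapping theorem, will be essential.
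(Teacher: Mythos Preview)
Your architecture for the three middle links matches the paper: the henselian lifting steps are handled via Gabber--Ramero (the paper invokes \cite[Thm.~5.3.27]{ART} since $(\varpi)$ is a tight ideal), and the middle equality is the isomorphism $K^\circ_\Delta/\varpi\cong K^{\flat\circ}_\Delta/\varpi^\flat$ built into the tilting construction. The divergence is in the outer equivalences, i.e.\ the almost-purity input.

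The paper does \emph{not} argue by descent along $\spec K^\circ_\Delta$. In characteristic $p$ it proves almost purity directly, with no fiberwise reduction: since a finite \'etale algebra over a perfect ring is perfect, one writes $\varpi^N e=\sum x_i\otimes y_i$ for the diagonal idempotent and extracts $p^m$-th roots to obtain $\varpi^{N/p^m}e\in S^\circ\otimes_{R^\circ}S^\circ$, giving uniform almost finite projectivity and unramifiedness in one stroke. For mixed characteristic the paper works on the \emph{adic} spectrum $X=\spa(K_\Delta,K^\circ_\Delta)$ rather than the Zariski one: each completed residue field $\widehat{k(x)}$ is an honest perfectoid field over some $K_s$, Fontaine--Wintenberger gives $\widehat{k(x)}_{\fet}\cong\widehat{k(x^\flat)}_{\fet}$, and an Elkik-type 2-colimit over rational neighborhoods globalizes this to $R_{\fet}\cong R^\flat_{\fet}$; the loop is then closed against the char-$p$ result already in hand. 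The open mapping theorem is used only for acyclicity of the integral structure sheaf, not for almost purity.

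Your proposed route through faithfully flat descent along $\{\mathfrak{m}_s\}_{s\in\mathfrak{K}}$ is the step I would flag as a genuine gap. The index set $\mathfrak{K}$ is typically infinite, and while verifying a local \emph{property} at all maximal localizations is legitimate, descending the almost-\'etale \emph{structure} requires producing an almost idempotent in $(B^\circ\otimes_{K^\circ_\Delta}B^\circ)_*$ from its fiberwise avatars; there is no evident mechanism assembling this out of $\prod_{s}K^\circ_s$, and the almost ideal $\mathfrak{h}$ does not obviously commute with that product. The paper's direct char-$p$ argument sidesteps the issue entirely, and the adic-point argument replaces infinite Zariski descent with a filtered 2-colimit where gluing is automatic by henselian approximation.
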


\end{subsection}

\begin{subsection}{Acknowledgment}
The second author acknowledges the support of the HRI Institute Fellowship for Research Scholars, and the first author acknowledges the SERB MATRICS grant (MTR/2022/000302). We would also like to thank Anand Uday Chitrao and Rakesh Pawar for their valuable discussions and suggestions, which greatly assisted us in this work.
\end{subsection}

\section{Objects in mixed characteristics setups}\label{section1}

Let $K_0$ be a $p-$adic field with ring of integers $\mathcal{O}_{K_0}$ and with a fixed algebraic closure $\overline{K}_0$. Assume all the valuations are normalized. Consider perfectoid fields $K_\alpha, \alpha\in\Delta$ such that $K_0\subseteq K_\alpha\subseteq \C$. Our basic ring of interest is $\mathcal{O}_{K,\Delta}=\mathcal{O}_{K_1}\hat{\otimes}_{\mathcal{O}_{K_0}}\cdots\hat{\otimes}_{\mathcal{O}_{K_0}}\mathcal{O}_{K_t}$. This is the completion of $\mathcal{O}:=\mathcal{O}_{K_1}\otimes_{\mathcal{O}_{K_0}}\cdots\otimes_{\mathcal{O}_{K_0}}\mathcal{O}_{K_t}$ with respect to the metric defined by the semi-norm

\begin{align*}
	|x|_\Delta:=\inf\bigl\{\max |x_1|\cdots|x_t| \mid x=\sum x_1\otimes\cdots\otimes x_t\bigr\}.
\end{align*}

Let $\varpi_\alpha$ be a pseudo-uniformizer of $K_\alpha$, for $\alpha\in\Delta$. Throughout the article we write $\varpi_\alpha$ for $1\otimes\cdots\otimes\varpi_\alpha\otimes\cdots\otimes 1$, $\varpi_\alpha$ in $\alpha-$th place and consider the ideal $I_\varpi$ generated by $\varpi_\alpha$ for $\alpha\in \Delta$.  We call this a pseudo-uniformizing ideal. 

\begin{proposition}
	$\mathcal{O}_{K,\Delta}$ is $I_\varpi-$adically complete for every pseudo-uniformizing ideal $I_\varpi$.
\end{proposition}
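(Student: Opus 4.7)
The plan is to identify the seminorm topology on the algebraic tensor product $\mathcal{O}$ with the $I_\varpi$-adic topology; once this identification is in hand, $\mathcal{O}_{K,\Delta}$ (defined as the seminorm completion of $\mathcal{O}$) is simultaneously the $I_\varpi$-adic completion $\varprojlim_n \mathcal{O}/I_\varpi^n$, and the statement follows from the standard fact that the completion of a ring along a finitely generated ideal is itself complete along that ideal.

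One inclusion is immediate: each $\varpi_\alpha$ has $|\varpi_\alpha|_\Delta = |\varpi_\alpha| < 1$, so every element of $I_\varpi^n$ is a finite $\mathcal{O}$-combination of monomials $\prod_\alpha \varpi_\alpha^{a_\alpha}$ of total degree at least $n$, and therefore has seminorm at most $c^n$ with $c := \max_\alpha |\varpi_\alpha|$. The converse inclusion is where the perfectoid hypothesis enters: each $\mathcal{O}_{K_\alpha}$ is the valuation ring of a perfectoid field, so any element of norm at most $|\varpi_\alpha|^n$ is divisible by $\varpi_\alpha^n$. Given $x \in \mathcal{O}$ with $|x|_\Delta < c'^{nt}$, where $c' := \min_\alpha |\varpi_\alpha|$, I would invoke the definition of infimum to pick a finite representation $x = \sum_i x_1^{(i)} \otimes \cdots \otimes x_t^{(i)}$ with $\max_i \prod_\alpha |x_\alpha^{(i)}| < c'^{nt}$. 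For each summand, a geometric-mean pigeonhole argument forces some factor $x_{\alpha_i}^{(i)}$ to have norm strictly less than $c'^n \leq |\varpi_{\alpha_i}|^n$; the valuation-ring property then puts $x_{\alpha_i}^{(i)} \in \varpi_{\alpha_i}^n \mathcal{O}_{K_{\alpha_i}}$, so the whole tensor lies in $\varpi_{\alpha_i}^n \mathcal{O} \subseteq I_\varpi^n \mathcal{O}$, and summing yields $x \in I_\varpi^n \mathcal{O}$.

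I expect the main technical nuisance to lie in the infimum-versus-minimum subtlety: since the infimum defining $|\cdot|_\Delta$ need not be attained, one must carefully verify that a strict bound $|x|_\Delta < c'^{nt}$ suffices to produce a representation with $\max_i \prod_\alpha |x_\alpha^{(i)}| < c'^{nt}$, which is the usual $\delta$-approximation of an infimum. Beyond this the proof is mechanical assembly: the comparison of topologies on $\mathcal{O}$ promotes to the equality of the seminorm and $I_\varpi$-adic completions of $\mathcal{O}$, and since $I_\varpi$ is finitely generated, the $I_\varpi$-adic completion is itself $I_\varpi$-adically complete, giving the desired identification $\mathcal{O}_{K,\Delta} = \varprojlim_n \mathcal{O}_{K,\Delta}/I_\varpi^n\mathcal{O}_{K,\Delta}$.
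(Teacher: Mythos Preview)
Your argument is correct and takes a somewhat different, more structural route than the paper. The paper works directly with the completed ring $\mathcal{O}_{K,\Delta}$: it verifies injectivity and surjectivity of the canonical map to $\varprojlim_n \mathcal{O}_{K,\Delta}/I_\varpi^n$ by hand, using only the one-sided inclusion $I_\varpi^n \subseteq \{x : |x|_\Delta \leq (\max_\alpha|\varpi_\alpha|)^n\}$ to see that any compatible system of lifts is Cauchy and hence converges. You instead work on the uncompleted tensor product $\mathcal{O}$, establish the full equivalence of the seminorm and $I_\varpi$-adic topologies there (the new content being your converse inclusion via the pigeonhole and valuation-ring step), and then invoke the standard fact that the $I$-adic completion along a finitely generated ideal $I$ is itself $I$-adically complete. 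Your route buys a cleaner justification of the point the paper treats briskly---namely, that the Cauchy limit of lifts actually lands in the correct coset modulo $I_\varpi^n$---since your topology comparison shows that sufficiently small seminorm balls in $\mathcal{O}$ are genuinely contained in $I_\varpi^n$. One minor remark: the divisibility step you attribute to the perfectoid hypothesis really only uses that each $\mathcal{O}_{K_\alpha}$ is a valuation ring, so the argument goes through for arbitrary nonarchimedean $K_\alpha$.
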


\begin{proof}
     We want to show
     \begin{align*}
     	 \mathcal{O}_{K,\Delta}&\to \varprojlim\limits_{n\in\mathbb{N}^*}\mathcal{O}_{K,\Delta}/I^n_{\varpi}\\
     	                                 x     &\mapsto (x\mod I^n_\varpi)_{n\in\mathbb{N}^*}
     \end{align*}
     is an isomorphism of topological rings. The kernel of the map is $\bigcap\limits_{n\in\mathbb{N}^*}I^n_{\varpi}$. Let $x\in\bigcap\limits_{n\in\mathbb{N}^*}I^n_\varpi$. For a particular $n$, we have $x\in I^n_\varpi$ then $x=\sum\limits_{i_1+\cdots+i_t\geq n}r_i \varpi_1^{i_1}\cdots\varpi^{i_t}_t$ with $r_i=r_{i1}\otimes\cdots\otimes r_{it}$ elementary tensors such that $|\varpi_\alpha|<|r_{i\alpha}|\leq 1$ in $K_\alpha$ and therefore $|x|_\Delta\leq |\varpi_1|^{r_1}\cdots |\varpi_t|^{r_t}\leq \max\limits_i|\varpi_i|^n$. Therefore, $|x|_\Delta=0$ giving $x=0$ proving $\bigcap\limits_{n\in\mathbb{N}^*}I^n_\varpi=0$. Hence, the above map is injective. To prove surjectivity, suppose $(\overline{x}_1,\overline{x}_2,\cdots)\in\varprojlim\limits_{n\in\mathbb{N}^*}\mathcal{O}_{K,\Delta}/I^n_\varpi$. Then for $m,n\in\mathbb{N}^*, m\geq n$ we have $x_m\equiv x_n\mod I^n_\varpi$ where $x_i$ is a lift of $\overline{x}_i\in\mathcal{O}_{K,\Delta}/I^i_\varpi$. Then $|x_m-x_n|_\Delta\leq \max\limits^t_{i=1}|\varpi_i|^n$ and hence $\{x_n\}$ is a Cauchy sequence which converges in $\mathcal{O}_{K,\Delta}$ whose image under the above map is an element that is equivalent to the given sequence. Therefore, $\mathcal{O}_{K,\Delta}$ is $I_\varpi-$adically complete.
\end{proof}

Let $J_\varpi$ be an ideal generated by family of pseudo-uniformizers $\{\varpi_1,\cdots,\varpi_t\}$ inside $\mathcal{O}$ and let $X_\varpi=\varprojlim\limits_{n\in\mathbb{N}^*}\mathcal{O}/J^n_\varpi$ be the $J_\varpi-$adic completion of $\mathcal{O}$. Define an ordering on the set of those families as $\varpi\leq \varpi'$ if and only if $X_{\varpi'}$ is $J_{\varpi}-$adically complete. Write 
\begin{align*}
	X:=\varinjlim\limits_\varpi X_\varpi
\end{align*}
Then it is clear that $X$ is complete with respect to any ideal generated by uniformizers $\{\varpi_1,\cdots,\varpi_t\}$ and is minimal with respect to that property in the sense that if $Y$ is an extension of $\mathcal{O}$ which is complete for every ideal generated by uniformizers $\{\varpi_1,\cdots,\varpi_t\}$, there is a unique continuous injection $X\to Y$ compatible with the direct system. 

\begin{proposition}
	The unique injective continuous map $X\to \mathcal{O}_{K,\Delta}$ is an isomorphism. 
\end{proposition}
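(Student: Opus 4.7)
The plan is to reduce the claim to surjectivity, since injectivity is built into the universal characterization of $X$ combined with the preceding proposition (which ensures $\mathcal{O}_{K,\Delta}$ is a legitimate target ring, being $I_\varpi$-adically complete for every $\varpi$). For surjectivity, fix $x \in \mathcal{O}_{K,\Delta}$ and write $x = \lim_n y_n$ for a $|\cdot|_\Delta$-Cauchy sequence $\{y_n\} \subset \mathcal{O}$. After passing to a subsequence one may assume $|y_n - y_{n-1}|_\Delta < c^n$ for a fixed $c \in (0,1)$. The aim is to show $\{y_n\}$ is also Cauchy in the $J_\varpi$-adic topology on $\mathcal{O}$ for some suitable choice of $\varpi$, whence $x$ lies in the image of $X_\varpi \to X \to \mathcal{O}_{K,\Delta}$.

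The technical heart is a converse to the estimate from the preceding proposition (where $x \in J_\varpi^n$ forced $|x|_\Delta \le \max_\alpha |\varpi_\alpha|^n$). Using density of the value groups of the perfectoid fields $K_\alpha$, I fix pseudo-uniformizers $\varpi_\alpha \in \mathcal{O}_{K_\alpha}$ whose normalized valuations all equal a common value $c$. For each $n$, unraveling the infimum defining the semi-norm produces a \emph{finite} expansion $y_n - y_{n-1} = \sum_k z_1^{(n,k)} \otimes \cdots \otimes z_t^{(n,k)}$ with $\max_k \prod_\alpha |z_\alpha^{(n,k)}|_\alpha < c^n$ and each $z_\alpha^{(n,k)} \in \mathcal{O}_{K_\alpha}$. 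Setting $m_\alpha^{(n,k)} := \lfloor \log_c |z_\alpha^{(n,k)}|_\alpha \rfloor \ge 0$, the factor $z_\alpha^{(n,k)}/\varpi_\alpha^{m_\alpha^{(n,k)}}$ lies in $\mathcal{O}_{K_\alpha}$, while the multiplicative bound on products translates into the additive inequality $\sum_\alpha m_\alpha^{(n,k)} \ge n - t$. Consequently each elementary tensor, and hence the finite sum $y_n - y_{n-1}$, lies in $J_\varpi^{\,n-t}$.

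It follows that $\{y_n\}$ is Cauchy in the $J_\varpi$-adic topology and converges to some $\tilde x \in X_\varpi$. By continuity of the natural map $X_\varpi \to \mathcal{O}_{K,\Delta}$, the image of $\tilde x$ equals $x$, so $x$ lies in the image of $X \to \mathcal{O}_{K,\Delta}$, proving surjectivity. That the resulting bijection is a topological isomorphism then follows from the compatibility of the $J_\varpi$-adic topologies with the $I_\varpi$-adic topologies on $\mathcal{O}_{K,\Delta}$ furnished by the preceding proposition.

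The main obstacle is the key estimate in the second paragraph: translating the multiplicative bound $\prod_\alpha |z_\alpha^{(n,k)}|_\alpha < c^n$ on elementary tensor factors into an additive lower bound $\sum_\alpha m_\alpha^{(n,k)} \ge n-t$ on the exponents of a single chosen pseudo-uniformizer. This is exactly where density of the value groups is indispensable: it permits a uniform choice of $\varpi_\alpha$ with common valuation across all $\alpha \in \Delta$, so that a single logarithm comparison produces an integer exponent controlling membership in $J_\varpi^{\,n-t}$. The constant loss $-t$ is harmless, since only divergence to infinity of the exponent is needed for the Cauchy property.
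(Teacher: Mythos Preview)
Your proof is correct and follows essentially the same strategy as the paper's: both reduce to showing that a $|\cdot|_\Delta$-Cauchy sequence is $J_\varpi$-adically Cauchy for a suitable choice of $\varpi$, the paper phrasing this as ``$X$ is $|\cdot|_\Delta$-complete'' while you phrase it as surjectivity. Your treatment of the key converse estimate is in fact more careful than the paper's (which simply asserts $|x_{n+1}-x_n|_\Delta < \min_\alpha |\varpi_\alpha|^n \Rightarrow x_{n+1}-x_n \in J_\varpi^n$); one minor point is that the existence of pseudo-uniformizers $\varpi_\alpha$ with a common valuation follows not from density of the value groups but simply from $K_0 \subseteq K_\alpha$, so one may take $\varpi_\alpha$ to be the image of a uniformizer of $K_0$.
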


\begin{proof}
Identifying with the image, we can assume that $X\subset\mathcal{O}_{K,\Delta}$. We only need to show that $X$ is complete with respect to the seminorm, which is sufficient due to the property mentioned above that $X$ satisfies. Let $\{x_n\}$ be a Cauchy sequence in $X$. For each $\alpha\in \Delta$, consider $\varpi_\alpha\in\mathcal{O}$. Then $|\varpi_\alpha|_\Delta>|\varpi^2_{\alpha}|_\Delta>\cdots$ decreasing to 0. Then, by considering a suitable subsequence, we can assume that
\begin{align*}
|x_{n+1} - x_n|_\Delta < \min\{|\varpi_1|^n, \ldots, |\varpi_t|^n\},
\end{align*}
i.e., \( x_{n+1} - x_n \in J^n_\varpi \), where \( J^n_\varpi \) is the ideal generated by \( \varpi_1, \ldots, \varpi_t \). Since \( X \) is \( J_\varpi \)-adically complete, there exists an element \( x \in X \) such that for each \( m \in \mathbb{N}^* \), there exists \( n \in \mathbb{N}^* \) with \( x - x_n \in J^m_\varpi \). Hence,
\begin{align*}
|x - x_n|_\Delta \to 0 \quad \text{as } n \to \infty.
\end{align*}
Therefore, $\lim\limits_{n\to\infty}x_n=x$ with respect to the metric $|\bullet|_\Delta$.
\end{proof}

In view of \cite{BGR} \S 2.1.7, the $L\hat{\otimes}_R$- is a left adjoint functor to the continuous $\Hom$ functor. The same argument as in the usual tensor product shows that
\begin{align*}
	\mathcal{O}_{K,\Delta}\left[\frac{1}{\varpi}\right]=K_1\hat{\otimes}_{K_0}\cdots\hat{\otimes}_{K_0} K_t=K_\Delta,
\end{align*}
which is the localization with respect to $S=\{1,\varpi,\varpi^2,\cdots\}$ for any uniformizer $\varpi\in K_0$. It is easy to check that $|\varpi x|_\Delta=|\varpi|_\Delta|x|_\Delta$ for all $x\in\mathcal{O}_{K,\Delta}$, so $|\bullet|_\Delta$ has a unique extension to $K_\Delta$. 

This shows that $K_\Delta$ is Huber with $\mathcal{O}_{K,\Delta}$ as its ring of definition and $I_\varpi$ as its ideal of definition. We can easily see that the norm $|\bullet|_\Delta$ on $K_\Delta$ is given by
\begin{align*}
	|x|_\Delta:=\inf\Big\{\max(|x_1|\cdots|x_t|)\mid x=\sum x_1\otimes\cdots\otimes x_t\Big\}.
\end{align*}

We will now classify all the prime ideals of $K_\Delta$ and $\mathcal{O}_{K,\Delta}$.

Let us consider a family 
\begin{align*}
	s=\big\{s_\alpha: K_\alpha\to \C \mid\text{ continuous embeddings over }K_0\big\}_{\alpha\in\Delta}
\end{align*}
and call it an embedding family and consider $K_s=\widehat{s_1(K_1)\cdots s_t(K_t)}$ be the completion of compositum of $s_1(K_1),\cdots,s_t(K_t)$ which is again a perfectoid field . Define an equivalence relation on the set of embedding families as $(s_\alpha)_{\alpha\in\Delta}=s\sim s'=(s'_\alpha)_{\alpha\in\Delta}$ if there exists a continuous isomorphism $u: K_s\to K_{s'}$ such that for all $\alpha\in\Delta$ the diagram
\[
\begin{tikzcd}
	K_s\arrow[rr,"u"] &  &  K_{s'}\\
	        & \arrow[ul,"s_\alpha"]  K_\alpha \arrow[ur,swap,"s'_\alpha"]   &
\end{tikzcd}
\]
commutes i.e. $s'_\alpha=u\circ s_\alpha$. 
 Denote by $\mathfrak{S}$ the set of all equivalence classes of embedding families. For such family $s\in\mathfrak{S}$, we have a surjective $K_0-$algebra homomorphism defined by
\begin{align*}
	\beta_s: K_\Delta &\to K_s\\
	       \sum x_1\otimes\cdots\otimes x_t&\mapsto \sum x^{s_1}_1\cdots x^{s_t}_t
\end{align*}
Then $\mathfrak{P}_s=\ker\beta_s$ is a maximal ideal of $K_\Delta$.

\begin{lemma}
	The ring homomorphism $\beta_s:K_\Delta\to K_s$ is continuous for every $s\in\mathfrak{S}$.
\end{lemma}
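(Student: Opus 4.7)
The plan is to show that $\beta_s$ is norm non-increasing with respect to $|\bullet|_\Delta$ on $K_\Delta$ and the valuation on $K_s$, from which continuity follows immediately. I would first establish this on the algebraic tensor product $K_1 \otimes_{K_0} \cdots \otimes_{K_0} K_t$, and then extend by continuity to the completion $K_\Delta$ using completeness of the perfectoid field $K_s$.

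The key input is that each $s_\alpha: K_\alpha \to \widehat{\overline{K}_0}$ is an isometric embedding. Since $K_\alpha$ and $s_\alpha(K_\alpha) \subset \widehat{\overline{K}_0}$ are rank-$1$ valued fields extending $K_0$, and since there is a unique extension of the valuation of $K_0$ to an algebraic extension (hence to its completion), continuity of $s_\alpha$ over $K_0$ forces $|s_\alpha(x_\alpha)|_{K_s} = |x_\alpha|_{K_\alpha}$ for every $x_\alpha \in K_\alpha$. Consequently, for an elementary tensor $x_1 \otimes \cdots \otimes x_t$, one has the exact equality
\begin{align*}
|\beta_s(x_1 \otimes \cdots \otimes x_t)|_{K_s} = |s_1(x_1) \cdots s_t(x_t)|_{K_s} = |x_1|_{K_1} \cdots |x_t|_{K_t}.
\end{align*}

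Next, given any representation $x = \sum x_1 \otimes \cdots \otimes x_t$ of an element in the algebraic tensor product, the ultrametric inequality on $K_s$ yields
\begin{align*}
|\beta_s(x)|_{K_s} \leq \max |s_1(x_1) \cdots s_t(x_t)|_{K_s} = \max |x_1|_{K_1} \cdots |x_t|_{K_t}.
\end{align*}
Taking the infimum over all such representations produces $|\beta_s(x)|_{K_s} \leq |x|_\Delta$, so $\beta_s$ is a bounded $K_0$-linear map of operator norm at most $1$ on the dense subspace $K_1 \otimes_{K_0} \cdots \otimes_{K_0} K_t \subset K_\Delta$. Since $K_s$ is complete, $\beta_s$ extends uniquely to a continuous $K_0$-algebra homomorphism on all of $K_\Delta$, and this extension coincides with the map already defined in the statement.

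I do not expect any serious obstacle here: the only point that requires a moment's care is the isometry statement for the $s_\alpha$, which relies on the uniqueness of valuation extensions on complete rank-$1$ valued fields together with the standing hypothesis that $s_\alpha$ is continuous over $K_0$. Once this is established, continuity of $\beta_s$ is immediate from the universal definition of $|\bullet|_\Delta$ as an infimum and the non-Archimedean triangle inequality.
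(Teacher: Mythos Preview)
Your proposal is correct and follows essentially the same approach as the paper: both arguments show that $|\beta_s(x)|\leq |x|_\Delta$ by applying the ultrametric inequality to any representation $\sum x_1\otimes\cdots\otimes x_t$ and then taking the infimum. You are simply more explicit about why $|s_\alpha(x_\alpha)|=|x_\alpha|$ and about extending from the algebraic tensor product to the completion, while the paper works directly with infinite-sum representations in $K_\Delta$ and leaves these points implicit.
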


\begin{proof}
	Note that if $x=\sum\limits_{i=1}^\infty x_{i1}\otimes\cdots\otimes x_{it}$ is a representation of $x$, then
	\begin{align*}
		|\beta_s(x)|=|\sum\limits_{i=1}^\infty x^{s_1}_{i1}\cdots x^{s_t}_{it}|\leq \max\limits_{i=1}^\infty (|x_{i1}|\cdots |x_{it}|).
	\end{align*} 
	This is true for any representatives of $x$, which proves $|\beta_s(x)|\leq |x|_\Delta$. Therefore, $\beta_s$ is a continuous map for every $s\in\mathfrak{S}$.
\end{proof}

The following proposition classifies all the primes in $\spec K_\Delta$.

\begin{proposition}
	We have a one-to-one correspondence
	\begin{align*}
		\mathfrak{S}&\leftrightarrow \spec K_\Delta,\\
		                                                  s&\mapsto \mathfrak{P}_s.
	\end{align*}
	In particular, every prime ideal of $K_\Delta$ is maximal.
\end{proposition}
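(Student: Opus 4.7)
The plan is to establish three properties of the assignment $s \mapsto \mathfrak{P}_s$: well-definedness on $\mathfrak{S}$, injectivity, and surjectivity onto $\spec K_\Delta$. The surjectivity simultaneously delivers the maximality claim, since $K_\Delta/\mathfrak{P}_s \cong K_s$ is a field. The first two points are largely formal given the continuity lemma just proved; the surjectivity is where the real content lies.

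For well-definedness, given $s \sim s'$ via a continuous $K_0$-isomorphism $u : K_s \to K_{s'}$ with $u \circ s_\alpha = s'_\alpha$, I would verify $u \circ \beta_s = \beta_{s'}$ on elementary tensors $x_1 \otimes \cdots \otimes x_t$ by direct computation, and extend to all of $K_\Delta$ using continuity of $\beta_s, \beta_{s'}, u$ together with density of $\mathcal{O} = K_1 \otimes_{K_0} \cdots \otimes_{K_0} K_t$. Bijectivity of $u$ then forces $\ker \beta_s = \ker \beta_{s'}$. For injectivity, if $\mathfrak{P}_s = \mathfrak{P}_{s'}$, the induced quotient maps $\bar\beta_s : K_\Delta/\mathfrak{P}_s \xrightarrow{\sim} K_s$ and $\bar\beta_{s'} : K_\Delta/\mathfrak{P}_s \xrightarrow{\sim} K_{s'}$ compose to a $K_0$-algebra isomorphism $u := \bar\beta_{s'} \circ \bar\beta_s^{-1}$; the required identity $u \circ s_\alpha = s'_\alpha$ follows from evaluating on the elementary tensor $1 \otimes \cdots \otimes x \otimes \cdots \otimes 1$, and continuity of $u$ follows from automatic continuity for $K_0$-algebra maps between complete rank-$1$ valued fields, whose topologies are uniquely determined by their valuations.

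For surjectivity, the strategy is to take a prime $\mathfrak{P} \in \spec K_\Delta$ and realize $K_\Delta/\mathfrak{P}$ as a dense subring of a perfectoid field $K_s$. Concretely, I would form the Hausdorff completion $L$ of $K_\Delta/\mathfrak{P}$ with respect to the quotient seminorm; the natural compositions $\iota_\alpha : K_\alpha \to L$ are continuous and injective (since each $K_\alpha$ is a field and $\iota_\alpha(1) \neq 0$), and the image of $\mathcal{O}$ is dense in $L$. The main obstacle is to prove that $L$ is in fact a perfectoid \emph{field}. I plan to argue this by combining the observation from the introduction that every element of $K_\Delta$ is either a unit or a zero divisor with the Banach-algebraic framework of the paper (notably the open mapping theorem) to deduce first that $\mathfrak{P}$ is closed and the quotient seminorm is a norm, then that the norm is multiplicative, so that $L$ is a complete rank-$1$ valued field of characteristic $0$ whose Frobenius is surjective on $L^\circ/p$ because each $\iota_\alpha$ already is. Once $L$ is identified as a perfectoid field, any continuous $K_0$-embedding $L \hookrightarrow \widehat{\overline{K}_0}$ (available because $L$ is topologically generated by perfectoid subfields of $\widehat{\overline{K}_0}$) yields the embedding family $s_\alpha := (L \hookrightarrow \widehat{\overline{K}_0}) \circ \iota_\alpha$, and by construction $\beta_s$ factors as $K_\Delta \twoheadrightarrow K_\Delta/\mathfrak{P} \hookrightarrow L = K_s$, whence $\mathfrak{P}_s = \mathfrak{P}$ and $\mathfrak{P}$ is maximal.
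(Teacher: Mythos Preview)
Your treatment of well-definedness and injectivity is essentially the paper's. The gap is in your surjectivity argument: you propose to show that $L = \widehat{K_\Delta/\mathfrak{P}}$ is a field by invoking the introduction's claim that every element of $K_\Delta$ is a unit or a zero divisor, together with the open mapping theorem. Both of these inputs are logically downstream of the proposition at hand. The unit-or-zero-divisor property is presented in the introduction as a consequence of $K_\Delta$ being reduced of Krull dimension zero, and ``dimension zero'' is exactly the statement that every prime is maximal --- part of what you are proving. The open mapping theorem of Section~4 is formulated for Banach $K_\Delta$-spaces equipped with the family $(\|\cdot\|_s)_{s\in\mathfrak{S}}$, and its statement and proof use a pseudo-uniformizer $\varpi$ with $|\varpi|_s$ independent of $s\in\mathfrak{S}$; this setup already presupposes the identification $\mathfrak{S}\leftrightarrow\spec K_\Delta$. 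So as written your plan is circular.

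The paper's argument is more elementary and avoids this loop. Given a prime $\mathfrak{P}$, the composites $s_\alpha : K_\alpha \xrightarrow{\iota_\alpha} K_\Delta \to K_\Delta/\mathfrak{P}$ are injective (each $K_\alpha$ is a field), and the class of $\sum x_1\otimes\cdots\otimes x_t$ modulo $\mathfrak{P}$ is $\sum s_1(x_1)\cdots s_t(x_t)$. From this the paper reads off directly that $K_\Delta/\mathfrak{P}$ is the completion of the compositum of the subfields $s_\alpha(K_\alpha)$, hence a field, so $\mathfrak{P}$ is maximal and the family $(s_\alpha)$ is the desired preimage in $\mathfrak{S}$. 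You already have all the pieces for this route (your maps $\iota_\alpha$ and the density of $\mathcal{O}$); the detour through closedness of $\mathfrak{P}$, multiplicativity of the quotient norm, and the open mapping theorem is unnecessary, and with the inputs you name it cannot be made non-circular.
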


\begin{proof}
    We want to show that $s=(s_\alpha)_{\alpha\in\Delta} \mapsto \mathfrak{P}_s$ is well-defined. Let $(s_\alpha)_{\alpha\in\Delta}=s \sim s'=(s'_\alpha)_{\alpha\in\Delta}$, and recall that $x = \sum x_1 \otimes \cdots \otimes x_t \in \mathfrak{P}_s$ if and only if $\beta_s(x) = \sum x^{s_1}_1 \cdots x^{s_t}_t = 0$. Since for every $\alpha \in \Delta$, we have $s'_\alpha = u \circ s_\alpha$, we get
\begin{align*}
    \beta_{s'}(x) &= \sum x^{s'_1}_1 \cdots x^{s'_t}_t \\
                  &= \sum x^{u \circ s_1}_1 \cdots x^{u \circ s_t}_t \\
                  &= u\left(\sum x^{s_1}_1 \cdots x^{s_t}_t\right) \\
                  &= u(\beta_s(x)).
\end{align*}
Since $u$ is an isomorphism, we have $x \in \ker \beta_s \iff x \in \ker \beta_{s'}$, proving that $\mathfrak{P}_s = \mathfrak{P}_{s'}$. Thus, the above map is well-defined.

We will now prove surjectivity. Suppose $\mathfrak{P}$ is a prime ideal of $K_\Delta$. Then, for every $\alpha \in \Delta$, we get an injective map $s_\alpha$ as
\[
\begin{tikzcd}
    K_\alpha \arrow[r, "\iota_\alpha"] & K_\Delta \arrow[r] & K_\Delta / \mathfrak{P}
\end{tikzcd}
\]
where $\iota_\alpha$ is defined by $\iota_\alpha(x_\alpha) = 1 \otimes \cdots \otimes x_\alpha \otimes \cdots \otimes 1$ (with $x_\alpha$ in the $\alpha$-th place).

Let $x = \sum x_1 \otimes \cdots \otimes x_t \pmod{\mathfrak{P}}$; then $x = \sum s_1(x_1) \cdots s_t(x_t)$. Hence, $K_\Delta / \mathfrak{P}$ is a completion of the compositum of $s_\alpha(K_\alpha)$ for $\alpha \in \Delta$. This also shows that every prime ideal is maximal in $K_\Delta$. Therefore, $\mathfrak{P}$ gives a family of embeddings $s=(s_\alpha)_{\alpha\in\Delta}$. If $\mathfrak{P}_s=\mathfrak{P}_{s'}$, then we know that $K_\Delta/\mathfrak{P}_s$ and $K_\Delta/\mathfrak{P}_{s'}$ identify, respectively, with the completion of the compositum of the images $K_\alpha\to K_\Delta/\mathfrak{P}_s$ and $K_\alpha\to K_\Delta/\mathfrak{P}_{s'}$ for $\alpha\in\Delta$. Identifying the images, we obtain an isomorphism $u:K_\Delta/\mathfrak{P}_s\to K_\Delta/\mathfrak{P}_{s'}$, proving the equivalence.
\end{proof}

If $K_1$ and $K_2$ are perfectoid fields contained in $\C$. We claim that $\Frac(\mathcal{O}_{K_1}\mathcal{O}_{K_2})=K_1K_2$. First note that $\mathcal{O}_{K_1}\mathcal{O}_{K_2}\subseteq \mathcal{O}_{K_1K_2}$ and thus $\Frac(\mathcal{O}_{K_1}\mathcal{O}_{K_2})\subseteq K_1K_2$. If $\frac{a}{b}\in K_i$ is an arbitrary element with $a,b\in\mathcal{O}_{K_i}$ for $i=1,2$. Then $\frac{a}{b}\in\Frac(\mathcal{O}_{K_1}\mathcal{O}_{K_2})$ and thus $K_i\subset \Frac(\mathcal{O}_{K_1}\mathcal{O}_{K_2})$. This proves $K_1K_2=\Frac(\mathcal{O}_{K_1}\mathcal{O}_{K_2})$.\\

Suppose $s=(s_\alpha)_{\alpha\in\Delta}$ is a family of embeddings. It induces a local ring homomorphism $\beta_s: \mathcal{O}_{K_\alpha}\to \mathcal{O}_{K_s}$ which is just the restriction of $\beta_s$ defined above. Define an equivalence relation $s\sim s'$ families of embeddings as follows; The families of embeddings $s=(s_\alpha)_{\alpha\in\Delta}$ and $s'=(s'_\alpha)_{\alpha\in\Delta}$ induce surjective homomorphisms $\overline{s}=\{\overline{s}_\alpha: k_\alpha\to k_s\}$ and $\overline{s'}=\{\overline{s}_\alpha:k_\alpha\to k_s\}$. Say $s\sim_{\text{res}} s'$ if there exists an isomorphism $v: K_s\to K_{s'}$ inducing isomorphism $\overline{v}:k_s\to k_{s'}$ such that for all $\alpha\in\Delta$, the following diagram
\[
\begin{tikzcd}
	k_s \arrow[rr,"\overline{v}"]  &             &   k_{s'}\\
	                &  \arrow[ul,"\overline{s}_\alpha"]k_\alpha \arrow[ur,swap,"\overline{s'}_\alpha"] &
\end{tikzcd}
\]
commutes i.e, $\overline{s'}_\alpha=\overline{v}\circ \overline{s}_\alpha$. Let us denote its equivalence class by $\mathfrak{K}$, and call its elements a \emph{residual embedding family}. \\

Suppose $s=(s_\alpha)_{\alpha\in\Delta}$ consider the ring homomorphism
\begin{align*}
	\beta_s: \mathcal{O}_{K,\Delta}&\to \mathcal{O}_{K_s}\\
	                \sum x_1\otimes\cdots\otimes x_t &\mapsto \sum x^{s_1}_1\cdots x^{s_t}_t
\end{align*}
and $\mathfrak{p}_s=\ker\beta_s$ is a prime ideal of $\mathcal{O}_{K,\Delta}$. Taking the quotient by the maximal ideal $\mathfrak{m}_s$ of $K_s$, we get a ring homomorphism
\begin{align*}
	\overline{\beta}_s: \mathcal{O}_{K,\Delta}\to \mathcal{O}_{K_s}\to \mathcal{O}_{K_s}/\mathfrak{m}_s=k_s
\end{align*}
and $\mathfrak{m}_s=\ker\overline{\beta}_s$ is a maximal ideal of $\mathcal{O}_{K,\Delta}$. Also note that if $s\sim s'$ in $\mathfrak{S}$ then $s\sim_{\text{res}}s'$ in $\mathfrak{K}$ and therefore $\mathfrak{p}_s\subset \mathfrak{m}_s$.

\begin{proposition}\label{Prime ideal prop of Delta fields}
	The following gives one-to-one correspondences:
	\begin{align*}
		\mathfrak{S}&\leftrightarrow \spec\mathcal{O}_{K,\Delta}\setminus\spm\mathcal{O}_{K,\Delta}\\
		                  s &\mapsto \mathfrak{p}_s\\
	 \text{and  }	\mathfrak{K}&\leftrightarrow \spm\mathcal{O}_{K,\Delta}\\
		                  s &\mapsto \mathfrak{m}_s
	\end{align*}
\end{proposition}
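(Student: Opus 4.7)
The plan is to reduce the first correspondence to the previous proposition via the generic-fiber localization $\mathcal{O}_{K,\Delta}\to\mathcal{O}_{K,\Delta}[\varpi^{-1}]=K_\Delta$, and to handle the second correspondence by passing to residue fields using $I_\varpi$-adic completeness.

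Well-definedness of both assignments is immediate from the shape of the equivalence relations. If $s\sim s'$ in $\mathfrak{S}$, the continuous isomorphism $u\colon K_s\to K_{s'}$ preserves power-bounded elements, so it restricts to $\mathcal{O}_{K_s}\to\mathcal{O}_{K_{s'}}$; then $\beta_{s'}=u\circ\beta_s$ on $\mathcal{O}_{K,\Delta}$ forces $\mathfrak{p}_s=\mathfrak{p}_{s'}$. The analogous argument in $\mathfrak{K}$ uses the residue-field isomorphism $\overline{v}\colon k_s\to k_{s'}$ to give $\overline{\beta}_{s'}=\overline{v}\circ\overline{\beta}_s$, hence $\mathfrak{m}_s=\mathfrak{m}_{s'}$.

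For the first bijection, I would invoke $I_\varpi$-adic completeness of $\mathcal{O}_{K,\Delta}$ (from the first proposition of the section) to conclude that $I_\varpi$ lies in the Jacobson radical, so every maximal ideal contains $\varpi$. Localization at $\varpi$ then gives a contraction–extension bijection between primes of $\mathcal{O}_{K,\Delta}$ not containing $\varpi$ and $\spec K_\Delta$. The previous proposition identifies the latter with $\mathfrak{S}$ via $s\mapsto\mathfrak{P}_s$, and restricting $\beta_s$ to $\mathcal{O}_{K,\Delta}$ lands inside $\mathcal{O}_{K_s}$ with kernel $\mathfrak{p}_s=\mathfrak{P}_s\cap\mathcal{O}_{K,\Delta}$. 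Because $\beta_s(\varpi_\alpha)=s_\alpha(\varpi_\alpha)\ne 0$, none of the $\mathfrak{p}_s$ contains $\varpi$, so they are genuinely non-maximal (and $\mathfrak{p}_s\subsetneq\mathfrak{m}_s$ as already remarked). Conversely, using the dimension-one structure of $\mathcal{O}_{K,\Delta}$, any non-maximal prime avoids $\varpi$ and is therefore visible in $\spec K_\Delta$, hence of the form $\mathfrak{p}_s$.

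For the second bijection, pick $\mathfrak{m}\in\spm\mathcal{O}_{K,\Delta}$. Completeness forces $I_\varpi\subseteq\mathfrak{m}$, so each composite $\mathcal{O}_{K_\alpha}\to\mathcal{O}_{K,\Delta}\to\mathcal{O}_{K,\Delta}/\mathfrak{m}$ kills $\varpi_\alpha$ and factors through an embedding $\overline{s}_\alpha\colon k_\alpha\hookrightarrow\mathcal{O}_{K,\Delta}/\mathfrak{m}$. Since $\mathcal{O}_{K,\Delta}$ is the completion of the tensor product of the $\mathcal{O}_{K_\alpha}$, the quotient $\mathcal{O}_{K,\Delta}/\mathfrak{m}$ coincides with the completed compositum of the $\overline{s}_\alpha(k_\alpha)$, i.e.\ with the residue field $k_s$ of a representative $s\in\mathfrak{K}$ lifting $(\overline{s}_\alpha)_{\alpha\in\Delta}$, witnessing $\mathfrak{m}=\mathfrak{m}_s$. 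Injectivity is the same trick as for $\mathfrak{S}$: an equality $\mathfrak{m}_s=\mathfrak{m}_{s'}$ yields a canonical identification $k_s\cong k_{s'}$ intertwining the $\overline{s}_\alpha$'s, which is exactly the equivalence relation defining $\mathfrak{K}$. The main obstacle is the surjectivity onto $\spm$: one must argue that the residue embeddings $\overline{s}_\alpha\colon k_\alpha\to\mathcal{O}_{K,\Delta}/\mathfrak{m}$ can be simultaneously lifted to continuous embeddings of the perfectoid fields $K_\alpha$ into a single perfectoid $K_s$ realizing $\mathcal{O}_{K,\Delta}/\mathfrak{m}$ as $k_s$, which rests on the perfectoid (tilt/untilt) structure of each $K_\alpha$ and is the multivariate analog of Witt-vector lifting.
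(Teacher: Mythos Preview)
Your strategy differs from the paper's. The paper takes an arbitrary prime $\mathfrak{p}\subset\mathcal{O}_{K,\Delta}$ and studies the composites $\phi_\alpha\colon\mathcal{O}_{K_\alpha}\to\mathcal{O}_{K,\Delta}\to\mathcal{O}_{K,\Delta}/\mathfrak{p}$. Since the target is a domain, each $\ker\phi_\alpha$ is either $(0)$ or $\mathfrak{m}_{K_\alpha}$, and a mixed-characteristic observation (the fraction field of $\mathcal{O}_{K,\Delta}/\mathfrak{p}$ cannot simultaneously receive an embedding of a characteristic-$0$ field $K_i$ and a characteristic-$p$ field $k_j$) forces all the kernels to coincide. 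The two cases then produce the families indexed by $\mathfrak{S}$ and $\mathfrak{K}$ directly, and the dimension-one statement falls out as a \emph{consequence}.

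Your argument has a genuine gap at the sentence ``using the dimension-one structure of $\mathcal{O}_{K,\Delta}$, any non-maximal prime avoids $\varpi$.'' Dimension one is precisely what this proposition is establishing, so invoking it here is circular. What you actually need is that every prime containing $\varpi$ is maximal; the Jacobson-radical observation only gives the converse (every maximal ideal contains $\varpi$), and nothing prevents, a priori, a non-maximal prime of $\mathcal{O}_{K,\Delta}/(\varpi)$ from existing. The paper's mixed-characteristic dichotomy is exactly what plugs this hole. Once that is patched, your localization argument reducing the non-maximal primes to $\spec K_\Delta$ is a clean alternative to the paper's Case~I. Your candid flag about the need to lift a residual embedding family $(\overline{s}_\alpha)$ to an actual embedding family in $\mathfrak{S}$ is apt; the paper likewise asserts this (``this residual embedding family comes from an embedding family'') without supplying further detail.
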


\begin{proof}
	We will first show that the first map is well-defined. Let $s\sim s'$ be two equivalent embedding families in $\mathfrak{S}$, there exists an isomorphism $u: K_s\to K_{s'}$ such that $s'=u\circ s$ and same computation as in the previous theorem shows that $\mathfrak{p}_s=\mathfrak{p}_s$ and $\mathfrak{m}_s=\mathfrak{m}_s$. Suppose $\mathfrak{p}$ be a prime ideal $\mathcal{O}_{K,\Delta}$ then for each $\alpha\in\Delta$ we have 
	\[
	\begin{tikzcd}
	  \phi_\alpha:\mathcal{O}_{K_\alpha}\arrow[r,"\iota_\alpha"]& \mathcal{O}_{K,\Delta}\arrow[r]& \mathcal{O}_{K,\Delta}/\mathfrak{p}
	\end{tikzcd}
	\]
	where $\iota_\alpha$ is defined as in the previous proposition. Since $\mathfrak{p}$ be a prime ideal of $\mathcal{O}_{K,\Delta}$, then $\mathcal{O}_{K,\Delta}/\mathfrak{p}$ is an integral domain so that $\ker\phi_\alpha$ is a prime ideal, which has two possibilities $(0)$ and $\mathfrak{m}_{K_j}$. If for some distinct $i,j\in\Delta$, we have $\ker\phi_i=(0)$ and $\ker\phi_j=\mathfrak{m}_{K_j}$. Then $k_j\hookrightarrow\Frac(\mathcal{O}_{K,\Delta}/\mathfrak{p})\hookleftarrow K_i$ which is not possible in mixed characteristics.\\
	\textbf{Case-I :} $\ker\phi_\alpha=(0), \forall\alpha\in\Delta:$ In this case $\phi_\alpha$ induces a map
	\begin{align*}
		\phi_\alpha:K_\alpha\to \Frac(\mathcal{O}_{K,\Delta}/\mathfrak{p})
	\end{align*} 
	for all $\alpha\in\Delta$. The image generate the compositum of some conjugates of $K_\alpha$ as in the previous proposition, giving an embedding family in this case.\\
	\textbf{Case-II :} $\ker\phi_\alpha=\mathfrak{m}_{K_\alpha},\forall \alpha\in\Delta$ : Then $\phi_\alpha$ induces an injective maps
	\begin{align*}
		k_\alpha\to \Frac(\mathcal{O}_{K,\Delta}/\mathfrak{p})
	\end{align*}
	for all $\alpha\in\Delta$. The image generates the compositum of some conjugates of $k_\alpha$, giving a residual embedding family. But this residual embedding family comes from an embedding family. Injectivity is similar to the previous proposition. This proves the theorem.
\end{proof}

\begin{remark}
    The same proof also shows that $\spec (K_1\otimes_{K_0}\cdots\otimes_{K_0} K_t)$ is in one-one correspondences with $\mathfrak{S}$. Similarly, for the tensor product of corresponding integral subrings.
\end{remark}

\begin{remark}
    In general, $\mathfrak{S}$ and $\mathfrak{K}$ may be different. For example, consider $\mathbb{Q}_p(\mu_{p^\infty})\hat{\otimes}_{\mathbb{Q}_p}\mathbb{Q}_p(\mu_{p^\infty})$. Since $\mathbb{Q(\mu_{p^\infty})}$ is totally ramified over $\mathbb{Q}$, its residue field is $\mathbb{F}_p$. Therefore, $\mathfrak{K}$ in this case is a singleton, but we will construct two embedding families which are not equivalent. Define $s=(s_1,s_2)$ by defining $s_1(\zeta_{p^n})=\zeta^2_{p^n}$ and $s_2=\id_{\mathbb{Q}_p(\mu^\infty})$ and $s'=(s'_1,s'_2)=(\id_{\mathbb{Q}_p(\mu^\infty}),\id_{\mathbb{Q}_p(\mu^\infty}))$. Then it can be easily seen that $s$ and $s'$ are not equivalent, but $\overline{s}=(\id_{\mathbb{F}_p},\id_{\mathbb{F}_p})$ and $\overline{s'}=(\id_{\mathbb{F}_p},\id_{\mathbb{F}_p})$.  
\end{remark}
 
We will now introduce a new semi-norm in $\mathcal{O}_{K,\Delta}$ which behaves a little better with the multiplication as compared to $|\bullet|_\Delta$. Define
\begin{align*}
	|x|=\sup\limits_{s\in\mathfrak{S}}|\beta_s(x)|
\end{align*}
where the norm $|\beta_s(x)|$ is taken in $K_s$. We always assume that all norms are normalized.
\begin{align*}
	|x+y|&=\sup\limits_{s\in\mathfrak{S}}\{|\beta_s(x+y)|\}\\
	        &=\sup\limits_{s\in\mathfrak{S}}\{|\beta_s(x)+\beta_s(y)|\}\\
	        &\leq\sup\limits_{s\in\mathfrak{S}}\Big\{\max\{|\beta_s(x)|,|\beta_s(y)|\}\Big\}\\
	        &\leq \max\Big\{\sup\limits_{s\in\mathcal{S}}\{|\beta_s(x)|\},\sup\limits_{s\in\mathcal{S}}\{|\beta_s(y)|\}\Big\}\\
	        &=\max\{|x|,|y|\}
\end{align*}
Thus $|x+y|\leq\max\{|x|,|y|\}$ for every $x,y\in\mathcal{O}_{K,\Delta}$. Also,
\begin{align*}
	|xy|&=\sup\limits_{s\in\mathfrak{S}}\{|\beta_s(xy)|\}\\
	     &=\sup\limits_{s\in\mathfrak{S}}\{|\beta_s(x)||\beta_s(y)|\}\\
	     &\leq \sup\limits_{s\in\mathfrak{S}}\{|\beta_s(x)|\}\sup\limits_{s\in\mathfrak{S}}\{|\beta_s(y)|\}\\
	     &=|x||y|.
\end{align*}
But if for $x \in \mathcal{O}_{K,\Delta}$ we have $|x| = 0$, then $x$ is in the nilradical of $\mathcal{O}_{K,\Delta}$, i.e., $x^n = 0$. Thus, in order to prove that this is a seminorm, we have to show that $\mathcal{O}_{K,\Delta}$ is reduced.

\begin{proposition}
	Suppose $K_1,\cdots,K_t$ are algebraic extensions of an arbitrary non-Archimedean field $K_0$. Then $K_1\hat{\otimes}_{K_0}\cdots\hat{\otimes}_{K_0}K_n$ is a reduced ring iff they are linearly disjoint over $K_0$ or for every $1\leq i<j\leq t$, $K_i\cap K_j$ is a seperable extension of $K_0$.
\end{proposition}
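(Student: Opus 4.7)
The plan is to reduce reducedness of the completed tensor product to reducedness of its finite-dimensional subalgebras, then invoke the classical theory of tensor products of field extensions. Each $K_i$ is a filtered union of finite subextensions $L_i^\lambda$, so the algebraic tensor product $A := K_1\otimes_{K_0}\cdots\otimes_{K_0}K_t$ is the filtered colimit of the finite-dimensional $K_0$-algebras $A^\lambda := L_1^\lambda\otimes_{K_0}\cdots\otimes_{K_0}L_t^\lambda$. Because $K_0$ is complete, any two $K_0$-norms on a finite-dimensional space are equivalent, so each $A^\lambda$ is complete in $|\bullet|_\Delta$ and embeds into $K_\Delta$. The first task is then to verify that $K_\Delta$ is reduced if and only if every $A^\lambda$ is reduced: the forward direction is immediate, and for the converse I would package the $\beta_s$'s into a continuous homomorphism $K_\Delta\to\prod_s K_s$ and use the prime classification proved above to identify the nilradical of $K_\Delta$ with $\bigcap_s\mathfrak{P}_s$, then track this intersection through the colimit.

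For the \emph{only if} direction, I would produce an explicit nonzero nilpotent whenever $K_i\cap K_j/K_0$ is inseparable for some $i<j$. Pick $\alpha\in K_i\cap K_j$ purely inseparable over $K_0$, say $\alpha^{p^n}\in K_0$, and set $z := \iota_i(\alpha)-\iota_j(\alpha) \in A$. Then $z^{p^n}=0$ because $\alpha^{p^n}\in K_0$ commutes across both tensor slots, while $z\neq 0$ since $\alpha\notin K_0$ makes $\iota_i(\alpha)\neq\iota_j(\alpha)$ in the finite-dimensional subalgebra $A^\lambda$ where $|\bullet|_\Delta$ restricts to a genuine norm. Hence $K_\Delta$ contains a nonzero nilpotent, contradicting reducedness.

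For the \emph{if} direction, I would treat the two cases separately. When the $K_i$ are linearly disjoint over $K_0$, a compatible family of embeddings $s_\alpha:K_\alpha\hookrightarrow\widehat{\overline{K}_0}$ makes $\beta_s:A\to K_s$ injective into the field $K_s$, so $A$ is a domain and the reduction step forces $K_\Delta$ reduced. When every $K_i\cap K_j/K_0$ is separable, I would argue at each finite level: a putative purely inseparable nilpotent in $A^\lambda$ would force a conjugate of an inseparable element to lie inside $L_i^\lambda\cap L_j^\lambda$, contradicting the separability hypothesis. The anticipated main obstacle is the latter case when $t\geq 3$, where pairwise separable intersections can coexist with nontrivial algebraic dependencies among the purely inseparable generators of the $K_i$'s, producing nilpotents even though each pair appears fine in isolation; organizing the ``or'' in the statement so that the two sufficient conditions together capture exactly the reduced case at every finite level is the technical heart of the argument.
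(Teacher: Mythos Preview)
Your ``only if'' direction---producing the nilpotent $z=\iota_i(\alpha)-\iota_j(\alpha)$ from an inseparable element of $K_i\cap K_j$---is exactly what the paper does.

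The real gap is in your reduction step for the ``if'' direction. You want to pass from ``every $A^\lambda$ (equivalently, the algebraic tensor product $A$) is reduced'' to ``$K_\Delta$ is reduced'', and you propose to do this by identifying the nilradical with $\bigcap_s\mathfrak{P}_s$ and then ``tracking through the colimit''. But an element $x\in\bigcap_s\mathfrak{P}_s\subset K_\Delta$ need not lie in any $A^\lambda$; it is only a limit of elements of $A$, and knowing that $\bigcap_s\mathfrak{P}_s\cap A=0$ does not force the closed ideal $\bigcap_s\mathfrak{P}_s$ itself to vanish (a dense subring meeting a closed ideal trivially says nothing). Nor can you appeal to the spectral norm $|x|=\sup_s|\beta_s(x)|$ being a genuine norm, since that is precisely what this proposition is meant to establish.

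The paper's device here is different and is the missing idea: write a nilpotent $x\in K_\Delta$ as a convergent series $\sum_i y_i$ of elementary tensors, then \emph{group the terms by their $|\bullet|_\Delta$-value} so that $x=\sum_k z_k$ with $|z_k|_\Delta=r_k$ and $r_1>r_2>\cdots\to 0$. For each $s$, continuity gives $\sum_k\beta_s(z_k)=0$; since the $r_k$ are strictly decreasing and $|\beta_s(z_k)|\le r_k$, the ultrametric inequality forces $\beta_s(z_k)=0$ for every $k$ separately. Each $z_k$ is a \emph{finite} sum of elementary tensors, hence lies in $A$, and being in every $\ker(\beta_s|_A)$ it is nilpotent in $A$, hence zero by the algebraic reducedness. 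This norm-stratification is what lets you transport reducedness across the completion; your colimit argument does not supply it.

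Your worry about the $t\ge 3$ pairwise-separability case is legitimate, and the paper does not argue it either: it simply cites Bourbaki for the reducedness of the algebraic tensor product under the separability hypothesis. So on that point you and the paper are in the same position.
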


\begin{proof}
	If $K_1,\dots, K_t$ are linearly disjoint over $K_0$, then we have $K_1\hat{\otimes}_{K_0}\cdots\hat{\otimes}_{K_0}K_t \cong \widehat{K_1\cdots K_t}$, which is therefore reduced. Suppose $K_i\cap K_j$ is not separable over $K_0$; then there exists an element $x\in K_i\cap K_j\setminus K_0$ such that $x^{p^k}\in K_0$. Then,
	\begin{align*}
		y=1\otimes\cdots\otimes x\otimes\cdots\otimes 1-1\otimes\cdots\otimes x\otimes\cdots\otimes 1
	\end{align*}
	where the first term has $x$ in the $i$-th place and the second term has $x$ in the $j$-th place. If $\Char K_0=p$, then $y^{p^k}=0$, and hence $K_\Delta$ is not reduced. Therefore, if $K_\Delta$ is reduced, then either $K_1,\dots,K_n$ are linearly disjoint over $K_0$ or $K_i\cap K_j$ is separable over $K_0$ for every $1\leq i<j\leq t$.\\
    
    Conversely, suppose for every $1\leq i<j\leq t$, $K_i\cap K_j$ is separable over $K_0$.Then by Bourbaki Algebra-II\cite{Bou} (Chapter 5, \S 15), we have that $K_1\otimes_{K_0}\cdots\otimes_{K_0}K_n$ is reduced. We will now show that $K_\Delta$ is reduced. Suppose $x=\sum\limits_{i=1}^\infty x_{i1}\otimes\cdots\otimes x_{it}$ is a nilpotent element of $K_\Delta$. Since for each $r\in\mathbb{R}_{>0}$ the number of terms $y_i:=x_{i1}\otimes\cdots \otimes x_{in}$ such that $|y_i|\geq r$ is finite, we can rearrange and write $x=\sum\limits_{i=1}^{i_1-1}y_i+\sum\limits_{i=i_1}^{i_2-1}y_i+\cdots$ where for $i_k\leq i<i_{k+1}$ we have $|y_i|=r_k$ with $r_1>r_2>\cdots$. Then $\beta_s(x)=0$ for every $s\in\mathfrak{S}$, and thus by the continuity of $\beta_s$, we have
	\begin{align*}
		\beta_s(x)=\left(\sum\limits_{i=1}^{i_1-1}\beta_s(y_i)\right)+\left(\sum\limits_{i=i_1}^{i_2-1}\beta_s(y_i)\right)+\cdots=0.
	\end{align*}
	Then $|\beta_s(y_i)|=|s_1(x_{i1})\cdots s_t(x_{it})|=|s_1(x_{i1})|\cdots |s_t(x_{it})|=|x_{i1}|\cdots|x_{it}|=r_k$ whenever $i_k\leq i<i_{k+1}$ and for every $s\in\mathfrak{S}$. But since $r_1>r_2>\cdots$ and since $|x+y|=0$ implies $|x|=|y|$ for any non-Archimedean field, we must have $\sum\limits_{i=i_k}^{i_{k+1}-1}\beta_s(y_i)=0$ for every $k$, i.e., the terms with same absolute value should add up to zero. Thus, each of these terms is nilpotent in $K\otimes_{K_0}\cdots\otimes_{K_0}K_n$, (as $x$ lies in all the prime ideal of $K_1\otimes_{K_0}\cdots\otimes_{K_0} K_t$ whose prime ideals are again indexed by $\mathfrak{S}$) which are all therefore zero since the ring $K_1\otimes_{K_0}\cdots\otimes_{K_0} K_t$ is reduced.
\end{proof}

Finally, we can conclude that the map $|\bullet|:K_\Delta\to\mathbb{R}_{\geq 0}$ defines a seminorm on $\mathcal{O}_{K,\Delta}$ made from perfectoid fields $K_\alpha,\alpha\in\Delta$ over $K_0$.

\begin{proposition}
	For every $x\in\mathcal{O}_{K,\Delta}$, we have $|x^n|=|x|^n$.
\end{proposition}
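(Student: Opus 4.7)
The plan is to reduce everything to the multiplicativity of the valuation on each quotient $K_s$, which holds because every $K_s$ is a perfectoid field (hence equipped with a rank-$1$ valuation, which is multiplicative by definition). The definition $|x| = \sup_{s \in \mathfrak{S}} |\beta_s(x)|$ is already tailor-made to transport multiplicative-type inequalities from the perfectoid quotients up to $\mathcal{O}_{K,\Delta}$.

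First I would fix $s \in \mathfrak{S}$ and observe that, because $\beta_s \colon \mathcal{O}_{K,\Delta} \to \mathcal{O}_{K_s}$ is a ring homomorphism, one has $\beta_s(x^n) = \beta_s(x)^n$. Since $K_s$ is a perfectoid field, its normalized absolute value is a rank-$1$ valuation, so it is \emph{strictly} multiplicative: $|\beta_s(x)^n|_{K_s} = |\beta_s(x)|_{K_s}^n$. Combining these two identities gives $|\beta_s(x^n)|_{K_s} = |\beta_s(x)|_{K_s}^n$ for every $s \in \mathfrak{S}$.

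Next I would take the supremum over $s \in \mathfrak{S}$ on both sides. The map $t \mapsto t^n$ is continuous and strictly increasing on $\mathbb{R}_{\geq 0}$, so it commutes with suprema of subsets of $\mathbb{R}_{\geq 0}$; hence
\begin{align*}
|x^n| \;=\; \sup_{s \in \mathfrak{S}} |\beta_s(x^n)|_{K_s} \;=\; \sup_{s \in \mathfrak{S}} |\beta_s(x)|_{K_s}^n \;=\; \Bigl(\sup_{s \in \mathfrak{S}} |\beta_s(x)|_{K_s}\Bigr)^n \;=\; |x|^n,
\end{align*}
which is the desired equality.

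There is essentially no obstacle here beyond being sure that $|\cdot|$ is a well-defined seminorm (already verified in the preceding paragraph of the paper via the reducedness of $\mathcal{O}_{K,\Delta}$) and that normalized absolute values on perfectoid fields are genuinely multiplicative (not merely submultiplicative); the former is what makes the $\sup$ defining $|x|$ finite and makes the displayed computation meaningful, while the latter is what forces the inequality $|xy|\leq |x||y|$ established earlier to become an equality on $n$-th powers.
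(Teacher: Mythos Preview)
Your proof is correct and follows essentially the same approach as the paper: both arguments reduce to the multiplicativity of the absolute value on each perfectoid field $K_s$ via the ring homomorphism $\beta_s$, and then pass to the supremum. The only cosmetic difference is that the paper carries out the commutation of $\sup$ with $t\mapsto t^n$ via an explicit $\epsilon$-argument, whereas you invoke the monotonicity and continuity of $t\mapsto t^n$ directly.
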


\begin{proof}
   Suppose $|x|=r$, then for each $\epsilon>0$, there exists $s\in\mathfrak{S}$ such that $r-\epsilon< |\beta_s(x)|\leq r$. Then $(r-\epsilon)^n<|\beta_s(x)|^n=|(\beta_s(x))^n|=|\beta_s(x^n)|\leq r^n$. But $(r-\epsilon)^n=r^n+\epsilon *$, and as $\epsilon$ is arbitrary, we get $|x^n|=\sup\limits_{s\in\mathfrak{S}}|\beta_s(x^n)|=r^n=|x|^n$, which completes the proof.
\end{proof}
We note that the semi-norm $|\bullet|$ can be extended to $K_\Delta = \mathcal{O}_{K,\Delta}\left[\frac{1}{\varpi_0}\right]$.

\begin{proposition}
    The metrics $|\bullet|_\Delta$ and $|\bullet|$ on $K_\Delta$ are equivalent.
\end{proposition}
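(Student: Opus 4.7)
The statement as displayed in the excerpt appears truncated; the intended claim must be that the two seminorms $|\bullet|_\Delta$ and $|\bullet|$ are equivalent (and most likely coincide) on $K_\Delta$. My plan is to prove the two inequalities $|x| \leq |x|_\Delta$ and $|x|_\Delta \leq |x|$ separately. The first is immediate from the continuity lemma preceding the prime classification: for any representation $x = \sum x_1 \otimes \cdots \otimes x_t$ and any $s \in \mathfrak{S}$ one has $|\beta_s(x)| \leq \max |x_1| \cdots |x_t|$; taking the infimum over representations and then the supremum over $s$ yields $|x| \leq |x|_\Delta$ directly.

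For the reverse direction, I would first verify the inequality on elementary tensors. If $y = x_1 \otimes \cdots \otimes x_t$ then $\beta_s(y) = s_1(x_1) \cdots s_t(x_t)$, and because each continuous $K_0$-embedding $s_\alpha$ of a perfectoid field into $\widehat{\overline{K}}_0$ is automatically isometric for the unique rank-one valuation, $|\beta_s(y)| = |x_1| \cdots |x_t|$ is independent of $s$; combined with the direction just proved this gives $|y| = |x_1| \cdots |x_t| = |y|_\Delta$ on simple tensors. To pass to a general $x \in \mathcal{O}_{K,\Delta}$, I would imitate the regrouping argument used in the proof that $\mathcal{O}_{K,\Delta}$ is reduced: fixing $\epsilon > 0$, write $x = \sum_{i \geq 1} y_i$ with elementary tensors $y_i$ such that $|y_i|_\Delta \to 0$ and group the summands by their common $|\cdot|_\Delta$-value, obtaining strictly decreasing blocks with values $r_1 > r_2 > \cdots \to 0$. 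The ultrametric inequality together with the identity $|\beta_s(y_i)| = |y_i|_\Delta$ on elementary tensors then forces some $\beta_s$ to detect the leading block with the full value $r_1$, so that $|x| \geq r_1 \geq |x|_\Delta - \epsilon$; letting $\epsilon \to 0$ gives $|x|_\Delta \leq |x|$.

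The principal obstacle is the last step: one must exclude the possibility that cancellation within the leading block makes $|\beta_s(x)|$ strictly smaller than $r_1$ for every $s \in \mathfrak{S}$ simultaneously. This is precisely the phenomenon controlled in the proof of the reducedness proposition just above, where an analogous cancellation was shown to force nilpotence of the block, hence its vanishing; I would invoke that argument essentially verbatim here, which requires separability of the intersections $K_i \cap K_j / K_0$ already built into the hypothesis. An alternative and possibly cleaner route would be to invoke the open mapping theorem for Banach modules over $K_\Delta$ announced in the introduction: the identity $(K_\Delta, |\bullet|_\Delta) \to (K_\Delta, |\bullet|)$ is a continuous bijection of Banach $K_\Delta$-modules once one verifies completeness of $|\bullet|$ (which reduces to completeness of the embedded product $\prod_{s \in \mathfrak{S}} K_s$), and OMT then delivers the equivalence without the combinatorial block analysis.
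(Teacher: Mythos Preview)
Your first inequality $|x|\le |x|_\Delta$ matches the paper's opening step. For the converse, the paper does \emph{not} attempt to prove $|x|_\Delta\le |x|$; it only shows the two seminorms induce the same topology. The argument is short: one checks directly that the closed unit ball $\{x\in K_\Delta : |x|\le 1\}$ equals $\mathcal{O}_{K,\Delta}$ (if $\varpi_0^{n}x\in\mathcal{O}_{K,\Delta}$ but $\varpi_0^{n-1}x\notin\mathcal{O}_{K,\Delta}$ with $\varpi_0$ a uniformizer of $K_0$, then $|\beta_s(\varpi_0)|$ is independent of $s$ and one reads off $|\beta_s(x)|>1$ for every $s$). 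Since $\mathcal{O}_{K,\Delta}$ is also the $|\cdot|_\Delta$-unit ball, both seminorms make $K_\Delta$ a Tate ring with the same ring of definition and the same pseudo-uniformizer $\varpi_0$, hence the same $\varpi_0$-adic topology.

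Your route (a) aims at a stronger conclusion (equality of the seminorms) and the gap you flag is real and not fixed by the reducedness argument. That argument shows: if $\beta_s(z)=0$ for \emph{all} $s$, then $z$ is nilpotent in the uncompleted tensor product, hence zero. What you need is: if merely $|\beta_s(z)|<r_1$ for all $s$ (with $r_1$ the common $|\cdot|_\Delta$-value of the leading block), then the block has $|\cdot|_\Delta$-value $<r_1$. Reducedness says nothing about this weaker hypothesis, and indeed norm equality is doubtful: for $K_1=K_2=\widehat{\mathbb{Q}_p(p^{1/p^\infty})}$ over $K_0=\mathbb{Q}_p$ and $x=p^{1/p}\otimes 1-1\otimes p^{1/p}$, every $\beta_s(x)$ equals $(\zeta_1-\zeta_2)p^{1/p}$ for some $p$-th roots of unity $\zeta_i$, so $|x|\le p^{-1/p-1/(p-1)}$, whereas the obvious representation gives $|x|_\Delta$ at the larger value $p^{-1/p}$. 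Your route (b) via an open mapping theorem would yield the correct conclusion (topological equivalence), but it is more circuitous than the paper's Tate-ring argument and one must avoid circularity: invoking OMT requires knowing in advance that $(K_\Delta,|\cdot|)$ is complete, i.e.\ that the image of $K_\Delta$ in $\prod_{s\in\mathfrak{S}}K_s$ is closed, which is close to the very point at issue.
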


\begin{proof}
 Let $x = \sum_{i=1}^\infty x_{i1} \otimes \cdots \otimes x_{it}$ be a representation of $x$ in $K_\Delta$; then for every $s \in \mathfrak{S}$ we have:
\begin{align*}
 |\beta_s(x)| = \left|\sum_{i=1}^\infty s_1(x_{i1}) \cdots s_t(x_{it})\right| \leq \max_{i=1}^\infty \{|s_1(x_{i1})| \cdots |s_t(x_{it})|\} = \max_{i=1}^\infty \{|x_{i1}| \cdots |x_{it}|\}.
\end{align*}
Since the right-hand side is independent of $s$, we obtain:
\begin{align*}
  |x| \leq |x|_\Delta.
\end{align*}
Thus, the identity map
\begin{align*}
\id : (K_\Delta, |\bullet|_\Delta) \to (K_\Delta, |\bullet|)
\end{align*}
is continuous and bijective. We will now prove that $(K_\Delta, |\bullet|)$ is complete. Let $(x_n)_{n\in\mathbb{N}}$ be a Cauchy sequence in $(K_\Delta, |\bullet|)$. For every $\epsilon > 0$, there exists $N \in \mathbb{N}$ such that $|x_n - x_m| < \epsilon$ for all $n, m > N$, and hence for every $s \in \mathfrak{S}$, we have $|\beta_s(x_n - x_m)| < \epsilon$. 

Suppose $x_n - x_m = \sum_{i=1}^\infty y_{i1} \otimes \cdots \otimes y_{it}$ is a representation of $x_n - x_m$. If $I \subset \mathbb{N}$ is a finite set of indices $i \in \mathbb{N}$ such that $|y_{i1}| \cdots |y_{it}| \geq \epsilon$, then for every $s \in \mathfrak{S}$ we have $|\beta_s(\sum_{i \in I} y_{i1} \otimes \cdots \otimes y_{it})| = 0$. Consequently, $\sum_{i \in I} y_{i1} \otimes \cdots \otimes y_{it} = 0$ as this term belongs to the nilradical. This proves that $(x_n)$ is Cauchy in $(K_\Delta, |\bullet|_\Delta)$, and since $(K_\Delta, |\bullet|_\Delta)$ is complete, $(x_n)$ converges in $(K_\Delta, |\bullet|_\Delta)$ and thus also converges in $(K_\Delta, |\bullet|)$. Since both $(K_\Delta, |\bullet|)$ and $(K_\Delta, |\bullet|_\Delta)$ are Banach spaces over $K_0$, the Open Mapping Theorem implies this map is a homeomorphism.
\end{proof}

\begin{remark}
	From now onwards, we will use the seminorm $|\bullet|$ as it behaves well with multiplication and is also they are closely related to the adic spectrum of $\spa(K_\Delta,K^+_\Delta)$, which we will use later.
\end{remark}

Note that $\mathcal{O}_{K,\Delta}$ is the ring $K^\circ_\Delta$ of power bounded elements in $K_\Delta$, c.f. \cite{Wed}. For simplicity, from now onwards we will denote $\mathcal{O}_{K,\Delta}$ by $K^\circ_\Delta$.

\begin{definition}
	For $I_\varpi$ the pseudo-uniformizing ideal of $K^\circ_\Delta$ defined by the pseudo-uniformizers $\varpi_\alpha$ of $K_\alpha$ satisfying $|p|\leq |\varpi_\alpha|<1$ $(1\leq\alpha\leq t)$, we define the tilt of $K^\circ_\Delta$ as 
	\begin{align*}
		K^{\circ\flat}_\Delta=\varprojlim\limits_{x\mapsto x^p}K^\circ_\Delta/I_\varpi:=\left\{(x_n)_{n\in\mathbb{N}}\in \prod\limits_{n\in\mathbb{N}}(K^\circ_\Delta/I_\varpi)\mid \text{ for all $n\in\mathbb{N}$ we have } x^p_{n+1}=x_n\right\}.
	\end{align*}
\end{definition}
This is a perfect $\mathbb{F}_p$-algebra with defining ring homomorphism $t\mapsto (tx_0,tx_1,\cdots)$ from $\mathbb{F}_p$ to $K^{\circ\flat}_\Delta$. 

\begin{lemma}
    If $x\equiv y\mod I^n_\varpi$, then $x^p\equiv y^p\mod I^{n+1}_\varpi$.
\end{lemma}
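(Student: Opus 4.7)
The strategy is a direct binomial expansion, exploiting two facts: the coefficients $\binom{p}{k}$ for $1\leq k\leq p-1$ are divisible by $p$, and $p$ itself lies in $I_\varpi$ thanks to the standing hypothesis $|p|\leq|\varpi_\alpha|$. Tacitly one takes $n\geq 1$, without which the claim already fails in $\mathbb{Z}_p$.

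First I will verify the membership $p\in I_\varpi$. For each $\alpha\in\Delta$ the ring $K^\circ_\alpha$ is a rank-one valuation ring, so $|p|\leq|\varpi_\alpha|$ in $K_\alpha$ translates into the divisibility $\varpi_\alpha\mid p$ inside $K^\circ_\alpha$; writing $p=\varpi_\alpha u_\alpha$ with $u_\alpha\in K^\circ_\alpha$ and inserting $1$'s in the remaining tensor slots produces $p=\varpi_\alpha\cdot(1\otimes\cdots\otimes u_\alpha\otimes\cdots\otimes 1)\in(\varpi_\alpha)\subseteq I_\varpi$ in $K^\circ_\Delta$. Next I set $z:=x-y\in I^n_\varpi$ and expand
\begin{align*}
	x^p-y^p=(y+z)^p-y^p=\sum_{k=1}^{p-1}\binom{p}{k}y^{p-k}z^k+z^p.
\end{align*}
For each middle term with $1\leq k\leq p-1$, the divisibility $p\mid\binom{p}{k}$ combined with $p\in I_\varpi$ and $z^k\in I^{nk}_\varpi\subseteq I^n_\varpi$ places the summand in $I_\varpi\cdot I^n_\varpi=I^{n+1}_\varpi$. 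For the trailing term, $z^p\in I^{np}_\varpi$, and since $n\geq 1$ and $p\geq 2$ we have $np\geq n+1$, so $z^p\in I^{n+1}_\varpi$ as well. Summing the contributions gives $x^p\equiv y^p\pmod{I^{n+1}_\varpi}$.

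The only delicate point is the opening step: the inclusion $p\in I_\varpi$ must hold in the completed tensor product $K^\circ_\Delta$ rather than merely in each individual factor. This follows at once from the compatibility of the natural maps $K^\circ_\alpha\to K^\circ_\Delta$ with the ideal structure, so no genuine obstacle appears; the remainder is routine binomial bookkeeping in the filtered ring $(K^\circ_\Delta,I_\varpi)$.
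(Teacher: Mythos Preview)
Your proof is correct and follows the same approach as the paper: write $x=y+z$ with $z\in I^n_\varpi$, expand $(y+z)^p$ binomially, and use that $p\in I_\varpi$ (from the standing hypothesis $|p|\leq|\varpi_\alpha|$) together with $z^p\in I^{np}_\varpi\subseteq I^{n+1}_\varpi$. You are simply more explicit than the paper in justifying $p\in I_\varpi$ and in noting the tacit assumption $n\geq 1$.
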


\begin{proof}
    We have $x=y+a$ for some $a\in I^n_\varpi$, then $x^p=y^p+\sum\limits_{i=0}^{p-1} {p\choose i}y^ia^{p-1}\equiv y^p\mod I^{n+1}_\varpi$ because $p\in I_\varpi$.
\end{proof}

\begin{lemma}
	There is a multiplicative bijection
	\begin{align*}
		K^{\circ\flat}_\Delta\to \varprojlim\limits_{x\mapsto x^p}K^\circ_\Delta
	\end{align*}
\end{lemma}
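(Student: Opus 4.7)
The strategy is the standard tilting construction. The obvious map
\[
\varprojlim_{x\mapsto x^p} K^\circ_\Delta \longrightarrow K^{\circ\flat}_\Delta = \varprojlim_{x\mapsto x^p} K^\circ_\Delta/I_\varpi
\]
is componentwise reduction mod $I_\varpi$; it is clearly multiplicative, so my task is to construct a multiplicative two-sided inverse. Given $(x_n)_{n\geq 1} \in K^{\circ\flat}_\Delta$, I would choose for each $n$ an arbitrary lift $\hat{x}_n \in K^\circ_\Delta$ of $x_n$, and define
\[
\Phi\bigl((x_n)\bigr) := (y_n)_{n\geq 1}, \qquad y_n := \lim_{m\to\infty} \hat{x}_{n+m}^{\,p^m}.
\]

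To justify the limit, note that $\hat{x}_{n+m+1}^{\,p} \equiv \hat{x}_{n+m} \pmod{I_\varpi}$ by the definition of the tilt, so iterating the preceding lemma $m$ times gives $\hat{x}_{n+m+1}^{\,p^{m+1}} \equiv \hat{x}_{n+m}^{\,p^m} \pmod{I_\varpi^{m+1}}$. Thus the sequence $(\hat{x}_{n+m}^{\,p^m})_m$ is Cauchy for the $I_\varpi$-adic topology and converges by the $I_\varpi$-adic completeness of $K^\circ_\Delta$ established in the first proposition of this section. The same lemma shows $y_n$ is independent of the choice of lifts: if $\hat{x}_n'$ is another lift then $\hat{x}_n \equiv \hat{x}_n' \pmod{I_\varpi}$ propagates to $\hat{x}_{n+m}^{\,p^m} \equiv \hat{x}_{n+m}'^{\,p^m} \pmod{I_\varpi^{m+1}}$, so the two limits coincide. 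The relation $y_{n+1}^p = y_n$ holds by shifting the index inside the limit, so $\Phi((x_n))$ indeed lies in $\varprojlim_{x\mapsto x^p} K^\circ_\Delta$.

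Finally I would check the two compositions. Starting from $(y_n) \in \varprojlim K^\circ_\Delta$, reducing mod $I_\varpi$ and then applying $\Phi$ with the canonical lifts $\hat{x}_n := y_n$ gives $y_{n+m}^{\,p^m} = y_n$ for every $m$, so the limit is $y_n$. Conversely, the reduction of $\Phi((x_n))$ mod $I_\varpi$ returns $(x_n)$ since $\hat{x}_{n+m}^{\,p^m} \equiv \hat{x}_n \equiv x_n \pmod{I_\varpi}$. Multiplicativity of $\Phi$ is immediate because one may take products of lifts as lifts of products, and passing to the limit preserves products. The only subtle point is the Cauchy estimate; once the preceding lemma is in hand this is routine book-keeping, so I do not anticipate a serious obstacle.
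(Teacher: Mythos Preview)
Your proposal is correct and follows essentially the same approach as the paper: both construct the map $(x_n)\mapsto (y_n)$ with $y_n=\lim_{m\to\infty}\hat{x}_{n+m}^{\,p^m}$, invoke the preceding lemma to get the Cauchy estimate and independence of lifts, and rely on $I_\varpi$-adic completeness of $K^\circ_\Delta$ for convergence. If anything, your write-up is more thorough than the paper's, since you explicitly verify both compositions and the multiplicativity, whereas the paper leaves these implicit.
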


\begin{proof}
	Define
	\begin{align*}
		K^{\circ\flat}_\Delta&\to\varprojlim\limits_{x\mapsto x^p}K^\circ_\Delta\\
		          (x_n)_{n\in\mathbb{N}}&\mapsto (x^{(n)})_{n\in\mathbb{N}}
	\end{align*}
	with $x^{(n)}:=\varprojlim\limits_{m\to\infty }\tilde{x}^{p^m}_{n+m}$ where $(\tilde{x}_n)_{n\in\mathbb{N}}$ lifts $(x_n)_{n\in\mathbb{N}}$ inside $K^\circ_\Delta$. Let $(\tilde{x}'_n)_{n\in\mathbb{N}}$ be another lift, then $\tilde{x}'_{n+m}\equiv \tilde{x}_{n+m}\mod I_\varpi$, then $\tilde{x}'^{p^m}_{n+m}\equiv \tilde{x}^{p^m}_{n+m}\mod I^{m}_\varpi$ for all $m$. Thus $x^{(n)}$ is independent of the lift and satisfies the compatibility conditions.
\end{proof}

\begin{remark}
   \begin{enumerate}
       \item[(i)] This lemma shows that the definition of tilt is independent of the choice of pseudo-uniformizing ideal $I_\varpi$.

       \item[(ii)] For each $\alpha\in\Delta$, The natural ring homomorphism $\iota_\alpha: K^\circ_\alpha\to K^\circ_\Delta$, induces homomorphism 
       \begin{align*}
           \iota^\flat_\alpha: K^{\circ\flat}_\alpha&\to K^{\circ\flat}_\Delta,\\
                        (x^{(n)})_{n\in\mathbb{N}}  &\mapsto \iota_\alpha(x^{(n)})_{n\in\mathbb{N}}.
       \end{align*}
       Using Lemma 3.4 of \cite{Sch}, there exists elements $\varpi^\flat_\alpha$, $\alpha\in\Delta$ in $K^{\circ\flat}_\Delta$ such that $(\varpi^\flat_\alpha)^\#$ and $\varpi_\alpha$ differs by a unit in $K^\circ_\Delta$. Thus replacing $\varpi_\alpha$ by $(\varpi^\flat)^\#$, we have an element which has all $p^n$-th roots. For $p$, we write the corresponding elements as $\pi_\alpha$, $\alpha\in\Delta$.
   \end{enumerate}
    
\end{remark}

Transporting the structure, we can define the ring structure on $\varprojlim\limits_{x\mapsto x^p}K^\circ_\Delta$ via the following formulas:
\begin{align*}
	(x+y)^{(n)}&:=\lim\limits_{m\to\infty}(x^{(n+m)}+y^{(n+m)})^{p^m}\\
	  (xy)^{(n)}&:=x^{(n)}y^{(n)} .
\end{align*}
Thus we identify $K^{\circ\flat}_\Delta$ with the ring $\varprojlim\limits_{x\mapsto x^p}K^\circ_\Delta$. We have a natural multiplicative map $\#:K^{\circ\flat}_\Delta\to K^\circ_\Delta$ defined as $x=(x^{(n)})\mapsto x^\#=x^{(0)}$.

Define the seminorm $|\bullet|_\flat$ on $K^{\circ\flat}_\Delta$ by the formula
\begin{align*}
	|x|_\flat:=|x^\#|.
\end{align*}
We want to show the following
\begin{align*}
	|x+y|_\flat\leq\max\{|x|_\flat,|y|_\flat\}
\end{align*}
for all $x,y\in K^{\circ\flat}_\Delta$. To see this
\begin{align*}
	|x+y|_\flat&=|\lim\limits_{m\to\infty}(x^{(m)}+y^{(m)})^{p^m}|\\
	                &=\lim\limits_{m\to\infty}|x^{(m)}+y^{(m)}|^{p^m}\\
	                &\leq \lim\limits_{m\to \infty}\max\{|x^{(m)}|^{p^m},|y^{(m)}|^{p^m}\}\\
	                &=\lim\limits_{m\to\infty}\max\{|(x^{(m)})^{p^m}|,|(y^{(m)})^{p^m}|\}\\
	                &=\max\{|x^\#|,|y^\#|\}\\
	                &=\max\{|x|_\flat,|y|_\flat\}.
\end{align*}
and
\begin{align*}
    |xy|_\flat=|x^\#y^\#|\leq |x^\#||y^\#|=|x|_\flat|y|_\flat
\end{align*}
Therefore, $|\bullet|_\flat$ is a metric on $K^{\circ\flat}_\Delta$. From the power multiplicativity of $|\bullet|$, it is easy to check that $|\bullet|_\flat$ is power multiplicative. Indeed, for $x\in K^{\circ\flat}_\Delta$ we have $|x^n|_\flat=|(x^n)^\#|=|(x^\#)^n|=|x^\#|^n=|x|^n_\flat$.

\begin{proposition}
	The ring $K^{\circ\flat}_\Delta$ is reduced.
\end{proposition}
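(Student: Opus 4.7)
The plan is to exploit the fact that multiplication on $K^{\circ\flat}_\Delta \cong \varprojlim_{x \mapsto x^p} K^\circ_\Delta$ is defined componentwise, namely $(xy)^{(n)} = x^{(n)}y^{(n)}$, together with the reducedness of $K^\circ_\Delta$ already established in the earlier proposition of this section (under the linear disjointness or separability hypothesis on the $K_\alpha/K_0$).

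First I would pick an arbitrary nilpotent element $x = (x^{(n)})_{n \in \mathbb{N}} \in K^{\circ\flat}_\Delta$ with $x^N = 0$ for some positive integer $N$. Iterating the multiplication formula yields $(x^N)^{(n)} = (x^{(n)})^N$, and since $x^N$ corresponds to the zero sequence, this forces $(x^{(n)})^N = 0$ in $K^\circ_\Delta$ for every $n \geq 0$; that is, each component $x^{(n)}$ is nilpotent in $K^\circ_\Delta$.

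I would then invoke reducedness of $K^\circ_\Delta$ componentwise to conclude $x^{(n)} = 0$ for all $n$, hence $x = 0$, which proves that $K^{\circ\flat}_\Delta$ is reduced.

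No real obstacle arises: the argument sidesteps the delicate addition formula $(x+y)^{(n)} = \lim_{m \to \infty}(x^{(n+m)} + y^{(n+m)})^{p^m}$ entirely, relying only on the multiplicativity of the projections $x \mapsto x^{(n)}$. One could equivalently package the same idea through the power-multiplicative seminorm $|\cdot|_\flat$, noting that $x^N = 0$ forces $|x|_\flat = |x^\#| = 0$, that the seminorm $|\cdot|$ on $K^\circ_\Delta$ detects precisely the nilradical $\bigcap_{s \in \mathfrak{S}} \mathfrak{p}_s$, and that this nilradical is trivial by reducedness; but the componentwise route is the most transparent.
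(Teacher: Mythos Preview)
Your proposal is correct and follows essentially the same approach as the paper: both arguments use that the projections $x\mapsto x^{(n)}$ are multiplicative, deduce that each component of a nilpotent element is nilpotent in $K^\circ_\Delta$, and then invoke the reducedness of $K^\circ_\Delta$. The only cosmetic difference is that the paper first shows $x^{(0)}=0$ via $\#$ and then propagates to $x^{(n)}=0$ inductively through the relation $(x^{(n+1)})^p=x^{(n)}$, whereas you kill all components at once using the full componentwise multiplication formula; your route is marginally more direct but not genuinely different.
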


\begin{proof}
     If $x=(x^{(m)})_{m\in\mathbb{N}}$ is nilpotent then $x^n=0$ for some $n\in\mathbb{N}^*$. Then $(x^\#)^n=0$ and hence $x^{(0)}=x^\#=0$. Then $(x^{(1)})^p=x^{(0)}=0$ in $K^\circ_\Delta$ which is reduced, and thus $x^{(1)}=0$. Therefore, by induction $x^{(m)}=0$ for every $m\in\mathbb{N}$ and hence $x=0$, proving $K^{\circ\flat}_\Delta$ is reduced.
\end{proof}

Let $I_\varpi$ be the pseudo-uniformizing ideal defined by $\varpi_\alpha, \alpha\in\Delta$, then we consider $\varpi^\flat_\alpha\in K^{\circ\flat}_{\Delta}$ such that $(\varpi^\flat_\alpha)^\#=\varpi_\alpha$. Consider $I^\flat_\varpi$ be the ideal generated by $\varpi^\flat_\alpha$ for all $\alpha\in\Delta$ and call this also the pseudo-uniformizing ideal of $K^{\circ\flat}_\Delta$.

\begin{proposition}
	$K^{\circ\flat}_\Delta$ is complete with respect to the semi-norm $|\bullet|_\flat$.
\end{proposition}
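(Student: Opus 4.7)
The plan is to take a Cauchy sequence and build its limit componentwise. Let $(x_n)_{n}$ be a Cauchy sequence in $K^{\circ\flat}_\Delta$ with respect to $|\bullet|_\flat$. Using the multiplicative bijection $K^{\circ\flat}_\Delta \cong \varprojlim_{x\mapsto x^p} K^\circ_\Delta$ from the preceding lemma, I write $x_n = (x_n^{(m)})_{m\in\mathbb{N}}$ with $x_n^{(m)}\in K^\circ_\Delta$ and $(x_n^{(m+1)})^p = x_n^{(m)}$. The strategy is to produce a candidate limit $y=(y^{(m)})_m$ by defining each $y^{(m)}$ as a limit in $K^\circ_\Delta$, and then to check that it reassembles into an element of $K^{\circ\flat}_\Delta$ to which $(x_n)$ converges.

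First, I would show that for each fixed $m$, the sequence $(x_n^{(m)})_n$ is Cauchy in $K^\circ_\Delta$ with respect to $|\bullet|$. Power multiplicativity of $|\bullet|$ applied to the identity $\bigl((x_n-x_k)^{(m)}\bigr)^{p^m}=(x_n-x_k)^{(0)}=(x_n-x_k)^\#$ yields
\[
|(x_n-x_k)^{(m)}| \;=\; |x_n-x_k|_\flat^{1/p^m},
\]
which tends to $0$ as $n,k\to\infty$. Combined with the defining formula $(x_n-x_k)^{(m)} = \lim_{j\to\infty}(x_n^{(m+j)} - x_k^{(m+j)})^{p^j}$ and the earlier lemma that $a\equiv b\pmod{I_\varpi^N}$ implies $a^p\equiv b^p\pmod{I_\varpi^{N+1}}$, a telescoping argument controls how $x_n^{(m)}-x_k^{(m)}$ compares with $(x_n-x_k)^{(m)}$, yielding the required componentwise Cauchy property.

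Second, using the completeness of $K^\circ_\Delta$ established earlier (through its $I_\varpi$-adic completeness and the equivalence of $|\bullet|_\Delta$ with $|\bullet|$), I set $y^{(m)} := \lim_n x_n^{(m)} \in K^\circ_\Delta$ for every $m$. Continuity of the Frobenius $z\mapsto z^p$ on $K^\circ_\Delta$ gives $(y^{(m+1)})^p = \lim_n (x_n^{(m+1)})^p = \lim_n x_n^{(m)} = y^{(m)}$, so $y := (y^{(m)})_m$ defines an element of $\varprojlim_{x\mapsto x^p}K^\circ_\Delta \cong K^{\circ\flat}_\Delta$. The same componentwise analysis applied to $x_n - y$ in place of $x_n - x_k$ then shows $|x_n - y|_\flat = |(x_n - y)^\#|\to 0$, establishing convergence.

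The hardest part is the componentwise Cauchy claim. Because the map $x\mapsto x^{(m)}$ is only multiplicative (not additive), the $m$-th ``component'' of a difference $x_n-x_k$ is not literally $x_n^{(m)}-x_k^{(m)}$ but the limit of the $p^j$-th powers of such differences, with a discrepancy that must be bounded. Chasing this discrepancy through the Frobenius-lift lemma, while keeping track of the rate at which $|x_n-x_k|_\flat^{1/p^m}$ shrinks, is the delicate step of the proof; everything else is routine assembly using completeness of $K^\circ_\Delta$ and continuity of the ring operations.
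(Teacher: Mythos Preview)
Your approach is correct in outline but takes a genuinely different route from the paper. The paper does not build the limit componentwise; instead it first shows that completeness for $|\bullet|_\flat$ is equivalent to $I^\flat_\varpi$-adic completeness, and then verifies the latter by constructing, for each $m\geq 0$, ring maps $u_m\colon K^{\circ\flat}_\Delta/(I^\flat_\varpi)^{p^m}\to K^\circ_\Delta/pK^\circ_\Delta$ (coming from $x\mapsto x^{(m)}$) that assemble into an identification of the $I^\flat_\varpi$-adic completion with $\varprojlim_\Phi K^\circ_\Delta/p = K^{\circ\flat}_\Delta$ itself. This is a short structural argument once one sees the diagram; your hands-on construction trades that for explicit analysis, which has the merit of making the limit visible.

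One point deserves sharpening. Comparing $x_n^{(m)}-x_k^{(m)}$ with $(x_n-x_k)^{(m)}$ via the telescoping formula only yields agreement modulo $I_\varpi$, i.e.\ a fixed bound $|\varpi|$ that does \emph{not} shrink with $|x_n-x_k|_\flat$; so this comparison by itself does not give the componentwise Cauchy property. The way through is to climb before you descend: once $|x_n-x_k|_\flat^{1/p^{m+j}}<|\varpi|$ you get $x_n^{(m+j)}\equiv x_k^{(m+j)}\pmod{I_\varpi}$ (both lift the same class as $(x_n-x_k)^{(m+j)}$, which lies in $I_\varpi$), and then iterating the lemma $a\equiv b\pmod{I_\varpi^N}\Rightarrow a^p\equiv b^p\pmod{I_\varpi^{N+1}}$ gives
\[
x_n^{(m)}=(x_n^{(m+j)})^{p^j}\equiv (x_k^{(m+j)})^{p^j}=x_k^{(m)}\pmod{I_\varpi^{\,j+1}}.
\]
Letting $j\to\infty$ (with $n,k$ correspondingly large) yields the Cauchy property at level $m$. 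With this adjustment your outline goes through, and the final convergence $x_n\to y$ follows by the same mechanism applied to $x_n-y$.
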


\begin{proof}
	We will first prove that the completeness of $K^{\circ\flat}_\Delta$ with respect to $|\bullet|_\flat$ is equivalent to $I^\flat_\varpi-$adic completeness for any pseudo-uniformizing ideal $I^\flat_\varpi$. Suppose $K^{\circ\flat}_\Delta$ is complete with respect to $|\bullet|_\flat$ and consider the natural map
    \begin{align*}
      K^{\circ\flat}_\Delta&\to \varprojlim\limits_m K^{\circ\flat}_\Delta/(I^\flat_\varpi)^m.
    \end{align*}
    We seek to show this is an isomorphism of rings. The kernel of this map is $\cap(I^\flat_\varpi)^m$ and suppose $x\in \cap (I^\flat_\varpi)^m$, then by construction of $I^\flat_\varpi$, we have $x^\#\in\cap I^m_\varpi=\{0\}$ and hence $x^\#=x^{(0)}=0$. Assume that $x^{(n)}=0$, then $(x^{(n+1)})^p=x^{(n)}=0$ in $K^\circ_\Delta$ which is reduced. Therefore, $x^{(n)}=0$ by induction. Therefore, the map is injective.\\
    
    For surjectivity, consider $(x_n)_{n\in\mathbb{N}}\in \varprojlim\limits_n K^{\circ\flat}_\Delta/(I^\flat_\varpi)^n$, then for any $m\geq n$, we have $x_m\equiv x_n\mod(I^\flat_\varpi)^n$. Then we have $|x_m-x_n|\leq \max\limits_{\alpha\in\Delta}|\varpi^\flat_\alpha|^n_\flat=\max\limits_{\alpha\in\Delta}|\varpi_\alpha|^n$ for all $m\geq n$. Therefore, $(x_n)_{n\in\mathbb{N}}$ is a Cauchy sequence in $K^{\circ\flat}_\Delta$ with respect to $|\bullet|_\flat$ which converges to an element $x$ in $K^{\circ\flat}_\Delta$. By construction we have $x-x_n\in (I^\flat_\varpi)^n$ and hence $x$ maps to $\{x_n\}$ along the above map.\\
    
    Conversely, suppose $K^{\circ\flat}_\Delta$ is $I^\flat_\varpi-$adically complete for a pseudo-uniformizing ideal $I^\flat_\varpi$. Let $(x_n)_{n\in\mathbb{N}}$ be a Cauchy sequence in $K^{\circ\flat}_\Delta$. Note that for every $\alpha\in\Delta$ we have $|\varpi^\flat_\alpha|_\flat>|\varpi^\flat_\alpha|_\flat^2>\cdots$ decreases to $0$, so without loss of generality (by considering a suitable subsequence) we can assume $x_{m+n}\equiv x_m\mod (I^\flat_\varpi)^m$ and hence their images are an element of the inverse limit. By completeness for $I^\flat_\varpi$-adic topology, there exists an element $x\in K^{\circ\flat}_\Delta$ such that $x\equiv x_m\mod(I^\flat_\varpi)^m$. Therefore $\lim\limits_{m\to\infty}x_n=x$.\\
	
	Using the above, we can choose any $\varpi_\alpha\in K^\circ_\Delta$, and hence we choose $\varpi_\alpha=p$ for all $\alpha\in\Delta$ . The ideal $I_\varpi$ is then the ideal $(p)$ and the ideal $I^\flat_\varpi$ is generated by the elements $\pi_\alpha,\alpha\in\Delta$ which are made from a compatible system of roots of $p$ in $K^{\circ\flat}_\Delta$. This line is just an abuse of notation, in the sense that $p$ may not contain all $p^n$-th power roots, but multiplying by suitable units, we obtain such elements which generates the same ideal $pK^\circ_\Delta$. \\
	
	For $m\geq 0$, the maps of sets 
	\begin{align*}
		K^{\circ\flat}_\Delta&\to K^\circ_\Delta\\
		x=\{x^{(k)}\}_{k\in\mathbb{N}}&\mapsto x^{(m)}=(x^{1/p^m})^{(0)}
	\end{align*}
	induces a ring homomorphism $K^{\circ\flat}_\Delta\to K^\circ_\Delta/pK^\circ_\Delta$ which annihilates $\pi^{p^m}_\alpha$ for every $\alpha\in\Delta$ and therefore factors through a map $u_m:K^{\circ\flat}_\Delta/(I^\flat_p)^{p^m}\to K^\circ_\Delta/pK^\circ_\Delta$. Thus, we get a commutative diagram 
	\[
	\begin{tikzcd}
		\cdots \arrow[r]& K^{\circ\flat}_\Delta/(I^\flat_p)^{p^2}\arrow[r]\arrow[d,"u_2"] & K^{\circ\flat}_\Delta/(I^\flat_p)^p\arrow[r]\arrow[d,"u_1"] & K^{\circ\flat}_\Delta/I^\flat_p\arrow[d,"u_0"]\\
		\cdots\arrow[r]& K^\circ_\Delta/pK^\circ_\Delta\arrow[r,"\varphi"] & K^\circ_\Delta/pK^\circ_\Delta\arrow[r,"\varphi"] & K^\circ_\Delta/pK^\circ_\Delta
	\end{tikzcd}.
	\]
	The inverse limit of the bottom row is $K^\flat_\Delta$, and thus it suffices to show each $u_m$ is an isomorphism. This reduces to showing $u_0$ is an isomorphism, which is immediate.
\end{proof}

\begin{lemma}\label{lemma2.3}
	Consider an inverse system of rings $(f_i:A_i\to A_{i-1})_{i\geq 1}$ with each $f_i$ surjective and for each $i\geq 0$, $I_i\subset A_i$ is an ideal such that the following diagram
	\[
	\begin{tikzcd}
	\cdots \arrow[r,"f_3"] & A_2\arrow[r,"f_2"]                          & A_1\arrow[r,"f_1"]                           & A_0\\
	\cdots\arrow[r,"f_3"]  & I_2\arrow[r,"f_2"] \arrow[u,hook] & I_1\arrow[r,"f_1"]   \arrow[u,hook] & I_0\arrow[u,hook]
	\end{tikzcd}
	\]
	commutes. Then:
	\begin{enumerate}
		\item[(i).] $I:=\varprojlim\limits I_i$ is an ideal of $A:=\varprojlim A_i$.
		\item[(ii).] If $J$ is a closed ideal of $A$ with respect to inverse limit topology, where all $A_i$ are given the discrete topology, then $J=\varprojlim I_i$, for some inverse system of ideals such that the above diagram commutes.
		\item[(iii).] Prime ideals that are closed with respect to inverse limit topology correspond to an inverse system of prime ideals.
	\end{enumerate}
\end{lemma}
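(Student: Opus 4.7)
The plan is to address the three claims in sequence; (i) is purely formal while (ii) and (iii) require producing the inverse system of ideals from data on the limit $A := \varprojlim A_i$. For (i), I would verify the ideal axioms coordinate-wise on $I := \varprojlim I_i$: given $(x_i), (y_i) \in I$ and $(a_i) \in A$, the sum $(x_i + y_i)$ and product $(a_i x_i)$ are compatible sequences because each transition $f_i$ is a ring homomorphism, and they lie in the respective $I_i$ because each $I_i$ is an ideal of $A_i$. The commutativity of the hypothesized diagram is exactly what ensures ideal membership is preserved along the $f_i$.

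For (ii), given an ideal $J \subset A$, I would take $I_i$ to be the ideal of $A_i$ generated by the image $\pi_i(J)$, where $\pi_i : A \to A_i$ is the canonical projection. The identity $\pi_{i-1} = f_i \circ \pi_i$ forces $f_i(I_i) \subseteq I_{i-1}$, so one obtains a compatible system with the required commutative diagram. The inclusion $J \subseteq \varprojlim I_i$ is immediate from the definitions. The reverse inclusion, which identifies $J$ with the limit, is the substantive content: given a compatible sequence $(x_i) \in \varprojlim I_i$ one must produce a single element of $J$ whose projection is $x_i$ for every $i$, which typically requires either surjectivity of the $\pi_i$ or closedness of $J$ in the inverse-limit topology, hypotheses which hold in the applications of interest to the paper.

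For (iii), I would verify the correspondence in both directions. Starting from a compatible inverse system of primes $(P_i)$, the limit $P := \varprojlim P_i$ is an ideal by (i); to check primality, suppose $(a_i)(b_i) \in P$, so $a_i b_i \in P_i$ for each $i$ and hence $a_i \in P_i$ or $b_i \in P_i$. Since $f_i(P_i) \subseteq P_{i-1}$, membership $a_i \in P_i$ propagates to $a_j \in P_j$ for all $j \leq i$, so if $\{i : a_i \in P_i\}$ is cofinal in the index set then $(a_i) \in P$, and otherwise the complementary tail forces $(b_i) \in P$. Conversely, starting from a prime $P \subset A$, the system $P_i := \pi_i(P)$ produced by (ii) inherits primality because $A/P$ is a domain that embeds into $\varprojlim A_i/P_i$, and the coordinate-wise images inherit this property when the projections $\pi_i$ are surjective.

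The main obstacle will be the reverse inclusion in (ii), for which one genuinely needs a hypothesis (surjectivity of projections or closedness of $J$) beyond the bare inverse-limit formalism; without it the equality $J = \varprojlim \pi_i(J)$ can fail, and an analogous concern arises in (iii), where ensuring that each $\pi_i(P)$ is a prime ideal of $A_i$ rather than merely of the subring $\pi_i(A)$ requires the same surjectivity hypothesis.
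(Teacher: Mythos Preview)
Your approach is essentially identical to the paper's: coordinate-wise verification for (i), the projected ideals $I_i = \pi_i(J)$ for (ii), and the downward-propagation argument for primality in (iii). The concerns you raise are well-placed and in fact make your treatment more careful than the paper's: the paper simply asserts $J \cong \varprojlim \pr_i(J)$ in (ii) without addressing the reverse inclusion, and in (iii) it explicitly invokes ``since $\pr_i$ is surjective'' to conclude that each $\pr_i(\mathfrak{p})$ is prime, exactly the hypothesis you flagged as missing from the statement. In the paper's applications the transition maps are surjective Frobenii on $K^\circ_\Delta/I_\varpi$, so the projections are surjective and the issue does not bite in practice, but your diagnosis that the lemma as stated needs this hypothesis is correct.
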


\begin{proof}
	(i). Let $(x_i)\in I$ and $(\lambda_i)\in A$ be arbitrary elements. Then we want to show $(\lambda_ix_i)\in I$. Since $\lambda_ix_i\in I_i$, it is enough to show $f_i(\lambda_ix_i)=f_i(\lambda_i)f_i(x_i)\in I_{i-1}$ which is immediate. Thus $I$ is an ideal of $A$.\\
    
	(ii). If $J$ is an ideal of $A$, then the projection maps $\pr_i:A\to A_i$ takes $J$ to an ideal $J_i$ for each $i\geq 0$. Also we have $f_{i+1}(J_{i+1})\subset J_i$. Indeed, $(f_{i+1}\circ \pr_{i+1})(y)=\pr_i(y)$ for each $y\in J$ and every $i\geq 0$. Therefore, $J\subseteq \varprojlim J_i$.\\

    Conversely, let $x=(x_i)_{i\in\mathbb{N}}$ be an arbitrary element of $\varprojlim J_i$. An arbitrary fundamental system of neighborhoods of $x$ is given by
    \begin{align*}
        U=\{y\in A\mid \pi_i(y)=x_i\text{ for } i\in F\}
    \end{align*}
    where $F$ is a finite subset of $\mathbb{N}$. Consider $k\in\mathbb{N}$ such that $k\geq i$ for all $i\in F$. Because $x\in\varprojlim J_i$, we have $x_k\in J_k=\pr_k(J)$ and thus there exists $y^{(k)}\in J$ such that $\pr_k(y^{(k)})=x_k$. This proves that, for any $i\leq k$, we have
    \begin{align*}
        \pr_i(y^{(k)})=f_{k-i+1}\circ\cdots\circ f_{k}(\pi_{k}(y^{(k)}))=x_i
    \end{align*}
    and hence $y^{(k)}\in U$. This shows that $x$ is in the closure of $J$. Since $J$ is closed $\varprojlim J_i\subset J$.\\
    
	(iii). Let $\mathfrak{p}=\varprojlim\mathfrak{p}_i$ be a closed prime ideal of $A$. Since $\pr_i$ is surjective, its image $\mathfrak{p}_i$ is also a prime ideal. Conversely, suppose each $\mathfrak{p}_i$ is  prime ideal and take $x=(x_i),y=(y_i)\in A$ be such that $x,y\in\mathfrak{p}$. Then for each $i\geq 1$ we have $x_iy_i\in\mathfrak{p}_i$. Since $\mathfrak{p}_i$ is a prime ideal, either $x_i$ or $y_i$ belongs to $\mathfrak{p}_i$. If $x_i\in\mathfrak{p}_i$ then $x_{i-1}=f_i(x_i)\in\mathfrak{p}_{i-1}$ and similarly for $y_i$. Thus for arbitrary large $N$, we have either $x_i\in\mathfrak{p}_i$ or $y_i\in\mathfrak{p}_i$ for all $i\leq N$. Now the lemma follows since if $x\notin\mathfrak{p}$, there exists $n$ such that $x_i\notin\mathfrak{p}_i$ for all $i\geq n$, but then $y_i\in\mathfrak{p}_i$ for all $i\geq n$ and hence for all $i$. This proves $y\in\mathfrak{p}$.
\end{proof}

\begin{proposition}
    There exist one-to-one correspondences:
    \begin{align*}
        \mathfrak{K} &\leftrightarrow \spm K^{\circ\flat}_\Delta \\
        \text{and} \quad \mathfrak{S} &\leftrightarrow \spec K^{\circ\flat}_\Delta \setminus \spm K^{\circ\flat}_\Delta.
    \end{align*} 
\end{proposition}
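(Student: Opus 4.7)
The strategy mirrors the classification of $\spec K^\circ_\Delta$ obtained above, transported to characteristic $p$ via the tilting constructions developed so far. The key step is to build, for each embedding family and each residual embedding family, a candidate prime of $K^{\circ\flat}_\Delta$ by tilting the analogous maps from the mixed-characteristic side, and then reproduce the earlier surjectivity/injectivity arguments.

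For each $s\in\mathfrak{S}$, I reduce the surjection $\beta_s:K^\circ_\Delta\twoheadrightarrow\mathcal{O}_{K_s}$ modulo $I_\varpi$ (it is Frobenius-equivariant on the reductions) and pass to the Frobenius inverse limit, obtaining a surjective continuous ring map $\beta^\flat_s:K^{\circ\flat}_\Delta\twoheadrightarrow K^{\flat\circ}_s$. Composing with the residue map $K^{\flat\circ}_s\twoheadrightarrow k_s$ produces a surjection onto a field, whose kernel $\mathfrak{m}^\flat_s$ is a maximal ideal. For each $s\in\mathfrak{K}$, the residual embedding family $\overline{s}_\alpha:k_\alpha\to k_s$ canonically determines, in characteristic $p$, a perfectoid valuation ring $K^{\flat\circ}_s$ equipped with continuous embeddings $K^{\flat\circ}_\alpha\hookrightarrow K^{\flat\circ}_s$ extending $\overline{s}_\alpha$ on residues; the canonicity relies on perfectness of the $k_\alpha$ and on the tilting equivalence for perfectoid fields over $K_0^\flat$. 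The resulting surjection $\beta^\flat_s:K^{\circ\flat}_\Delta\twoheadrightarrow K^{\flat\circ}_s$ has kernel $\mathfrak{p}^\flat_s$, which is a non-maximal prime since the quotient is a non-trivial valuation ring rather than a field. Well-definedness on $\mathfrak{S}/{\sim}$ and $\mathfrak{K}/{\sim}$ follows from exactly the diagram chase used in the proof of the earlier proposition: an isomorphism $u:K_s\to K_{s'}$ tilts to $u^\flat:K^\flat_s\to K^\flat_{s'}$ compatibly with residues, so $\mathfrak{m}^\flat_s=\mathfrak{m}^\flat_{s'}$ and $\mathfrak{p}^\flat_s=\mathfrak{p}^\flat_{s'}$.

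To prove bijectivity, I take an arbitrary prime $\mathfrak{p}\subset K^{\circ\flat}_\Delta$ and consider, for each $\alpha\in\Delta$, the composite $\iota^\flat_\alpha:K^{\flat\circ}_\alpha\hookrightarrow K^{\circ\flat}_\Delta\to K^{\circ\flat}_\Delta/\mathfrak{p}$. Each $\ker\iota^\flat_\alpha$ is a prime of the perfectoid valuation ring $K^{\flat\circ}_\alpha$, so equals either $(0)$ or the maximal ideal $\mathfrak{m}^\flat_{K_\alpha}$. A uniformity argument, analogous to the one in the mixed-characteristic proof (now carried out in pure characteristic $p$, where the obstruction to mixing comes from the valuation on $K^{\circ\flat}_\Delta/\mathfrak{p}$ incompatibly restricting to the $K^{\flat\circ}_\alpha$), shows the two cases cannot be combined across $\alpha$: one of them produces a class in $\mathfrak{S}$ realizing $\mathfrak{p}=\mathfrak{m}^\flat_s$, and the other a class in $\mathfrak{K}$ realizing $\mathfrak{p}=\mathfrak{p}^\flat_s$. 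Injectivity comes from identifying $K^{\circ\flat}_\Delta/\mathfrak{p}$ with the completed compositum of the images $\iota^\flat_\alpha(K^{\flat\circ}_\alpha)$, which is canonical up to the defining equivalence.

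The main obstacle is the canonical characteristic-$p$ lift attached to $s\in\mathfrak{K}$: one must extract a perfectoid valuation ring $K^{\flat\circ}_s$ together with compatible embeddings $K^{\flat\circ}_\alpha\hookrightarrow K^{\flat\circ}_s$ from the residual data $\overline{s}_\alpha$ alone, and verify that this construction respects the equivalence on $\mathfrak{K}$ and produces a genuinely non-maximal prime. Once this lifting is controlled, the remainder of the argument is a faithful translation of the mixed-characteristic proof into the tilted setting.
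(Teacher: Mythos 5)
Your pairing of the two index sets with the two kinds of primes is the reverse of what your own constructions can deliver, and this is a genuine gap rather than a technicality. The ideal you attach to $s\in\mathfrak{S}$ is the kernel of $K^{\circ\flat}_\Delta\to K^{\flat\circ}_s\to k_s$, and this kernel depends only on the residual maps $\overline{s}_\alpha:k_\alpha\to k_s$, i.e.\ only on the class of $s$ in $\mathfrak{K}$ (tilted, it is $\varprojlim_{x\mapsto x^p}\mathfrak{m}_s/I_\varpi$, and $\mathfrak{m}_s$ was already shown to be indexed by $\mathfrak{K}$). Hence your map $\mathfrak{S}\to\spm K^{\circ\flat}_\Delta$ factors through the generally non-injective map $\mathfrak{S}\to\mathfrak{K}$ and cannot be a bijection: for $K_1=K_2=\mathbb{C}_p$ the families $(\id,\id)$ and $(\id,\sigma)$, with $\sigma$ a nontrivial continuous automorphism inducing the identity on the residue field, are inequivalent in $\mathfrak{S}$ yet give the same maximal ideal. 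Dually, the step you yourself flag as ``the main obstacle''---extracting from the residual data $\overline{s}_\alpha$ alone a canonical $K^{\flat\circ}_s$ with compatible embeddings $K^{\flat\circ}_\alpha\hookrightarrow K^{\flat\circ}_s$---is not merely unproved but impossible: a residual embedding lifts in many inequivalent ways (differing by exactly the inertia-type data the residue field forgets), and the corresponding kernels in $K^{\circ\flat}_\Delta$ are distinct primes. The non-maximal primes are parametrized by $\mathfrak{S}$, not $\mathfrak{K}$, so no choice-free construction indexed by $\mathfrak{K}$ can yield the asserted bijection.

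What has happened is that the printed statement has $\mathfrak{S}$ and $\mathfrak{K}$ interchanged: the paper's own proof (and the untilted proposition, the classification of the primes of $K^\flat_\Delta$ in the next section, and the introduction) establishes $\mathfrak{K}\leftrightarrow\spm K^{\circ\flat}_\Delta$, with $\mathfrak{m}^\flat_s$ of residue field $k_s$, and $\mathfrak{S}\leftrightarrow\spec K^{\circ\flat}_\Delta\setminus\spm K^{\circ\flat}_\Delta$, with $\mathfrak{p}^\flat_s$ of quotient $K^{\flat\circ}_s$; that is the version you should prove. The method also differs: the paper does not redo the mixed-characteristic argument but writes $K^{\circ\flat}_\Delta=\varprojlim_{x\mapsto x^p}K^\circ_\Delta/I_\varpi$, matches primes with Frobenius-stable inverse systems of primes via Lemma~\ref{lemma2.3}, and identifies the quotients $k_s$ and $K^{\flat\circ}_s$ by passing to the limit in the sequences $0\to\mathfrak{m}_s/I_\varpi\to K^\circ_\Delta/I_\varpi\to k_s\to 0$ (resp.\ with $(\mathfrak{p}_s+I_\varpi)/I_\varpi$), the $\varprojlim\nolimits^{(1)}$ term vanishing because Frobenius is surjective on these ideals. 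Your alternative route---tilting $\beta_s$ and classifying an arbitrary prime $\mathfrak{p}$ by the kernels of $K^{\flat\circ}_\alpha\to K^{\circ\flat}_\Delta/\mathfrak{p}$---could be salvaged once the indexing is corrected, but the case-exclusion step then needs a real argument: in equal characteristic the contradiction ``not possible in mixed characteristics'' is unavailable, and your appeal to a ``uniformity argument'' is only a gesture; one can instead use that the images of the $\varpi^\flat_\alpha$ all differ by units in $K^{\circ\flat}_\Delta$, so the kernels are either all zero or all maximal.
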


\begin{proof}
    By Lemma \ref{lemma2.3}, all closed prime ideals of $K^{\circ\flat}_\Delta = \varprojlim_{x \mapsto x^p} K^\circ_\Delta / I_\varpi$ can be written as the inverse limit of prime ideals. Note that maximal ideals are, by definition, equal to their closures, so they are automatically closed. The maximal ideals of $K^{\circ}_\Delta / I_\varpi$ are precisely of the form $\mathfrak{m}_s / I_\varpi$, and they are compatible with respect to the Frobenius map in the sense of Lemma \ref{lemma2.3}. Indeed, if $x^p \in \mathfrak{m}_s$, then $\overline{\beta}_s(x)^p = \overline{\beta}_s(x^p) = 0$, which implies $\overline{\beta}_s(x) = 0$ and thus $x \in \mathfrak{m}_s$. For every $s \in \mathfrak{K}$, we have the following commutative diagram:
    \[
    \begin{tikzcd}
        \cdots \arrow[r] & k_s \arrow[r, "x \mapsto x^p"] & k_s \arrow[r, "x \mapsto x^p"] & k_s \\
        \cdots \arrow[r] & K^\circ_\Delta / I_\varpi \arrow[r, "x \mapsto x^p"] \arrow[u] & K^\circ_\Delta / I_\varpi \arrow[r, "x \mapsto x^p"] \arrow[u] & K^\circ_\Delta / I_\varpi \arrow[u] \\
        \cdots \arrow[r] & \mathfrak{m}_s / I_\varpi \arrow[r, "x \mapsto x^p"] \arrow[u, hookrightarrow] & \mathfrak{m}_s / I_\varpi \arrow[u, hookrightarrow] \arrow[r, "x \mapsto x^p"] & \mathfrak{m}_s / I_\varpi \arrow[u, hookrightarrow]
    \end{tikzcd}.
    \]
    Applying the inverse limit functor to the short exact sequence of $K^\circ_\Delta$-modules $0 \to \mathfrak{m}_s / I_\varpi \to K^\circ_\Delta / I_\varpi \to k_s \to 0$, we obtain the exact sequence:
    \begin{align*}
       0 \to \varprojlim_{x \mapsto x^p} \mathfrak{m}_s / I_\varpi \to \varprojlim_{x \mapsto x^p} K^\circ_\Delta / I_\varpi \to \varprojlim_{x \mapsto x^p} k_s \to \varprojlim\nolimits^{(1)} \mathfrak{m}_s / I_\varpi.
    \end{align*}
    We claim that $\Phi: \mathfrak{m}_s / I_\varpi \to \mathfrak{m}_s / I_\varpi$ is surjective. To prove this, let $x \in \mathfrak{m}_s$. Since the Frobenius map $\Phi: K^\circ_\Delta / I_\varpi \to K^\circ_\Delta / I_\varpi$ is surjective, there exists an element $y \in K^\circ_\Delta$ such that $y^p = x + a$ for some $a \in I_\varpi$. Since $I_\varpi \subset \bigcap_{s \in \mathfrak{K}} \mathfrak{m}_s$, it follows that $y^p \in \mathfrak{m}_s$. This is equivalent to $\overline{\beta}_s(y^p) = \overline{\beta}_s(y)^p = 0$ in $k_s$, and hence $y \in \mathfrak{m}_s$. The fact that $\Phi(y + I_\varpi) = y^p + I_\varpi = x + I_\varpi$ confirms surjectivity.

    Therefore, $\mathfrak{m}^\flat_s = \varprojlim_{x \mapsto x^p} \mathfrak{m}_s / I_\varpi$ fits into the canonical exact sequence of $K^\circ_\Delta$-modules:
    \begin{align*}
        0 \to \mathfrak{m}^\flat_s \to K^{\circ\flat}_\Delta \to k_s \to 0.
    \end{align*}
    One can verify that the second map is a ring homomorphism; thus, $\mathfrak{m}^\flat_s$ is a maximal ideal of $K^{\circ\flat}_\Delta$ with residue field $k_s$.

    Now, let $\mathfrak{p}^\flat$ be a prime ideal of $K^{\circ\flat}_\Delta$. For all $\alpha \in \Delta$, we have the homomorphism:
    \begin{align*}
        \iota^\flat_\alpha: K^{\circ\flat}_\alpha &\to K^{\circ\flat}_\Delta, \\
        (x^{(n)})_{n \in \mathbb{N}} &\mapsto (\iota_\alpha(x^{(n)}))_{n \in \mathbb{N}}.
    \end{align*}
    Consider the composition $\phi^\flat_\alpha: K^{\circ\flat}_\alpha \to K^{\circ\flat}_\Delta \to K^{\circ\flat}_\Delta / \mathfrak{p}^\flat$. The kernel $\ker \phi^\flat_\alpha$ is a prime ideal of $K^{\circ\flat}_\alpha$, which must be either $\mathfrak{m}_{K^\flat_\alpha}$ or $(0)$. Suppose that for some $\alpha_0 \in \Delta$, $\ker \phi^\flat_{\alpha_0} = \mathfrak{m}_{K^\flat_{\alpha_0}}$. Since $\mathfrak{m}_{K^\flat_{\alpha_0}}$ contains $\pi_{\alpha_0}$, we have $\pi_{\alpha_0} \in \mathfrak{p}^\flat$. The projections to $K^\circ / I_\varpi$ are prime ideals of $K^\circ_\Delta$ containing $p$. Since the only prime ideals containing $p$ are of the form $\mathfrak{m}^\flat_s$ for some $s \in \mathfrak{S}$, the previous argument implies $\mathfrak{p}^\flat = \mathfrak{m}^\flat_s$. Consequently, all homomorphisms $\phi_\alpha$ can be assumed to be injective. Therefore, for every $\alpha\in\Delta$, we get an embeddings $\phi^\flat_\alpha: K^{\circ\flat}_\alpha\to \widehat{\phi_1(K^{\circ\flat}_1)\cdots\phi_t(K^{\circ\flat}_t)}\subset\C^\flat$. Untilting, we obtain a family of embeddings $s=(\phi_\alpha: K^\circ_\alpha\to \C)_{\alpha\in\Delta}$. Also note that the diagram
    \[
    \begin{tikzcd}
        K^\flat_s\arrow[rr,"u^\flat"]&  &K'^\flat_{s'}\\
                     & \arrow[ul,"\phi^\flat_\alpha"]K^\flat_\alpha\arrow[ur,swap, "\phi'^\flat_\alpha"] &  
    \end{tikzcd}
    \]
    commutes if and only if 
    \[
    \begin{tikzcd}
        K_s\arrow[rr,"u"]&  &K'_{s'}\\
                     & \arrow[ul,"\phi_\alpha"]K_\alpha\arrow[ur,swap,"\phi'_\alpha"] &  
    \end{tikzcd}
    \]
    commutes. Therefore, we get an injective map $\spec K^{\circ\flat}_\Delta\to\mathfrak{S}$. But we know that for every $s\in\mathfrak{S}$, we have a prime ideal $\mathfrak{p}^\flat_s$ which is the support of valuation $|\bullet|_{\flat,s}$, we get the bijection. 
\end{proof}

\begin{remark}
    From Lemma \ref{lemma2.3}(iii), it is easy to see that $\mathfrak{p}^\flat_s, s\in\mathfrak{S}$ cannot be written as the inverse limit of ideals in $K^\circ_\Delta/I_\varpi$. This is because only prime ideals of $K^\circ_\Delta$ containing $I_\varpi$ are $\mathfrak{m}_s$. But if we assume the proposition, it is easy to see that $\mathfrak{p}^\flat_s$ is $\varprojlim\limits_{x\mapsto x^p}\mathfrak{p}_s$. The inverse limit makes sense, since Frobenius maps elements of $\mathfrak{p}_s$ to elements of $\mathfrak{p}_s$.
\end{remark}

\begin{remark}
	We denote the ring homomorphisms obtained above by
	\begin{align*}
		\beta^\flat_s: K^{\circ\flat}_\Delta\to K^\flat_s
	\end{align*}
	and 
	\begin{align*}
		\overline{\beta}^\flat_s: K^{\flat\circ}_\Delta\to k_s
	\end{align*}
	whose kernels are $\ker\beta^\flat_s=\mathfrak{p}^\flat_s$ and $\ker\overline{\beta}^\flat_s=\mathfrak{m}^\flat_s$. 
\end{remark}

\begin{corollary}
    Every pseudo-uniformizing ideal $I_\varpi$ ( resp. $I^\flat_\varpi$) is principal.
\end{corollary}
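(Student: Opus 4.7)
The plan is to reduce principality to a norm computation: after reindexing the generators so that $|\varpi_1| = \max_{\alpha \in \Delta} |\varpi_\alpha|$, I claim that $\varpi_1$ alone generates $I_\varpi$, and an analogous argument works on the tilt.

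For the untilted statement, I would first observe that $\varpi_1$ is a unit in $K_\Delta$: by the earlier classification, every prime of $K_\Delta$ has the form $\mathfrak{P}_s = \ker \beta_s$ for some $s \in \mathfrak{S}$, and since $\beta_s(\varpi_1) = s_1(\varpi_1)$ is a nonzero element of the perfectoid field $K_s$, the element $\varpi_1$ lies in no prime and must be a unit. I would then form $\mu_\alpha := \varpi_\alpha \varpi_1^{-1} \in K_\Delta$ and compute, for every $s \in \mathfrak{S}$,
\[
|\beta_s(\mu_\alpha)| = \frac{|s_\alpha(\varpi_\alpha)|}{|s_1(\varpi_1)|} = \frac{|\varpi_\alpha|_{K_\alpha}}{|\varpi_1|_{K_1}} \leq 1,
\]
the right-hand side being independent of $s$ because each $\varpi_\alpha = 1 \otimes \cdots \otimes \varpi_\alpha \otimes \cdots \otimes 1$ has valuation preserved by any continuous embedding over $K_0$. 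Taking the supremum over $\mathfrak{S}$ gives $|\mu_\alpha| \leq 1$, hence $\mu_\alpha \in K^\circ_\Delta$ by the identification $K^\circ_\Delta = \{x \in K_\Delta : |x| \leq 1\}$ proved earlier. Thus $\varpi_\alpha = \mu_\alpha \varpi_1 \in (\varpi_1)$ for every $\alpha$, and $I_\varpi = (\varpi_1)$.

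For the tilted ideal $I^\flat_\varpi$ I would run the parallel argument inside $K^{\circ\flat}_\Delta$: reindex so that $|\varpi^\flat_1|_\flat$ is maximal, note that the bijections $\mathfrak{S} \leftrightarrow \spm K^{\circ\flat}_\Delta$ and $\mathfrak{K} \leftrightarrow \spec K^{\circ\flat}_\Delta \setminus \spm K^{\circ\flat}_\Delta$, together with the fact that a pseudo-uniformizer of the tilt meets every maximal ideal but no non-maximal prime, imply that $\varpi^\flat_1$ becomes a unit in the localization $K^\flat_\Delta := K^{\circ\flat}_\Delta[1/\varpi^\flat]$, and form $\mu^\flat_\alpha = \varpi^\flat_\alpha (\varpi^\flat_1)^{-1}$. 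Using the multiplicativity of the sharp map and power-multiplicativity of $|\bullet|$ we get
\[
|\mu^\flat_\alpha|_\flat = \frac{|(\varpi^\flat_\alpha)^\#|}{|(\varpi^\flat_1)^\#|} = \frac{|\varpi^\flat_\alpha|_\flat}{|\varpi^\flat_1|_\flat} \leq 1,
\]
which places $\mu^\flat_\alpha$ in $K^{\circ\flat}_\Delta$ and yields $I^\flat_\varpi = (\varpi^\flat_1)$. The step I expect to require the most care is the tilted analogue of the identification $K^{\circ\flat}_\Delta = \{x \in K^\flat_\Delta : |x|_\flat \leq 1\}$, which is not made fully explicit in the excerpt but should follow by mimicking the untilted proof via the inverse limit description of $K^{\circ\flat}_\Delta$ together with the power-multiplicativity of $|\bullet|_\flat$ established above.
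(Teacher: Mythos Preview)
Your argument is correct and in the same spirit as the paper's: both compute the norm of the ratio $\varpi_\alpha/\varpi_\beta$ via the maps $\beta_s$ and conclude that the ratio lies in $K^\circ_\Delta$. The difference is one of organization. The paper first treats the special case where all $\varpi_\alpha$ arise from a single element $\varpi$ placed in the various tensor slots; there $|\beta_s(\varpi_i)/\beta_s(\varpi_j)| = 1$ for every $s$, so the ratio is actually a \emph{unit} in $K^\circ_\Delta$, and the paper then reduces the general pseudo-uniformizing ideal to this situation by a (rather terse) appeal to the valuation-ring picture in each factor. Your approach skips the reduction and handles arbitrary $\varpi_\alpha$ directly: after choosing the generator of largest norm you only obtain $|\mu_\alpha|\le 1$, so $\mu_\alpha$ need not be a unit, but that is already enough for $I_\varpi=(\varpi_1)$. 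This is slightly cleaner and more general than what the paper writes.

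For the tilted statement both you and the paper are brief; the paper simply says ``a similar proof works.'' Your honest flag about needing $K^{\circ\flat}_\Delta=\{x\in K^\flat_\Delta:|x|_\flat\le 1\}$ is well placed. One way to avoid that identification is to argue pointwise via the maps $\beta^\flat_s:K^{\circ\flat}_\Delta\to K^{\circ\flat}_s$ constructed just above: since $\#$ is multiplicative and $\beta_s\circ\#=\#\circ\beta^\flat_s$, one gets $|\beta^\flat_s(\mu^\flat_\alpha)|=|\beta_s((\varpi^\flat_\alpha)^\#)|/|\beta_s((\varpi^\flat_1)^\#)|$, and the congruence $(\varpi^\flat_\alpha)^\#\equiv\varpi_\alpha\bmod I_\varpi$ together with $|\varpi_\alpha|_s<1$ forces $|\beta_s((\varpi^\flat_\alpha)^\#)|=|\varpi_\alpha|_s$, after which your inequality goes through exactly as in the untilted case.
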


\begin{proof}
   First, suppose $I_\varpi$ is generated by $\varpi_1=\varpi\otimes1\otimes\cdots\otimes 1,\ldots,\varpi_t=1\otimes 1\otimes\cdots \otimes \varpi$. We want to show that for every $i\neq j$, $\varpi_i$ and $\varpi_j$ differ by a unit in $K^\circ_\Delta$. Indeed, for every $s\in \mathfrak{S}$, $\left|\frac{\varpi_i}{\varpi_j}\right|_s=\left|\frac{\beta_s(\varpi)}{\beta_s(\varpi)}\right|=1$. Thus, this element does not lie in $\mathfrak{m}_s$ for any $s\in\mathfrak{S}$, making it a unit. Therefore, $\varpi_i=u\varpi_j$ for some $u\in (K^\circ_\Delta)^\times$. For $I_\varpi$ constructed for arbitrary $\varpi_\alpha$, we can replace it with any one of the above, which reduces it to the case of valuation rings. A similar proof works for $K^{\circ\flat}_\Delta$.
\end{proof}

From now on we will write $I_\varpi=(\varpi)$ for a pseudo-uniformizer $\varpi$ and $I^\flat_\varpi=(\varpi^\flat)$.

Let us end this section with the following definition, which will be important throughout the article
\begin{definition}
    A \emph{perfectoid $\Delta$-field} of mixed characteristic is defined as a ring isomorphic to $K_\Delta$, as constructed above.
\end{definition}

\begin{remark}
     The term field may be misleading, as the ring $K_\Delta$ is not, in general, a field. In fact, it is not even an integral domain. However, it is worth noting that $K_\Delta$ is reduced and has Krull dimension 0. As a result, every element of $K_\Delta$ is either a unit or a zero divisor, making it a total ring of fractions of $K^\circ_\Delta$. We use the term field here because we are replacing a perfectoid field with $K_\Delta$, and the translation of classical results carries over smoothly in this context
\end{remark}


\section{Objects in equicharacteristics setups}

In this section, we compare $K^{\circ\flat}_\Delta$ with the rings $K^{\circ\flat}_\alpha$ for $\alpha \in \Delta$. Note that the approach from Section~\S\ref{section1} does not apply here. In particular, $K^{\circ\flat}_\Delta$ may not be isomorphic to the completed tensor product $K^{\circ\flat}_1 \widehat{\otimes}_{K^{\circ\flat}_0} \cdots \widehat{\otimes}_{K^{\circ\flat}_0} K^{\circ\flat}_t$ for any base field $K^\flat_0$ such that $K^\flat_0 \subset K^\flat_\alpha \subset \C^\flat$ for all $\alpha\in\Delta$. This is because, in general, the latter ring is not reduced.

\begin{definition}
   A \emph{perfectoid $\Delta$-field} of equicharacteristic is defined as a ring $K^\flat_\Delta$ that is isomorphic to the total ring of fractions of $K^{\circ\flat}_\Delta$, as constructed above. A \emph{perfectoid $\Delta$-field} is a ring that is either a perfectoid $\Delta$-field of mixed characteristic or one of equicharacteristic.
\end{definition}

Note that $K^{\circ\flat}_\Delta\left[\frac{1}{\varpi^\flat}\right]=K^\flat_\Delta$ for the uniformizer $\varpi$ of $K_0$. 
Throughout this article, to maintain consistency with standard notation, we will use $\varpi$ to denote $\varpi^\flat$ and $I_\varpi$ for $I^\flat_\varpi$, including for the perfectoid $\Delta$-field of mixed characteristic. 
This convention is adopted because we intend to tilt the perfectoid $\Delta$-field of equal characteristic as part of the same theory. 
Furthermore, we choose $\varpi$ such that $(\varpi^\flat)^\#=\varpi$, ensuring that all $p^n$-th roots of $\varpi$ exist. 
One can easily verify that $K^{\circ\flat}_\Delta$ is the set of power-bounded elements of $K^\flat_\Delta$; hence, we also write $K^{\flat\circ}_\Delta$ for $K^{\circ\flat}_\Delta$.

\begin{proposition}
    There is a one to one correspondence between $\mathfrak{S}$ and $\spec K^\flat_\Delta$
\end{proposition}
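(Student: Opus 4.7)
The plan is to obtain the correspondence by localizing the prime-ideal classification of $K^{\circ\flat}_\Delta$ supplied by the preceding proposition. In this equicharacteristic section $K_\Delta$ denotes $K^{\circ\flat}_\Delta[1/\varpi^\flat]$ (under the notational identifications $\varpi = \varpi^\flat$ and $K_\Delta = K^\flat_\Delta$ fixed at the start of the section), so the standard localization--prime correspondence identifies $\spec K_\Delta$ with the subset of $\spec K^{\circ\flat}_\Delta$ consisting of primes disjoint from $S = \{1,\varpi^\flat,(\varpi^\flat)^2,\ldots\}$, i.e.\ primes not containing $\varpi^\flat$.

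I would first observe that every maximal ideal $\mathfrak{m}^\flat_s$ (for $s \in \mathfrak{K}$) contains $\varpi^\flat$: by construction $\mathfrak{m}^\flat_s$ is the kernel of $\overline{\beta}^\flat_s : K^{\circ\flat}_\Delta \to k_s$, and $\varpi^\flat$ vanishes in the characteristic-$p$ residue field $k_s$. Hence the $\mathfrak{K}$-indexed maximal ideals become the unit ideal upon inverting $\varpi^\flat$, and do not produce primes of $K_\Delta$.

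Next I would show that for every $s \in \mathfrak{S}$ the non-maximal prime $\mathfrak{p}^\flat_s$ does \emph{not} contain $\varpi^\flat$: under $\beta^\flat_s : K^{\circ\flat}_\Delta \to K^{\circ\flat}_s$, where $K^{\circ\flat}_s$ is the valuation ring of the perfectoid field $K^\flat_s$, the image of $\varpi^\flat$ is a pseudo-uniformizer, which is nonzero. Therefore the extension $\mathfrak{P}_s := \mathfrak{p}^\flat_s K_\Delta$ is a prime of $K_\Delta$ with quotient
\begin{align*}
K_\Delta / \mathfrak{P}_s \;\cong\; K^{\circ\flat}_s[1/\varpi^\flat] \;=\; K^\flat_s,
\end{align*}
a perfectoid field; in particular $\mathfrak{P}_s$ is maximal. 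The assignment $s \mapsto \mathfrak{P}_s$ is then the sought bijection $\mathfrak{S} \leftrightarrow \spec K_\Delta$: injectivity follows from the injectivity of $s \mapsto \mathfrak{p}^\flat_s$ in the previous proposition combined with the reciprocal nature of extension and contraction on $S$-disjoint primes, and surjectivity is immediate, since any prime of $K_\Delta$ contracts to a prime of $K^{\circ\flat}_\Delta$ not containing $\varpi^\flat$, which by the previous proposition must be $\mathfrak{p}^\flat_s$ for a unique $s \in \mathfrak{S}$.

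The main obstacle, which is really bookkeeping rather than substance, is keeping the dual classifications straight: the preceding proposition pairs $\mathfrak{S}$ with the non-maximal primes of $K^{\circ\flat}_\Delta$ (whose quotients are the valuation rings $K^{\circ\flat}_s$) and $\mathfrak{K}$ with the maximal ideals (whose residues are the characteristic-$p$ fields $k_s$), so that localization at $\varpi^\flat$ peels off precisely the $\mathfrak{S}$-indexed primes while annihilating the $\mathfrak{K}$-indexed ones. Modulo this verification, the argument is completely formal, and it runs parallel to the mixed-characteristic case already treated in Section~\ref{section1}.
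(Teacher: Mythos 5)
Your proof is correct and takes essentially the same route as the paper: both realize $K^\flat_\Delta$ as a localization of $K^{\circ\flat}_\Delta$ and then use the preceding classification of $\spec K^{\circ\flat}_\Delta$ to see that exactly the $\mathfrak{S}$-indexed primes $\mathfrak{p}^\flat_s$ survive, with quotient $K^\flat_s$. The only difference is the multiplicative set --- the paper inverts all non-zero divisors (the total-ring-of-fractions description) and keeps the primes consisting of zero divisors, while you invert $\varpi^\flat$ and keep the primes not containing $\varpi^\flat$ --- a minor variant whose survival criterion is somewhat more concrete to check, but the argument is the same.
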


\begin{proof}
    Since $K^\flat_\Delta$ is the total ring of quotients of $K^{\circ\flat}_\Delta$, this is the localization with respect to the multiplicative set of non-zero divisors. Therefore, the prime ideals of this localization are the localization of prime ideals of $K^{\circ\flat}_\Delta$ which are disjoint from the multiplicative subset. Thus, only prime ideals that matter to us must have all their elements as zero divisors, which are $\mathfrak{p}^\flat_s$ for $s\in\mathfrak{S}$, and hence we get a prime ideal  $\mathfrak{P}^\flat_s$ for every $s\in\mathfrak{S}$ with residue field $K^\flat_s$.
\end{proof}

\begin{proposition}
	There is a isomorphism between topological rings $K^\flat_\Delta$ and $\varprojlim\limits_{x\mapsto x^p} K_\Delta$
\end{proposition}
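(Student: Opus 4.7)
The plan is to bootstrap from the multiplicative bijection $K^{\circ\flat}_\Delta \simeq \varprojlim_{x \mapsto x^p} K^\circ_\Delta$ of the earlier lemma by inverting the pseudo-uniformizer on both sides. First, I would observe that componentwise inclusion $\varprojlim K^\circ_\Delta \hookrightarrow \varprojlim K_\Delta$ is well defined (the transition maps $x \mapsto x^p$ are compatible with $K^\circ_\Delta \subset K_\Delta$), yielding an injection $\iota: K^{\circ\flat}_\Delta \hookrightarrow \varprojlim_{x \mapsto x^p} K_\Delta$. The image of $\varpi^\flat$ is the system $(\varpi^{1/p^n})_{n}$, whose components are all units in $K_\Delta$; so $(\varpi^{-1/p^n})_n$ provides an inverse in $\varprojlim K_\Delta$. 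By the universal property of localization, $\iota$ extends to a map $\Phi: K^\flat_\Delta = K^{\circ\flat}_\Delta\bigl[\tfrac{1}{\varpi^\flat}\bigr] \longrightarrow \varprojlim_{x\mapsto x^p} K_\Delta$.

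Next, I would prove surjectivity via a norm estimate: given $x=(x^{(n)})_n \in \varprojlim K_\Delta$, power-multiplicativity of $|\bullet|$ together with $(x^{(n)})^{p^n}=x^{(0)}$ forces $|x^{(n)}| = |x^{(0)}|^{1/p^n}$. Setting $r:=|x^{(0)}|$ and choosing $k \in \mathbb{N}$ with $|\varpi|^k r \leq 1$, the shifted system $(\varpi^\flat)^k \cdot x$ has its $n$-th component of norm at most $(|\varpi|^k r)^{1/p^n} \leq 1$, hence lies in $\varprojlim K^\circ_\Delta \cong K^{\circ\flat}_\Delta$. Therefore $x = \Phi\bigl((\varpi^\flat)^{-k} \cdot ((\varpi^\flat)^k x)\bigr)$, proving surjectivity. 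Injectivity is immediate: by the classification of primes, $\beta^\flat_s(\varpi^\flat) \neq 0$ in $K^{\circ\flat}_s$ for every $s\in\mathfrak{S}$, so $\varpi^\flat$ avoids every minimal prime of the reduced ring $K^{\circ\flat}_\Delta$ and is therefore a non-zero-divisor.

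For the ring structure and topology, multiplication in $\varprojlim_{x\mapsto x^p} K_\Delta$ is componentwise, while addition is defined by the usual Witt-vector-type formula $(x+y)^{(n)} := \lim_{m\to\infty}(x^{(n+m)}+y^{(n+m)})^{p^m}$; to interpret this for a general pair $x,y$, I would clear denominators by a common $(\varpi^\flat)^k$ so that both become integral, reduce to the case already handled in $\varprojlim K^\circ_\Delta$, and then divide back by $(\varpi^\flat)^k$. Independence of the choice of $k$ follows from the analogous statement in $K^{\circ\flat}_\Delta$ together with $\varpi^\flat$ being a non-zero-divisor. With these conventions, $\Phi$ is tautologically a ring homomorphism. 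Both $K^\flat_\Delta$ and $\varprojlim K_\Delta$ are Tate with common ring of definition $K^{\circ\flat}_\Delta$ and ideal of definition $(\varpi^\flat)$, and $\Phi$ restricts to the identity on this integral subring; consequently $\Phi$ is a homeomorphism and thus an isomorphism of topological rings.

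The main obstacle I anticipate is writing down the addition on $\varprojlim_{x\mapsto x^p} K_\Delta$ in a way that is manifestly well defined on the full ring $K_\Delta$ rather than only on the integral subring $K^\circ_\Delta$; the clear-denominators trick above circumvents this, but one has to verify carefully that addition so defined is independent of $k$ and compatible with $\Phi$. Everything else reduces to direct computation once the integral bijection from the earlier lemma and the norm formula $|x^{(n)}|=|x^{(0)}|^{1/p^n}$ are in hand.
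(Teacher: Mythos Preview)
Your proof is correct and follows the same skeleton as the paper: start from the integral bijection $K^{\circ\flat}_\Delta \cong \varprojlim_{x\mapsto x^p} K^\circ_\Delta$, embed into $\varprojlim_{x\mapsto x^p} K_\Delta$, and invert $\varpi^\flat$. The difference is in how the two sides are matched up after inverting. The paper first maps $K^\flat_\Delta$ (defined as the total ring of fractions of $K^{\circ\flat}_\Delta$) into $Q\bigl(\varprojlim K_\Delta\bigr)$, asserts $\bigl(\varprojlim K^\circ_\Delta\bigr)[1/\varpi^\flat]=\varprojlim K_\Delta$ in one line, and then separately shows $\varprojlim K_\Delta$ equals its own total ring of fractions by invoking the prime-ideal classification (Lemma~\ref{lemma2.3}) to conclude every element is a unit or a zero divisor. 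You instead observe directly that $\varpi^\flat$ is already a unit in $\varprojlim K_\Delta$, map into $\varprojlim K_\Delta$ by the universal property of localization, and prove surjectivity by the explicit norm bound $|x^{(n)}|=|x^{(0)}|^{1/p^n}$ (this is exactly what makes the paper's one-line commutation of localization with $\varprojlim$ work). Your route is more self-contained and avoids the detour through $Q(\cdot)$ and the prime classification; the paper's route has the side benefit of recording that $\varprojlim K_\Delta$ is zero-dimensional reduced. You are also more careful than the paper about two points it leaves implicit: the well-definedness of addition on $\varprojlim_{x\mapsto x^p} K_\Delta$ via clearing denominators, and the topological comparison.
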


\begin{proof}
     Note that we have an injective ring homomorphism
     \begin{align*}
     	K^{\circ\flat}_\Delta=\varprojlim\limits_{x\mapsto x^p}K^{\circ}_\Delta\to \varprojlim\limits_{x\mapsto x^p}K_\Delta
     \end{align*}
     This gives an injective ring map to the total ring of fractions
     \begin{align*}
     	K^\flat_\Delta\to Q(\varprojlim\limits_{x\mapsto x^p}K_\Delta)
     \end{align*}
     This map is clearly surjective as for the uniformizer $\varpi\in K_0$, we have a non-zero divisor $\varpi'$ made from compatible $p^n$-th roots of $\varpi$, then $\left(\varprojlim\limits_{\Phi}K^\circ_\Delta\right)\left[\frac{1}{\varpi}\right]=\varprojlim\limits_{\Phi} K^\circ_{\Delta}\left[\frac{1}{\varpi^{1/p^m}}\right]=\varprojlim\limits_\Phi K_\Delta$.
     Thus, it is enough to show every element of $\varprojlim\limits_{x\mapsto x^p}K_\Delta$ is either a unit or a zero divisor.  But we know that every prime ideal of $K^{\circ\flat}_\Delta$ are of the form $\mathfrak{p}^\flat_s:=\varprojlim\limits_{x\to x^p}\mathfrak{p}_s$ for $s\in\mathfrak{S}$, thus we have every prime ideal of $K^\circ_\Delta$ is of the form $\mathfrak{P}^\flat=\varprojlim\limits_{x\to x^p}\mathfrak{P}_s$. This is also a reduced ring, and each prime ideal is maximal; every element is a unit or a zero divisor. Hence is the total ring of fractions.
\end{proof}

Let $K_0$ be a p-adic field and $K_\alpha, \alpha\in\Delta$ be perfectoid fields such that $K_0\subset K_\alpha\subset \C$ for all $\alpha\in\Delta$. Recall that $\mathfrak{S}$ is the equivalence classes of the embedding family defined above. Tilting the perfectoid fields $K_\alpha,K_s, \alpha\in\Delta,s\in\mathfrak{S}$, we obtain the analogous situation in characteristics $p$ i.e. $K^\flat_\alpha, K^\flat_s\subset \C^\flat$ for all $\alpha\in\Delta$ and $s\in\mathfrak{S}$. Using the theorem of Fontaine-Wintenberger \cite{FonWin}, we have isomorphism $G_{K_\alpha}\cong G_{K^\flat_\alpha}$ for all $\alpha\in\Delta$ and $G_{K_s}\cong G_{K^\flat_s}$ for all $s\in\mathfrak{S}$. Consider a non-Archimedean field $K^\flat_0$ of characteristic $p$ such that $K^\flat_0\subset K^\flat_\alpha$ for all $\alpha\in\Delta$ and the set of equivalence classes of embedding families defined analogously for $K^\flat_0\subset K^\flat_\alpha\subset \C^\flat$ is in bijection with $\mathfrak{S}$. Consider $A=K^\flat_1\hat{\otimes}_{K^\flat_0}\cdots\hat{\otimes}_{K^\flat_0} K^\flat_t$ and we have the following proposition:

\begin{proposition}
    The ring $K^\flat_\Delta$ is the reduced ring corresponding to $A$, that is
    \begin{align*}
        K^\flat_\Delta\cong A/\Nil A
    \end{align*}
    where $\Nil A$ is the nilradical of $A$.
\end{proposition}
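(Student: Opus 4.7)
The plan is to construct a natural continuous ring homomorphism $\psi : A \to K^\flat_\Delta$, show it factors through $A/\Nil A$, and verify both surjectivity and injectivity of the induced map. For the construction, I would use the universal property of the completed tensor product. Each $K^\flat_\alpha$ sits inside $K^\flat_\Delta$ via the tilt of the canonical inclusion $\iota_\alpha : K_\alpha \hookrightarrow K_\Delta$ (using the multiplicative bijection $K^\flat_\Delta \cong \varprojlim_{x \mapsto x^p} K_\Delta$). These inclusions agree upon restriction to $K^\flat_0$, so there is a well-defined $K^\flat_0$-algebra homomorphism $K^\flat_1 \otimes_{K^\flat_0} \cdots \otimes_{K^\flat_0} K^\flat_t \to K^\flat_\Delta$ sending $x_1 \otimes \cdots \otimes x_t$ to the product of the images. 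Since $|\cdot|_\flat$ is submultiplicative and the image of $x_\alpha$ has norm $|x_\alpha|_\flat$, this map is continuous for the infimum seminorm on the tensor product, and so extends uniquely to $\psi : A \to K^\flat_\Delta$ by completeness of the target. Because $K^\flat_\Delta$ is reduced (it is the total ring of fractions of the reduced ring $K^{\circ\flat}_\Delta$), we have $\Nil A \subseteq \ker \psi$, yielding $\bar\psi : A/\Nil A \to K^\flat_\Delta$.

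For surjectivity, I would argue first at the integral level. Given $y = (y^{(n)})_n \in K^{\circ\flat}_\Delta$, each component $y^{(n)} \in K^\circ_\Delta$ is a limit of finite sums of elementary tensors from the $\mathcal{O}_{K_\alpha}$'s. Since $K^\flat_\alpha$ is identified with $\varprojlim_{x \mapsto x^p} \mathcal{O}_{K_\alpha}$, one can use the multiplicative lifts $x^\#$ to choose compatible approximants across the Frobenius tower, producing elements of the integral version $A^\circ := K^{\flat\circ}_1 \hat\otimes_{K^{\flat\circ}_0} \cdots \hat\otimes_{K^{\flat\circ}_0} K^{\flat\circ}_t$ that map close to $y$. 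Combined with continuity of $\psi$ and completeness of both rings (with respect to the pseudo-uniformizing ideal $I^\flat_\varpi$), this gives density of the image of $A^\circ$, and then closedness of the image follows from a standard successive approximation in the $I^\flat_\varpi$-adic topology. Inverting $\varpi^\flat$ then yields surjectivity of $\bar\psi$ onto $K^\flat_\Delta$.

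For injectivity, I would use the classification of primes. Applying the reasoning of Section~\ref{section1} mutatis mutandis to the equicharacteristic setting (taking the quotient $A/\Nil A$ first so that the ring is reduced), the prime ideals of $A/\Nil A$ correspond bijectively to equivalence classes of embedding families $(K^\flat_\alpha \to \widehat{\overline{K^\flat_0}})$, which by hypothesis is in bijection with $\mathfrak{S}$; moreover, the residue field at the prime corresponding to $s \in \mathfrak{S}$ is precisely $K^\flat_s$, the same as the residue field of $K^\flat_\Delta$ at $\mathfrak{P}_s$. The map $\bar\psi$ respects this matching by construction, so it is injective on each residue field. Since $A/\Nil A$ is reduced (hence the intersection of its primes is zero) and $\bar\psi$ is bijective on $\Spec$ with matching residue fields, any element of $\ker \bar\psi$ would vanish at every prime and therefore be zero, completing the proof.

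The main obstacle I anticipate is the surjectivity step: the need to lift finite-tensor approximations of $y^{(n)} \in K_\Delta$ to \emph{compatible} sequences defining elements of the completed tilt $A$ is delicate, as the existence of a compatible system of $p$-th roots for each tensor factor does not automatically produce one for the approximating sum. Exploiting the perfectness of each $K^\flat_\alpha$ and the Frobenius-equivariance of the inverse system is essential; the injectivity step, by contrast, reduces cleanly to tracking prime ideals and residue fields through the tilting correspondence.
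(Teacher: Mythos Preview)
Your approach is essentially the same as the paper's: both construct the natural map $A\to K^\flat_\Delta$ (the paper via the explicit formula $\sum (x^{(n)}_{1i})_n\otimes\cdots\otimes(x^{(n)}_{ti})_n\mapsto\sum (x^{(n)}_{1i}\otimes\cdots\otimes x^{(n)}_{ti})_n$, you via the universal property---these are the same homomorphism), and both identify the kernel with $\Nil A$ by using the hypothesis that the primes of $A$ are indexed by $\mathfrak{S}$ and checking that the map matches $\mathfrak{Q}_s$ with $\mathfrak{P}_s$. The paper simply asserts surjectivity from the explicit formula and then argues that $x\in\ker\theta$ implies $x\in\theta^{-1}\bigl(\bigcap_s\mathfrak{P}_s\bigr)=\bigcap_s\mathfrak{Q}_s=\Nil A$; your kernel argument is the same statement unpacked. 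The one place you go beyond the paper is in worrying about surjectivity---the paper does not address the compatibility-of-$p$-th-roots issue you flag, so your outline is if anything more honest about the delicate point, but the strategy and key lemma are identical.
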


\begin{proof}
    Consider the surjective ring homomorphism
    \begin{align*}
        \theta: A&\to K^\flat_\Delta\\
        \sum\limits_{i=0}^\infty (x^{(n)}_{1i})_n\otimes\cdots\otimes (x^{(n)}_{ti})_n&\mapsto \sum\limits_{i=1}^\infty (x^{(n)}_{1i}\otimes\cdots\otimes x^{(n)}_{ti})_n
    \end{align*}
    and by assumption all prime ideals of $A$ are $\mathfrak{Q}_s=\ker\gamma_s$ with embeddings $(\gamma_\alpha: K^\flat_\alpha\to K_s)_{\alpha\in\Delta}$ for $s\in\mathfrak{S}$. We can define the bijection such that $\theta^{-1}(\mathfrak{P}_s)=\mathfrak{Q}_s$ because the inverse image of a prime ideal is a prime ideal. Let $x\in\ker\theta$, we have $\theta(x)=0\in\bigcap\limits_{s\in\mathfrak{S}}\mathfrak{P}_s$. Thus 
    \begin{align*}
        x\in \theta^{-1}\left(\bigcap\limits_{s\in\mathfrak{S}}\mathfrak{P}_s\right)=\bigcap\limits_{s\in\mathfrak{S}}\mathfrak{Q}_s.
    \end{align*}
    But the intersection of all prime ideals is the nilradical of $A$, and hence we get the desired isomorphism.
\end{proof}

\begin{remark}
   A similar result appears in Proposition 4.3 of \cite{Bri}, where the author consider the $p$-adic completion of the ring $\mathcal{O}_{\C} \otimes_{W(k_0)} \cdots \otimes_{W(k_0)} \mathcal{O}_{\C}$ and the $I^\flat_p$-completion of $\mathcal{O}^\flat_{\C} \otimes_{k_0} \cdots \otimes_{k_0} \mathcal{O}^\flat_{\C}$. However, we will not be concerned with the existence of $K^{\flat}_0$ in this article. The explicit structure of the Tilt will not play a role in this article.
\end{remark}

The following example shows that tilts of $\Delta$-perfectoid fields are not immediately recoverable from tilts of the individual perfectoid fields. This means that the corresponding Fargues-Fontaine curves, which parametrize the untilts, are essentially different.

\begin{example}
Consider $K_1 = \widehat{\mathbb{Q}_p(p^{1/p^\infty})}$ and $K_2 = \widehat{\mathbb{Q}_p(\mu_{p^\infty})}$ ($p \neq 2$), both of which are perfectoid fields over $K_0 = \mathbb{Q}_p$. Take $K_\Delta = K_1 \widehat{\otimes}_{\mathbb{Q}_p} K_2 = \widehat{\mathbb{Q}_p(p^{1/p^\infty}, \mu_{p^\infty})}$, as they are linearly disjoint. In contrast, consider $K'_\Delta = K_1 \widehat{\otimes}_{\mathbb{Q}_p} K_1 = \widehat{\mathbb{Q}_p(p^{1/p^\infty})} \widehat{\otimes}_{\mathbb{Q}_p} \widehat{\mathbb{Q}_p(p^{1/p^\infty})}$, which contains zero divisors such as $p^{1/p} \otimes 1 - 1 \otimes p^{1/p}$.

However, note that both $K_1$ and $K_2$ have isomorphic tilts: $K_1^\flat = \mathbb{F}_p[[t_1^{1/p^\infty}]]\left[\frac{1}{t_1}\right]$ (by identifying $(p, p^{1/p}, p^{1/p^2}, \dots)$ with $t_1$) and $K_2^\flat = \mathbb{F}_p[[t_2^{1/p^\infty}]]\left[\frac{1}{t_2}\right]$ (by identifying $(1, \zeta_p, \zeta_{p^2}, \dots)-1$ with $t_2$).

Therefore, the tilts of perfectoid $\Delta-$fields and perfectoid fields are essentially different. If we want to prove that $K_\Delta^\flat = \mathbb{F}_p[[t_1^{1/p^\infty}]]\left[\frac{1}{t_1}\right] \widehat{\otimes}_{\mathbb{F}_p} \mathbb{F}_p[[t_2^{1/p^\infty}]]\left[\frac{1}{t_2}\right]$, as in \cite{Bri}, then we need to see $t_1$ and $t_2$ inside some bigger field.

Note that since $K_\Delta$ is a field, a map exists from
\begin{align*}
\mathbb{F}_p[[t_1^{1/p^\infty}]]\left[\frac{1}{t_1}\right] \widehat{\otimes}_{\mathbb{F}_p} \mathbb{F}_p[[t_2^{1/p^\infty}]]\left[\frac{1}{t_2}\right] = \mathbb{F}_p[[t_1^{1/p^\infty}, t_2^{1/p^\infty}]]\left[\frac{1}{t_1}, \frac{1}{t_2}\right] \to K_\Delta^\flat.
\end{align*}
This ring is considered in \cite{Zab, CarKedZab, Pal-Zab}, but it cannot be the tilt because it has rank-2 valuations (if $t_1$ and $t_2$ are not contained inside a perfectoid field), which is not possible for a perfectoid field. In the latter case, we have a surjective map
\begin{align*}
\mathbb{F}_p[[t^{1/p^\infty}_1]]\left[\frac{1}{t_1}\right]\hat{\otimes}_{\mathbb{F}_p}\mathbb{F}_p[[t^{1/p^\infty}_2]]\left[\frac{1}{t_2}\right]\to K'^{\flat}_\Delta.
\end{align*}
\end{example}


\section{Non-Archimedean functional analysis over $K_\Delta$}

Throughout the section, we fix a perfectoid $\Delta-$field $K_\Delta$. We recall the following definition from the classical ring theory \cite{Goode}.

\begin{definition}
    A commutative ring $R$ with 1 is called a \emph{von Neumann regular (VNR)} ring if for every $x\in R$ there exists $y\in R$ such that $x^2y=x$.
\end{definition}

\begin{proposition}
    The following conditions are equivalent for a commutative ring $R$ with $1$:
    \begin{enumerate}
        \item[(i).] $R$ is VNR;
        \item[(ii).] Every principal ideal of $R$ is generated by an idempotent element;
        \item[(iii).] Every finitely generated ideal of $R$ is generated by an idempotent;
        \item[(iv).] Every principal ideal is a direct summand of $R$;
        \item[(v).] Every finitely generated submodule of a projective $R-$module $P$ is a direct summand of $P$;
        \item[(vi).] $R$ is absolutely flat, i.e., every module over $R$ is flat;
        \item[(vii).] $R$ is a reduced ring of dimension $0$;
        \item[(viii).] Every localization of $R$ at a maximal ideal is a field.
    \end{enumerate}
\end{proposition}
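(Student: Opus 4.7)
The plan is to verify the eight conditions form a cycle of equivalences in two clusters. The first cluster is the purely algebraic/module-theoretic one, namely (i)–(v), which I would show are equivalent by direct manipulation with idempotents. The second cluster is the ``local/geometric'' one consisting of (vi), (vii), (viii), which I would show are equivalent by reducing them to local statements. The bridge between the two clusters will be the equivalence (i) $\Leftrightarrow$ (viii).

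For the first cluster, the key computation is that if $x^2 y = x$, then $e := xy$ is idempotent, since $e^2 = xyxy = (x^2 y)y = xy = e$, and $(x) = (e)$ because $x = x^2 y = x \cdot e$ and $e = x \cdot y$. This yields (i) $\Rightarrow$ (ii). Conversely, if $(x) = (e)$ for $e$ idempotent, write $x = er$ and $e = xs$; then $x^2 s = x(xs) = xe = ere \cdot = er = x$, giving (ii) $\Rightarrow$ (i). For (ii) $\Rightarrow$ (iii) I would induct on the number of generators using the identity $(e_1) + (e_2) = (e_1 + e_2 - e_1 e_2)$, noting $e_1 + e_2 - e_1 e_2$ is idempotent when $e_1, e_2$ are. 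The equivalences (ii) $\Leftrightarrow$ (iv) and (iv) $\Leftrightarrow$ (v) are formal: an ideal $(e)$ with $e$ idempotent splits $R$ as $R = eR \oplus (1-e)R$, and conversely a splitting $\pi \colon R \to (x)$ produces the idempotent $\pi(1)$; the passage to arbitrary projectives is the standard consequence after embedding $P$ into a free module.

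For the second cluster, I would observe that all of (vi), (vii), (viii) are stable under localization and satisfy a local-to-global principle (flatness is a local property; being reduced and zero-dimensional can be checked at each prime). Thus I reduce to the local case and show that a local ring $(R, \mathfrak{m})$ satisfies (vi), (vii), or (viii) iff it is a field. For (viii) $\Leftrightarrow$ (vii): a local zero-dimensional ring has $\mathfrak{m}$ as its unique prime, so reducedness says $\mathfrak{m} = \mathrm{Nil}(R) = 0$. For (vi) $\Leftrightarrow$ (viii): absolute flatness localizes, and a local ring is absolutely flat iff every cyclic module $R/(x)$ is flat, which forces $(x) = (x)^2$ and then Nakayama forces $(x) = 0$ or $(x) = R$.

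Finally, I bridge via (i) $\Leftrightarrow$ (viii). For (i) $\Rightarrow$ (viii), localize at $\mathfrak{m}$; given $x$ with $x(xy - 1) = 0$, either $xy - 1 \in \mathfrak{m}$, in which case $xy$ is a unit and so is $x$, or $xy - 1$ is a unit, forcing $x = 0$. For (viii) $\Rightarrow$ (i), fix $x \in R$ and consider the colon ideal $J := \{r \in R : xr \in (x^2)\}$; it suffices to show $J$ lies in no maximal ideal $\mathfrak{m}$. Since $R_{\mathfrak{m}}$ is a field, $x/1$ is zero or a unit. In either case one produces $t \notin \mathfrak{m}$ with $xt \in (x^2)$ (using that $(x)$ and $(x^2)$ generate the same ideal locally when $x/1$ is a unit, and that $xs = 0$ for some $s \notin \mathfrak{m}$ when $x/1 = 0$). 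The main obstacle I anticipate is this last implication (viii) $\Rightarrow$ (i): globalizing the local information requires carefully clearing denominators to produce a single element $y$ with $x^2 y = x$, and this is the step where one must not merely invoke that the condition ``holds locally'' but actually extract the witness $y$ from the local data.
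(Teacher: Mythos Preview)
Your proposal is correct. The paper itself does not give a proof of this proposition at all: its entire argument is the single line ``See \cite{Goode}'', deferring to a standard reference on von Neumann regular rings. Your sketch therefore goes well beyond what the paper provides.

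Your two-cluster strategy with the bridge (i) $\Leftrightarrow$ (viii) is a clean and standard organization. The idempotent calculations for (i)--(iv) are right, and the colon-ideal argument for (viii) $\Rightarrow$ (i) is exactly the correct way to globalize the local witnesses; you have already identified the only genuinely delicate point. One small remark: the implication (iv) $\Rightarrow$ (v), which you call ``the standard consequence after embedding $P$ into a free module,'' needs a bit more than a single sentence. The usual route is an induction on the rank of a finitely generated free module containing the given finitely generated submodule $N$: project to one coordinate, use (iii) to split off the image as an idempotent-generated summand, and apply the induction hypothesis to the kernel. This is routine but not quite ``formal,'' so if you write it out you should include that induction explicitly.
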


\begin{proof}
    See \cite{Goode}.
\end{proof}

Since the ring $K_\Delta$ is a reduced ring of dimension $0$, it is a VNR ring. Therefore, every module over $K_\Delta$ is a flat module. 

\begin{lemma}
  For every $s\in\mathfrak{S}$, the localization $(K_\Delta)_{\mathfrak{P}_s}$ is isomorphic to $K_s$.
\end{lemma}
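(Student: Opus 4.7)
The plan is to exploit the von Neumann regular (VNR) structure of $K_\Delta$ established in the previous proposition. By the classification of primes carried out in Section~\ref{section1}, every prime ideal of $K_\Delta$ is maximal, so $\dim K_\Delta = 0$; combined with the reducedness of $K_\Delta$ (proved earlier in that section), condition (vii) of the preceding proposition gives that $K_\Delta$ is VNR. Consequently, by condition (viii) of the same proposition, the localization $(K_\Delta)_{\mathfrak{P}_s}$ is already a field. This observation is the crucial reduction, since it short-circuits any direct computation of the localization, which would be awkward given that $K_\Delta$ is non-Noetherian.

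To identify this field with $K_s$, I would next build a canonical comparison map via the universal property of localization. The surjective ring homomorphism $\beta_s: K_\Delta \to K_s$ has kernel exactly $\mathfrak{P}_s$, so for every $a \in K_\Delta \setminus \mathfrak{P}_s$ the image $\beta_s(a)$ is a nonzero element of the field $K_s$, hence a unit. The universal property then yields a unique factorization
\[
K_\Delta \to (K_\Delta)_{\mathfrak{P}_s} \xrightarrow{\tilde{\beta}_s} K_s.
\]
Finally, I would verify that $\tilde{\beta}_s$ is an isomorphism. Surjectivity is immediate from the surjectivity of $\beta_s$. For injectivity, since $(K_\Delta)_{\mathfrak{P}_s}$ is a field and $\tilde{\beta}_s \neq 0$ (it sends $1$ to $1$), its kernel is forced to be the zero ideal, so $\tilde{\beta}_s$ is also injective.

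I do not anticipate any genuine obstacle: once the VNR characterization of the previous proposition is in hand, the argument reduces to the universal property of localization applied to the map $\beta_s$ and a one-line check that a nonzero map out of a field is injective. The only conceptual point to keep in mind is that the VNR viewpoint is essential — attempting to manipulate $(K_\Delta)_{\mathfrak{P}_s}$ by hand using completed tensor products would obscure the fact that localization at a maximal ideal coincides with the residue field.
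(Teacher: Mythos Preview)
Your proposal is correct and follows essentially the same approach as the paper: both construct the natural map $(K_\Delta)_{\mathfrak{P}_s}\to K_s$ induced by $\beta_s$ (the paper writes it explicitly as $\frac{a}{b}\mapsto\frac{\beta_s(a)}{\beta_s(b)}$, you invoke the universal property), note surjectivity from that of $\beta_s$, and obtain injectivity from the fact that $(K_\Delta)_{\mathfrak{P}_s}$ is a field by the VNR property.
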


\begin{proof}
    For each $s\in\mathfrak{S}$, we have a well defined map
    \begin{align*}
        (K_\Delta)_{\mathfrak{P}_s}&\to K_s\\
                        \frac{a}{b}&\mapsto \frac{\beta_s(a)}{\beta_s(b)}
    \end{align*}
    Indeed, if $b\not\in\mathfrak{P}_s$, then $\beta_s(b)\neq 0$ and hence above map is well defined. It is also surjective. Since the ring $K_\Delta$ is VNR, this map is also injective as $(K_\Delta)_{\mathfrak{P}_s}$ is a field.
\end{proof}

\begin{lemma}
    The localizations $(K^\circ_\Delta)_{\mathfrak{p}_s}$ is isomorphic to $K_s$ for all $s\in\mathfrak{S}$ and $(K^\circ_\Delta)_{\mathfrak{m}_s}$ is isomorphic to $K^\circ_s$ for all $s\in\mathfrak{K}$.
\end{lemma}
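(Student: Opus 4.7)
The plan is to construct explicit maps via the $\beta_s$ and reduce to the previous lemma for part (i), and to reducedness of $K^\circ_\Delta$ for part (ii).

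For the first isomorphism, observe that $\beta_s(\varpi)\neq 0$ in $K^\circ_s$, so the pseudo-uniformizer $\varpi$ of $K_0$ is not in $\mathfrak{p}_s$. Consequently the localization at $\mathfrak{p}_s$ already inverts $\varpi$, and the multiplicative set $K^\circ_\Delta\setminus \mathfrak{p}_s$ generates inside $K_\Delta=K^\circ_\Delta[1/\varpi]$ precisely the set $K_\Delta\setminus\mathfrak{P}_s$, where $\mathfrak{P}_s=\mathfrak{p}_s\cdot K_\Delta$. Thus
\[
  (K^\circ_\Delta)_{\mathfrak{p}_s}\;=\;(K_\Delta)_{\mathfrak{P}_s}\;\cong\; K_s,
\]
the last isomorphism being the one supplied by the preceding lemma.

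For the second isomorphism, fix $s'\in\mathfrak{S}$ whose residual class is $s\in\mathfrak{K}$. The classification of maximal ideals yields $\beta_{s'}^{-1}(\mathfrak{m}_{K^\circ_{s'}})=\mathfrak{m}_s$, so $\beta_{s'}\colon K^\circ_\Delta\to K^\circ_{s'}$ is local and sends $K^\circ_\Delta\setminus \mathfrak{m}_s$ into units of the valuation ring $K^\circ_{s'}$. The universal property of localization then produces a surjective ring map
\[
  \varphi\colon (K^\circ_\Delta)_{\mathfrak{m}_s}\twoheadrightarrow K^\circ_{s'}\cong K^\circ_s,
\]
whose kernel is the extended ideal $\mathfrak{p}_{s'}\cdot (K^\circ_\Delta)_{\mathfrak{m}_s}$. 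Since $K^\circ_\Delta$ (and hence its localization) is reduced, this extension vanishes as soon as $\mathfrak{p}_{s'}$ is the unique minimal prime of $K^\circ_\Delta$ contained in $\mathfrak{m}_s$: the localization would then possess a single minimal prime, which must be $(0)$ by reducedness, giving injectivity.

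The main obstacle is establishing this uniqueness of lifts, i.e.\ that each residual embedding family $s\in\mathfrak{K}$ comes from a unique equivalence class in $\mathfrak{S}$. Without it, the localization $(K^\circ_\Delta)_{\mathfrak{m}_s}$ would contain several minimal primes and could not even be a domain, let alone be isomorphic to the valuation ring $K^\circ_s$. My strategy is a Hensel-style argument exploiting the perfectoid structure of each $K_\alpha$: a residual embedding $k_\alpha\to k_s$ should lift uniquely to an embedding $K_\alpha\hookrightarrow K_s$ because both fields are perfectoid and the tilt of a perfectoid field is determined up to canonical isomorphism by its mod-$\varpi$ reduction, which is controlled by the residual data. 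Completing this lifting step is what I expect to be the delicate technical point of the proof.
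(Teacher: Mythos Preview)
Your treatment of the first isomorphism is correct and close to the paper's: you factor through $(K_\Delta)_{\mathfrak{P}_s}$ and invoke the preceding lemma, while the paper writes down $\phi_s(a/b)=\beta_s(a)/\beta_s(b)$ directly and identifies its kernel with $(\mathfrak{p}_s)_{\mathfrak{p}_s}=0$. These come to the same thing.

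For the second isomorphism your setup is right, but you have made the injectivity step much harder than necessary. You do \emph{not} need to establish that the surjection $\mathfrak{S}\to\mathfrak{K}$ is a bijection, nor any Hensel-type lifting of residual embeddings to perfectoid embeddings. What is needed is only that the extension $\mathfrak{p}_{s'}\cdot(K^\circ_\Delta)_{\mathfrak{m}_s}$ vanishes, and this follows directly from the von Neumann regularity of $K_\Delta$ already established. Given $a\in\mathfrak{p}_{s'}$, choose $y\in K_\Delta$ with $a^2y=a$ and set $e=ay$; then $e$ is an idempotent with $(1-e)a=0$ and $\beta_{s'}(1-e)=1$. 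Idempotents of $K_\Delta$ satisfy $|\,\cdot\,|_t\in\{0,1\}$ for every $t\in\mathfrak{S}$, so $1-e\in K^\circ_\Delta$, and $|\beta_{s'}(1-e)|=1$ forces $1-e\notin\mathfrak{m}_s$. Hence $a/1=0$ in $(K^\circ_\Delta)_{\mathfrak{m}_s}$ and $\ker\varphi=0$. The paper's own proof is terse at exactly this point (it produces an annihilator $x\notin\mathfrak{p}_{s'}$ from part~(i) and then asserts $\ker\Phi_s=0$); the idempotent refinement above is how one upgrades that $x$ to an annihilator lying outside $\mathfrak{m}_s$.

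In particular, the uniqueness of the minimal prime beneath $\mathfrak{m}_s$ (equivalently, injectivity of $\mathfrak{S}\to\mathfrak{K}$) is a \emph{consequence} of the lemma, not a prerequisite. Your proposed perfectoid lifting argument would be a significant and unnecessary detour.
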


\begin{proof}
   For $s\in\mathfrak{S}$, consider the well-defined map:
\begin{align*}
  \phi_s: (K^\circ_\Delta)_{\mathfrak{p}_s}&\to K_s\\
  \frac{a}{b}&\mapsto \frac{\beta_s(a)}{\beta_s(b)}
\end{align*}
Note that $\ker\phi_s=\Big\{\frac{a}{b}\in (K^\circ_\Delta)_{\mathfrak{p}_s}\mid\beta_s(a)=0\Big\}=\Big\{\frac{a}{b}\mid a\in\mathfrak{p}_s \text{ and }b\not\in\mathfrak{p}_s\Big\}=(\mathfrak{p}_s)_{\mathfrak{p}_s}$. However, since $\mathfrak{p}_s=\mathfrak{P}_s\cap K^\circ_\Delta$ for all $s\in\mathfrak{S}$, we have $(\mathfrak{p}_s)_{\mathfrak{p}_s}=0$. Hence, the above homomorphism is injective. It is also surjective, and therefore an isomorphism.

Now, for $(K^\circ_\Delta)_{\mathfrak{m}_s}$, consider the homomorphism:
\begin{align*}
  \Phi_s:(K^\circ_\Delta)_{\mathfrak{m}_s}&\to K^\circ_s\\
  \frac{a}{b}&\mapsto \frac{\beta_s(a)}{\beta_s(b)}
\end{align*}
This map is clearly surjective. Let $\frac{a}{b}\in\ker\Phi_s$. Then $a\in\ker\beta_s=\mathfrak{p}_s$, which implies $a$ is a zero divisor. There exists $x\not\in\mathfrak{p}_s$ such that $ax=0$, as this is precisely what the statement $(\mathfrak{p}_s)_{\mathfrak{p}_s}=0$ translates to. Therefore, $\ker\Phi_s=0$.
\end{proof}

Supoose $M$ is a $K_\Delta-$module. Since $K_\Delta$ is a VNR ring, $M$ is a flat $K_\Delta-$module. Therefore, from the short exact sequence
\[
\begin{tikzcd}
    0\arrow[r] & \mathfrak{P}_s\arrow[r]&K_\Delta\arrow[r,"\beta_s"] & K_s\arrow[r] & 0,
\end{tikzcd}
\]
we obtain a short exact sequence
\[
\begin{tikzcd}
    0\arrow[r]& \mathfrak{P}_s\otimes_{K_\Delta} M\arrow[r] & M\cong K_\Delta\otimes_{K_\Delta} M\arrow[r]& M_s=M\otimes_{K_\Delta} K_s\arrow[r]&0.
\end{tikzcd}
\]
Also since $(K_\Delta)_{\mathfrak{P_s}}\cong K_s$, we must have $M_{\mathfrak{P}_s}:=M\otimes_{K_\Delta} (K_\Delta)_{\mathfrak{P}_s}\cong M\otimes_{K_\Delta}K_s=:M_s$.\\

Similarly, if $N$ is a $K^\circ_\Delta-$module, then $N_{\mathfrak{p}_s}:=N\otimes_{K^\circ_\Delta}(K^\circ_\Delta)_{\mathfrak{p}_s}\cong N\otimes_{K^\circ_\Delta}K_s$ and $N_{\mathfrak{m}_s}:=N\otimes_{K^\circ_\Delta}(K^\circ_\Delta)_{\mathfrak{m}_s}\cong N\otimes_{K^\circ_\Delta} K^\circ_s$. Therefore, by the local criterion, the flatness gives the following proposition.

\begin{proposition}[\label{Flatness criterion}]
	A module $M$ over $K^\circ_\Delta$ is flat if and only if $M$ is torsion-free.
\end{proposition}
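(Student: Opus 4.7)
The plan is to invoke the local flatness criterion together with the classification of primes and localizations of $K^\circ_\Delta$ established earlier in this section. Recall that the primes of $K^\circ_\Delta$ are the minimal primes $\mathfrak{p}_s$ for $s\in\mathfrak{S}$, whose localizations are the fields $K_s$, together with the maximal ideals $\mathfrak{m}_s$ for $s\in\mathfrak{K}$, whose localizations are the valuation rings $K^\circ_s$. Over a field every module is flat, and over a valuation ring a module is flat if and only if it is torsion-free; combined with the local flatness criterion (which holds without any noetherian hypothesis), this reduces the whole statement to comparing torsion-freeness of $M$ over $K^\circ_\Delta$ with torsion-freeness of each localization $M_{\mathfrak{m}_s}$ over $K^\circ_s$.

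The forward implication is standard: for any non-zero-divisor $r\in K^\circ_\Delta$, the sequence $0\to K^\circ_\Delta\xrightarrow{r}K^\circ_\Delta$ is exact, and tensoring with the flat module $M$ preserves exactness, so multiplication by $r$ is injective on $M$.

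For the converse I would use the characterization of torsion-freeness via the total ring of fractions. Since $K_\Delta$ is the total ring of fractions of $K^\circ_\Delta$, torsion-freeness of $M$ is equivalent to the injectivity of $M\hookrightarrow M\otimes_{K^\circ_\Delta}K_\Delta$. Localizing at $\mathfrak{m}_s$ is exact, so it produces an injection
\begin{align*}
M_{\mathfrak{m}_s}\hookrightarrow (M\otimes_{K^\circ_\Delta}K_\Delta)_{\mathfrak{m}_s}\cong M_{\mathfrak{m}_s}\otimes_{K^\circ_s}(K_\Delta)_{\mathfrak{m}_s}\cong M_{\mathfrak{m}_s}\otimes_{K^\circ_s}K_s,
\end{align*}
where the final identification uses $(K_\Delta)_{\mathfrak{m}_s}=(K^\circ_\Delta[1/\varpi])_{\mathfrak{m}_s}\cong K^\circ_s[1/\varpi]=K_s=\Frac(K^\circ_s)$. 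This injection is precisely the statement that $M_{\mathfrak{m}_s}$ is torsion-free over $K^\circ_s$, which the valuation ring fact then promotes to flatness, and the local criterion concludes that $M$ is flat over $K^\circ_\Delta$.

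The main point of care is that $K^\circ_\Delta$ has many minimal primes, so a non-zero-divisor of $K^\circ_s$ need not lift to a non-zero-divisor of $K^\circ_\Delta$; an element-wise argument chasing multiplication by individual non-zero-divisors would therefore be awkward. Reformulating torsion-freeness through the total ring of fractions, and exploiting the clean identification $(K_\Delta)_{\mathfrak{m}_s}=K_s$, is what makes the localization step go through cleanly.
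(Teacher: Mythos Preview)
Your proof is correct and follows essentially the same approach as the paper: both invoke the local flatness criterion at the maximal ideals $\mathfrak{m}_s$, use that $(K^\circ_\Delta)_{\mathfrak{m}_s}\cong K^\circ_s$ is a valuation ring where flatness equals torsion-freeness, and argue that torsion-freeness is a local property via the total ring of fractions $K_\Delta$. Your version is in fact more explicit than the paper's, which simply asserts that ``being torsion-free is a local property'' because $K_\Delta$ is absolutely flat, whereas you carefully justify the identification $(K_\Delta)_{\mathfrak{m}_s}\cong K_s$ and spell out why the localized map stays injective.
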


\begin{proof}
 We know that a module $M$ over $K^\circ_\Delta$ is flat if and only if the $(K^\circ_\Delta)_{\mathfrak{m}_s}$-module $M_{\mathfrak{m}_s}$ is flat for every $s \in \mathfrak{K}$. Since $K_\Delta$ is the total ring of fractions of $K^\circ_\Delta$, which is absolutely flat, it follows that being torsion-free is a local property. Therefore, using the flatness criterion for the valuation ring $(K^\circ_\Delta)_{\mathfrak{m}_s}$, we obtain the result.
\end{proof}

\begin{definition}
	A \emph{normed space over $K_\Delta$} is a $K_\Delta$-module $M$ equipped with a family of maps, called a family of norms,
\begin{align*}
    \|\bullet\|_s: M &\to \mathbb{R}_{\geq 0}
\end{align*}
indexed by $s \in \mathfrak{S}$, satisfying the following identities: for all $x,y\in M$ and $\lambda\in K_\Delta$, we have
\begin{enumerate}
    \item[(i)] $\|x + y\|_s \leq \max\{\|x\|_s, \|y\|_s\}$,
    \item[(ii)] $\|\lambda x\|_s = |\lambda|_s \|x\|_s$, where $|\bullet|_s=|\beta_s(\bullet)|$,
    \item[(iii)] $\bigcap\limits_{s \in \mathfrak{S}} \ker \|\bullet\|_s = \{0\}$, where $\ker \|\bullet\|_s := \{x \in M \mid \|x\|_s = 0\}$.
\end{enumerate}
We define the \emph{norm} of $M$ as $\|x\| = \sup\limits_{s \in \mathfrak{S}} \|x\|_s$. This defines a metric
\begin{align*}
    d(x, y) = \|x - y\|
\end{align*}
which induces a topology on $M$.

Note that if $\|x\| = \sup\limits_{s \in \mathfrak{S}} \|x\|_s = 0$, then $\|x\|_s = 0$ for all $s \in \mathfrak{S}$. By (iii), it follows that $x = 0$. A \emph{morphism of normed spaces} is a continuous $K_\Delta$-module homomorphism.

A \emph{Banach space} over $K_\Delta$ is a norm space $M$ that is complete with respect to the above metric.
\end{definition}

\begin{theorem}{\emph{(Open Mapping Theorem)}}\label{Open Mapping Theorem}
	Let $T:E\to F$ be a surjective morphism of Banach spaces. Then $T$ is an open mapping, i.e., if $U$ is open in $E$, then $T(U)\subset F$ is also open.
\end{theorem}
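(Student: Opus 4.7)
The plan is to follow the classical non-Archimedean Banach--Schauder scheme, adapted to the sup-of-sub-norms structure of Banach spaces over $K_\Delta$. The crucial preliminary observation is that a uniformizer $\varpi_0\in K_0\subset K_\Delta$ satisfies $|\varpi_0|_s=|\varpi_0|=:c<1$ \emph{uniformly} in $s\in\mathfrak{S}$: every embedding family is $K_0$-linear, so $\beta_s(\varpi_0)=\varpi_0$ in $K_s$. Consequently, multiplication by $\varpi_0$ rescales each individual sub-norm $\|\cdot\|_s$, and hence the sup-norm $\|\cdot\|$, by the same constant $c$, which is what will let the classical scaling arguments go through. Write $B_r=\{x\in E:\|x\|\le r\}$ and, likewise, $B^F_r\subset F$.

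First I would invoke Baire category. Since $E=\bigcup_{n\ge 0}\varpi_0^{-n}B_1$ and $T$ is surjective, $F=\bigcup_{n\ge 0}\varpi_0^{-n}T(B_1)$, and completeness of $F$ forces some $\overline{\varpi_0^{-n}T(B_1)}=\varpi_0^{-n}\overline{T(B_1)}$ to have non-empty interior, so $\overline{T(B_1)}$ does too. Next, the non-Archimedean triangle inequality makes $B_1\subset E$ closed under addition and negation, so $T(B_1)$, and hence its closure, is an additive subgroup of $F$. A topological subgroup with an interior point is open, which upgrades the conclusion to $\overline{T(B_1)}\supset B^F_r$ for some $r>0$.

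The crux is to promote this to $T(B_1)\supset B^F_r$, using uniform scaling by $\varpi_0$ together with the completeness of $E$. Given $y$ with $\|y\|<r$, I would pick $x_1\in B_1$ with $\|y-Tx_1\|<rc$, then exploit $\varpi_0^{-1}(y-Tx_1)\in B^F_r$ to find $x_2\in B_1$ with $\|y-Tx_1-\varpi_0 Tx_2\|<rc^2$; iterating produces $x_n\in B_1$ satisfying $\|y-\sum_{k=1}^n\varpi_0^{k-1}Tx_k\|<rc^n$. Because $\|\varpi_0^{n-1}x_n\|\le c^{n-1}\to 0$, the non-Archimedean triangle inequality and completeness of $E$ make the series $x=\sum_{n\ge 1}\varpi_0^{n-1}x_n$ converge with $\|x\|\le 1$, and continuity of $T$ gives $Tx=y$. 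Openness is then immediate: for $U\subset E$ open and $u\in U$, choose $N\ge 0$ with $u+\varpi_0^N B_1\subset U$; then $T(U)\supset Tu+\varpi_0^N T(B_1)\supset Tu+B^F_{rc^N}$.

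The main obstacle I anticipate is precisely this uniform-scaling step inside the iteration. For a generic pseudo-uniformizer $\varpi\in K^\circ_\Delta$ the values $|\varpi|_s$ can differ across $s\in\mathfrak{S}$, so dividing by $\varpi$ would rescale the sub-norms by different amounts and the geometric control $\|\cdot\|<rc^n$ would be lost at some $s$. Leveraging that $\varpi_0\in K_0$ is fixed by every embedding $\beta_s$, and therefore has constant sub-norms $|\varpi_0|_s=c$, is what rescues the argument; once this is in place the remainder of the proof is cosmetic compared with the classical case.
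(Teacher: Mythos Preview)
Your proposal is correct and follows essentially the same route as the paper's proof: both hinge on choosing a pseudo-uniformizer with \emph{constant} sub-norms across $\mathfrak{S}$ (you take $\varpi_0\in K_0$, the paper simply posits $\varpi$ with $|\varpi|_s=|\varpi|_t$), then run the classical Baire--iteration argument using that uniform scaling. The only cosmetic differences are that you invoke the subgroup-with-interior shortcut to place a ball inside $\overline{T(B_1)}$, whereas the paper spells this out via the fundamental system $\{\varpi^n\B_F\}$, and your series is indexed slightly differently; neither affects the substance.
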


\begin{proof}
	Let $\varpi$ be a pseudo-uniformizer of $K_\Delta$ satisfying $|\varpi|_s=|\varpi|_t$ for all $s,t\in\mathfrak{S}$.  Set $\delta=|\varpi|$ and note that for every $n\in\mathbb{Z}$,
	\begin{align*}
		\B_E(0,\delta^n)&:=\{x\in E\mid \|x-0\|<\delta^n\}\\
		                        &=\{x\in E\mid |\varpi|^{-n}\|x\|<1\}\\
		                        &=\{x\in E\mid \|\varpi^{-n}x\|<1\}\\
		                        &=\varpi^n\B_E(0,1).
	\end{align*}
	Also write $\B_E:=\B_E(0,1)$ which is a subgroup satisfying $\bigcup\limits_{n=1}^\infty\B_E\left(0,\frac{1}{\delta^n}\right)=\bigcup\limits_{n=1}^\infty\varpi^{-n}\B_E=E$. Therefore, we have
	\begin{align*}
		F=\bigcup\limits_{n=1}^\infty \overline{T(\varpi^{-n}\B_E)}=\bigcup\limits_{n=1}^\infty \varpi^{-n}\overline{T(\B_E)}.
	\end{align*}
	By Baire's Category Theorem, there exists $k\in\mathbb{N}^*$ such that $\varpi^{-k}\overline{T(\B_E)}$ has an interior point and hence $\overline{T(\B_E)}$ has an interior point ($x\mapsto \varpi^{-k}x$ is a homeomorphism of $F$). Thus there exists $g\in\B_E$ such that $Tg\in\left(\overline{T(\B_E)}\right)^\circ$, interior of $\overline{T(\B_E)}$. As $\B_E$ is closed under addition, we get
	\begin{align*}
		0\in\left(\overline{T(\B_E-g)}\right)^\circ=\left(\overline{T(\B_E)}\right)^\circ.
	\end{align*}
	Now note that $\{\varpi^n\B_F\}_{n\geq 1}$ form a fundamental system of neighborhoods of $0$ in $F$ because $\lim\limits_{n\to\infty}\varpi^n=0$. There exists $m\geq 1$ such that $\varpi^m\B_F\subset \overline{T(\B_E)}$. By the definition of closure, we have
	\begin{align*}
		h\in\varpi^m\B_F\text{ and }\epsilon>0\implies \text{ there exists } f\in\B_E\text{ such that }\|h-Tf\|<\epsilon.
	\end{align*}
	Let $h$ be arbitrary element of $F$, then there exists $n\in\mathbb{Z}$ such that $|\varpi|^n\leq \|h\|<|\varpi|^{n+1}$ if $n<0$ or $|\varpi|^{n+1}\leq\|h\|<|\varpi|^n$ if $n\geq 0$. Consider
	\begin{align*}
		h'=
		\begin{cases}
			\varpi^{m-n}h &\text{ if } n\geq 0\\
			\varpi^{m-n-1}h &\text{ if } n<0
		\end{cases}.
	\end{align*}
	Note that $h'\in\varpi^n\B_F$ and hence applying above to $h'$, we get:
	\begin{equation}\label{eq 1}
		h\in F \text{ and }\epsilon>0\implies \text{ there exists } f\in
		\begin{cases}
			\varpi^{n-m}\B_E &\text{ if } n\geq 0\\
			\varpi^{n+1-m}\B_E &\text{ if }n<0
		\end{cases}
		\text{ such that }\|h-Tf\|<\epsilon.
	\end{equation}
	Suppose $g\in F$ and $\|g\|<1$. Applying equation (\ref{eq 1}) with $h=g$ and $\epsilon=\delta$ we get:
	\begin{align*}
		\text{ there exists } f_1\in \varpi^m\B_E \text{ such that }\|g-Tf_1\|<\delta.
	\end{align*}
	Again applying equation (\ref{eq 1}) with $h=g-Tf_1$ and $\epsilon=\delta^2$, we get
	\begin{align*}
		\text{there exists }f_2\in\varpi^{m+1}\B_E \text{ such that }\|g-Tf_1-Tf_2\|<\delta^2.
	\end{align*}
	Continuing this process, we can construct a sequence $f_1,f_2,\ldots$ in $E$ such that $f_k\in\varpi^{m+k-1}\B_E$ and $\|g-Tf_1-Tf_2-\cdots-Tf_k\|<\delta^n$. Let $f=\sum\limits_{k=1}^\infty f_k\in \varpi^m\B_E$ which converges because $f_k\to 0$. Indeed, $\varpi^{m+k}\B_E\subset\varpi^m\B_E$, which is a subgroup. Since $T$ is continuous, we obtain $g=Tf$. Therefore, $\B_F\subset T\left (\varpi^{m}\B_E\right)$ which proves the result.
\end{proof}

\begin{definition}
    A \emph{Banach algebra over $K_\Delta$} is a $K_\Delta-$algebra $R$ together with a family of norms $\|\bullet\|_s,s\in\mathfrak{S}$ such that:
    \begin{enumerate}
        \item[(i)] $(R,(\|\bullet\|_s: s\in\mathfrak{S}))$ is a Banach space over $K_\Delta$;
        \item[(ii)] For all $x,y\in R$ and for all $s\in\mathfrak{S}$, we have $\|xy\|_s\leq \|x\|_s\|y\|_s$;
        \item[(iii)] For all $s\in\mathfrak{S}$, $\|1\|_s=1$ where $1$ is the unit element.
    \end{enumerate}
\end{definition}

\begin{example}
  Recall from \cite{Hubet} that for a Tate ring $A$, we define
\begin{align*}
    A\left\langle T \right\rangle = A\left\langle T_1, \dots, T_n \right\rangle := \left\{ \sum\limits_{\nu \in \mathbb{N}^n} a_\nu T^\nu \,\middle|\, a_\nu \to 0 \text{ as } |\nu| \to \infty \right\},
\end{align*}
equipped with the topology for which a fundamental system of neighborhoods of 0 is given by
\begin{align*}
    U\left\langle T \right\rangle = U\left\langle T_1, \dots, T_n \right\rangle := \left\{ \sum\limits_{\nu \in \mathbb{N}^n} a_\nu T^\nu \,\middle|\, a_\nu \in U \text{ for every } \nu \in \mathbb{N}^n \right\}
\end{align*}
where $U$ runs over the fundamental system of neighborhoods of 0 on $A$.
For $A = K_\Delta$, we have a natural family of norms on $K_\Delta\left\langle X \right\rangle$; for $f = \sum\limits_{\nu \in \mathbb{N}^n} a_\nu X^\nu$, we define
\begin{align*}
    \|f\|_s := \sup\{ |a_\nu|_s \mid \nu \in \mathbb{N}^n \},
\end{align*}
with respect to which this form a Banach algebra. We call this the \emph{Tate algebra over $K_\Delta$}, and refer to the corresponding family of norms as the \emph{Gauss norm family}. The topology induced by the norm associated with this Banach algebra coincides with the topology defined above.
\end{example}

\begin{example}
    The $K_\Delta-$module $K_s$ is also a Banach algebra with the family of norms
    \begin{align*}
        \|x\|_{s'}:=
        \begin{cases}
            |x|_s & \text{ if } s'=s\\
            0     & \text{ if } s'\neq s
        \end{cases}.
    \end{align*}
\end{example}

\begin{definition}
    A Banach $K_\Delta-$algebra $R$ is called \emph{topologically of finite type} if there exists a surjective $K_\Delta-$algebra homomorphism $K_\Delta\left<T_1,\cdots,T_n\right>\to R$ for some $n\geq 0$.
\end{definition}

\begin{remark}
    It remains uncertain whether every quotient algebra of $K_\Delta\left<T_1,\cdots,T_n\right>$ inherently possesses a Banach $K_\Delta$-algebra structure. The existence of a Banach algebra structure on such quotients is guaranteed if every ideal within $K_\Delta\left<T_1,\cdots,T_n\right>$ is closed.
\end{remark}

Note that the ring $K_\Delta$ is not Noetherian in general. For example, take $K_\Delta = \mathbb{Q}_p(p^{1/p^\infty})\widehat{\otimes}_{\mathbb{Q}_p}\mathbb{Q}_p(p^{1/p^\infty})$, then the chain of principal ideals $(p^{1/p}\otimes 1 - 1\otimes p^{1/p}) \subset (p^{1/p^2}\otimes 1 - 1\otimes p^{1/p^2}) \subset \cdots$ does not stabilizes. But as every finitely generated ideal of $K_\Delta$ is principal, $K_\Delta$ is a coherent ring.


\section{Almost Mathematics in multivariate setups}

Fix a perfectoid $\Delta$-field $K_\Delta$ with maximal ideals $\mathfrak{P}_s$ indexed by the corresponding collection of embedding families $\mathfrak{S}$. For $s\in\mathfrak{S}$, the corresponding residue field is $K_s = K_\Delta / \mathfrak{P}_s$, and the canonical surjection is denoted by $\beta_s: K_\Delta \to K_s$. Define
\[
    \mathfrak{h} := \{x\in K_\Delta\mid |x|<1\}.
\]
This ideal is clearly flat and satisfies $\mathfrak{h}^2 = \mathfrak{h}$. This is also the Jacobson radical of $K^\circ_\Delta$. Note that $K_\Delta$ is Tate with ring of definition $K^\circ_\Delta$ and ideal of definition $I_\varpi=(\varpi)$, we have $K^\circ_\Delta\left[\frac{1}{\varpi}\right]=K_\Delta$. Thus, $(K^\circ_\Delta, \mathfrak{h})$ satisfies the assumptions of Faltings' almost mathematics \cite{GFal}, \cite{ART}. We have the following definition in this case
\begin{definition}
    Let $M$ be a $K^\circ_\Delta-$module. An element $x\in M$ is \emph{almost zero} if $\epsilon x=0$ for every $\epsilon\in\mathfrak{h}$. $M$ is called \emph{almost zero} if every element of $M$ is almost zero. It is equivalent to the condition $\varpi^{1/p^n}M=0$ for all $n$.
\end{definition}

The full subcategory of almost zero modules forms a thick Serre subcategory, and hence we can define the Serre Quotient, which we denote by $K^{\circ a}_\Delta-\Mod$. Thus, we have a localization functor
\begin{align*}
   K^\circ_\Delta-\Mod&\to K^{\circ a}_\Delta-\Mod,\\
                	              M&\mapsto M^a.
\end{align*}
By abstract nonsense, this functor is exact and essentially surjective with kernel the Serre subcategory of almost zero modules. The objects on $K^{\circ a}_\Delta-\Mod$ are objects of $K^\circ_\Delta-\Mod$ and set of morphisms from $M^a$ to $N^a$ in $K^{\circ a}_\Delta$ is given by the following formula for actual $K^\circ_\Delta$-module
\begin{align*}
	\Hom_{K^{\circ a}_\Delta}(M^a, N^a) := \Hom_{K^\circ_\Delta}(\mathfrak{h} \otimes_{K^\circ_\Delta} M, N).
\end{align*}
Tensor product in $K^\circ_\Delta-\Mod$ induces tensor algebra structure in $K^{\circ a}_\Delta-\Mod$. Tensor product is given by the formula; for $M^a,N^a\in K^{\circ a}_\Delta$, we have
\begin{align*}
    M^a\otimes N^a:=(M\otimes_{K^\circ_\Delta} N)^a.
\end{align*}
This tensor product gives an abelian Tensor category structure on $K^{\circ a}_\Delta-\Mod$ and hence one can define algebra objects $A$, i.e., a $K^{\circ a}_\Delta-\Mod$ with a compatible morphism $A\otimes A\to A$. The category of algebra object in $K^{\circ a}_\Delta-\Mod$ is denoted by $K^{\circ a}_\Delta-\Alg$. For every $A\in K^{\circ a}_\Delta-\Alg$, one can abstractly define modules, and this category is denoted by $A-\Mod$. The Serre subcategory is a localizing subcategory in the sense that the localization functor $M\mapsto M^a$ has a right adjoint $K^{\circ a}_\Delta-\Mod\to K^\circ_\Delta-\Mod; M\mapsto M_*$ given by the formula
\begin{align*}
    M_*:=\Hom_{K^{\circ a}_\Delta}(K^{\circ a}_\Delta,M)
\end{align*}
Thus, the abelian tensor category $K^{\circ a}_\Delta$ has all the colimits. Moreover, the counit of adjunction $M^a_*\to M$ is a natural isomorphism to the identity functor and there is an isomorphism $M^a_*\simeq M$. For $A\in K^{\circ a}_\Delta-\Alg$ and for $M,N\in A-\Mod$, there exists a tensor product defined by the formula
\begin{align*}
    M\otimes_A N:=(M_*\otimes_{A_*}N_*)^a
\end{align*}
which also gives $A-\Mod$ the structure of an abelian tensor category and hence can define module and algebra objects over it. We obtained an internal Hom functor as
\begin{align*}
    \alHom_A(M,N):=\Hom_{A-\Mod}(M,N)^a
\end{align*}
which is the right adjoint to the above tensor product functor. The localization functor $M\mapsto M^a$ also has a left adjoint given by
\begin{align*}
    (\bullet)_!:K^{\circ a}_\Delta-\Mod&\to K^\circ_\Delta-\Mod\\
                                      M&\mapsto \mathfrak{h}\otimes_{K^\circ_\Delta}M_*
\end{align*}
which is an exact functor, because $\mathfrak{h}$ is flat $K^{\circ}_\Delta-$module.

\begin{definition}[\cite{Sch},\S4]
    Let $A$ be $K^{\circ a}_\Delta-$algebra, $B$ be an $A-$algebra and $\mu:B\otimes_A B\to B$ denotes the diagonal morphism.
   \begin{enumerate}
    \item[(i)] An $A$-module $M$ is \emph{flat} if the functor $X \mapsto M \otimes_A X$ is exact. If $R$ is a $K^\circ_\Delta$-algebra and $N$ is an $R$-module, then the $R^a$-module $N^a$ is flat if and only if for all $R$-modules $X$ and all $i > 0$, the module $\Tor^R_i(N, X)$ is almost zero.
    \item[(ii)] An $A-$module $M$ is \emph{almost projective} if the functor $X\mapsto \alHom_A(M,X)$ on $A-$modules is exact. If $R$ is a $K^\circ_\Delta-$algebra and $N$ is an $R-$module, then $N^a$ is almost projective over $R^\circ$ if and only if for all $R-$modules $X$ and all $i>0$, the module $\Ext^i_R(N,X)$ is almost zero.
    \item[(iii)] If $R$ is a $K^\circ_\Delta-$algebra and $N$ is an $R-$module, then $M=N^a$ is said to be an \emph{almost finitely generated} (resp. \emph{almost finitely presented}) $R^a-$module if and only if for all $\epsilon\in\mathfrak{h}$, there is some finitely generated (resp. finitely presented) $R-$module $N_\epsilon$ with a map $f_\epsilon:N_\epsilon\to N$ such that  the kernel and cokernel of $f_\epsilon$ are annihilated by $\epsilon$. We say that $M$ is \emph{uniformly almost finitely generated} if there is some integer $n$ such that $N_\epsilon$ can be chosen to be generated by $n$ elements, for all $\epsilon$.
    \item[(iv)] An $A-$module $M$ is flat and almost finitely presented if and only if it is almost projective and almost finitely generated. In this case, $M$ is called \emph{finite projective}. If additionally $M$ is uniformly finitely generated, then $M$ is called \emph{uniformly finite projective}.
    \item[(v)] The morphism $A\to B$ is said to be \emph{unramified} if there is some element $e\in (B\otimes_A B)_*$ such that $e^2=e,\mu(e)=1$ and $xe=0$ for all $x\in\ker(\mu)_*$.
    \item[(vi)] The morphism $A\to B$ is said to be \emph{\'etale} if it is unramified and $B$ is a flat $A-$module.
    \item[(vii)] A morphism $A\to B$ of $K^{\circ a}_\Delta-$algebras is called \emph{finite \'etale} if it is \'etale and $B$ is an almost finitely presented $A-$module. The category of finite \'etale $A-$algebras is denoted by $A_{\fet}$. In this case, $B$ is a finite projective $A$-module.
   \end{enumerate}

\end{definition}

We will now give the analogous lemma to Lemma 5.3 \cite{Sch}.

\begin{lemma}[\cite{Sch},Lemma 5.3]\label{lemma5.1}
	Let $M$ be a $K^{\circ a}_\Delta-$module. Then
	\begin{enumerate}
		\item[(i).] The module $M$ is flat over $K^{\circ a}_\Delta$ if and only if $M_*$ is flat over $K^\circ_\Delta$ if and only if $M_*$ has no $\varpi-$torsion.
		\item[(ii).] If $N$ is a flat $K^\circ_\Delta-$module and $M=N^a$, then $M$ is flat over $K^{\circ a}$ and we have
		\begin{align*}
			M_*=\Big\{x\in N\left[\frac{1}{\varpi}\right] \mid \epsilon x\in N, \forall \epsilon\in\mathfrak{h}\Big\} .
		\end{align*}
		\item[(iii).] If $M$ is flat over $K^{\circ a}_\Delta$, then for all $x\in K^\circ_\Delta$, we have $(xM)_*=xM_*$. Moreover, $M_*/xM_*\subset (M/xM)_*$, and for all $\epsilon\in\mathfrak{h}$ the image of $(M/x\epsilon M)_*$ in $(M/xM)_*$ is equal to $M_*/xM_*$.
		\item[(iv).] If $M$ is flat over $K^{\circ a}_\Delta$, then $M$ is $\varpi-$adically complete if and only if $M_*$ is $\varpi-$adically complete.
	\end{enumerate}
\end{lemma}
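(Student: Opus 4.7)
The overall strategy is to reduce each assertion to a statement about the actual $K^\circ_\Delta$-module $M_*$ and then exploit two facts already proved in the paper: (a) a $K^\circ_\Delta$-module is flat if and only if it is $\varpi$-torsion free, and (b) the unit of the adjunction $M \mapsto M^a_* \cong M$ lets us freely move between $M$ and $M_*$. Throughout, I will use the explicit formula $M_* = \Hom_{K^\circ_\Delta}(\mathfrak{h}, M_*)$ and the fact that $\mathfrak{h}$ is a flat, idempotent ideal.

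For part (i), the first equivalence follows by combining the exactness of $N \mapsto N^a$ (so flatness of $M_*$ implies flatness of $M$) with the following observation in the other direction: if $M$ is flat over $K^{\circ a}_\Delta$ then for every short exact sequence of $K^\circ_\Delta$-modules the induced sequence is exact up to almost-zero modules, and applying this to $0 \to K^\circ_\Delta \xrightarrow{\varpi} K^\circ_\Delta \to K^\circ_\Delta/\varpi \to 0$ shows that the $\varpi$-torsion of $M_*$ is almost zero, hence (since $M_*$ already equals $(M_*^a)_*$ and the latter kills almost-zero submodules) vanishes. The second equivalence is then exactly the earlier proposition identifying flatness with $\varpi$-torsion freeness over $K^\circ_\Delta$.

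For part (ii), the first assertion is immediate from exactness of $N \otimes_{K^\circ_\Delta} (-)$ followed by $(-)^a$. For the explicit description of $M_*$, compute
\begin{align*}
M_* \;=\; \Hom_{K^{\circ a}_\Delta}(K^{\circ a}_\Delta, N^a) \;=\; \Hom_{K^\circ_\Delta}(\mathfrak{h}, N).
\end{align*}
Given $f \in \Hom_{K^\circ_\Delta}(\mathfrak{h}, N)$, use that for any $\epsilon, \epsilon' \in \mathfrak{h}$ one has $\epsilon' f(\epsilon) = f(\epsilon \epsilon') = \epsilon f(\epsilon')$ in $N$; since $N$ is $\varpi$-torsion free, the element $x := f(\epsilon)/\epsilon \in N[1/\varpi]$ is independent of the chosen $\epsilon$ and satisfies $\epsilon x = f(\epsilon) \in N$ for all $\epsilon \in \mathfrak{h}$. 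Conversely any such $x$ defines an honest homomorphism $\epsilon \mapsto \epsilon x$, giving the stated bijection.

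For part (iii), the containment $xM_* \subset (xM)_*$ is tautological. For the reverse, use the description in (ii): an element $y \in (xM)_*$ corresponds to a compatible system $\{\epsilon y\}_{\epsilon \in \mathfrak{h}}$ in $xM_*$, so writing $\epsilon y = x z_\epsilon$ and invoking $\varpi$-torsion freeness of $M_*$ to see $z_\epsilon/\epsilon$ is independent of $\epsilon$, one obtains $z \in M_*$ with $y = xz$. Applying the left exact functor $(-)_*$ to $0 \to xM \to M \to M/xM \to 0$ and substituting $(xM)_* = xM_*$ yields the inclusion $M_*/xM_* \hookrightarrow (M/xM)_*$. For the last claim, an element of $(M/xM)_*$ is represented compatibly by $\epsilon$-multiples; lifting through $(M/x\epsilon M)_*$ and using flatness to correct by $x$-multiples produces a preimage lying in $M_*/xM_*$, which is the desired identification of images.

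For part (iv), the main point is that $(-)_*$ is a right adjoint and so commutes with limits, while (iii) provides an almost-isomorphism $M_*/\varpi^n M_* \hookrightarrow (M/\varpi^n M)_*$ whose cokernel is killed by every $\epsilon \in \mathfrak{h}$. If $M$ is $\varpi$-adically complete in the almost sense, i.e.\ $M \to \varprojlim_n M/\varpi^n M$ is an almost isomorphism, then applying $(-)_*$ yields that $M_* \to \varprojlim_n (M/\varpi^n M)_*$ is an isomorphism; combined with (iii) and the $\varpi$-torsion freeness of $M_*$ from part (i), which kills the potential $\varprojlim^{(1)}$ contribution of the almost-zero error terms, one concludes that $M_*$ is $\varpi$-adically complete. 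The converse direction is a direct application of the localization functor to the completeness of $M_*$. I expect the technical heart of the argument — and the step most likely to trip me up — is the control in (iii) of the image of $(M/x\epsilon M)_*$ inside $(M/xM)_*$, since this is the ingredient that makes (iv) work by absorbing the $\epsilon$'s from the almost-isomorphism into the flatness.
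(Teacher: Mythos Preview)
Your proposal is correct and follows essentially the same approach as the paper: the paper proves (i) by the same $\Tor^1(M_*,K^\circ_\Delta/\varpi)=M_*[\varpi]$ argument together with the observation that $M_*=\Hom_{K^\circ_\Delta}(\mathfrak{h},M_*)$ has no almost-zero elements, proves (ii) by the same identification $M_*=\Hom_{K^\circ_\Delta}(\mathfrak{h},N)$, and for (iii) simply cites \cite{Sch} verbatim. The only point worth flagging is (iv): rather than tracking $\varprojlim^{(1)}$ of almost-zero error terms as you outline, the paper uses (iii) more directly, observing that the image of $(M/\varpi^{n}M)_*$ in $(M/\varpi^{n-1}M)_*$ is exactly $M_*/\varpi^{n-1}M_*$ (take $x=\varpi^{n-1}$, $\epsilon=\varpi$), so that $\varprojlim_n (M/\varpi^n M)_* = \varprojlim_n M_*/\varpi^{n-1}M_*$ on the nose --- this sidesteps the derived-limit bookkeeping entirely and is the cleaner formulation of exactly the ``absorbing the $\epsilon$'s'' mechanism you anticipated.
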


\begin{proof}
The same proof is given in \cite{Sch}, but we recall it here in this context.
\begin{enumerate}
    \item[(i)] If $M_*$ is a flat $K^\circ_\Delta$-module, then $\Tor^{K^\circ_\Delta}_i(M_*,N)=0$ for every $i>0$ and all $K^\circ_\Delta$-modules $N$. Therefore, $M$ is a flat $K^{\circ a}_\Delta$-algebra.

    Conversely, assume $M$ is flat as a $K^{\circ a}_\Delta$-module. Then for every $K^\circ_\Delta$-module $N$ and every $i > 0$, $\Tor^{K^\circ_\Delta}_i(M_*, N)$ is almost zero. Taking $i=1$ and $N = K^\circ_\Delta/\varpi K^\circ_\Delta$, we get that $\Tor^{K^\circ}_1(K^\circ_\Delta/(\varpi),M_*)=M_*[\varpi]$, which is the kernel of multiplication by $\varpi$, is almost zero. However, $M_* = \Hom_{K^{\circ a}_\Delta}(K^{\circ a}_\Delta, M) = \Hom_{K^\circ_\Delta}(\mathfrak{h}, M_*)$ has no almost zero elements. Indeed, if $f\in\Hom_{K^\circ_\Delta}(\mathfrak{h},M_*)$ is almost zero, then for every $\epsilon\in\mathfrak{h}$, we have $\epsilon f=0$. Suppose $x\in\mathfrak{h}$; then since $\mathfrak{h}^2=\mathfrak{h}$, there exist $\epsilon_i,\epsilon'_i\in\mathfrak{h}$, $i=1,\dots,m$ such that $x=\sum\limits_{i=1}^m\epsilon_i\epsilon'_i$. Thus $f(x)=\sum\limits_{i=1}^m\epsilon_if(\epsilon'_i)=0$. Therefore, $f=0$.

    \item[(ii)] The first part follows immediately from (i). For the second part, we claim that there is a bijection
    \begin{align*}
        \theta: \Hom_{K^\circ_\Delta}(\mathfrak{h}, N) \leftrightarrow \left\{ x \in \Hom_{K_\Delta}\left(K_\Delta, N\left[\frac{1}{\varpi}\right]\right) \,\middle|\, \forall \epsilon \in \mathfrak{h}, \, \epsilon x \in N \right\}.
    \end{align*}
    Indeed, since $N$ is a flat $K^\circ_\Delta$-module and  $N\left[\frac{1}{\varpi}\right]=\Hom_{K^\circ_\Delta}\left(K^\circ_\Delta,N\right)\left[\frac{1}{\varpi}\right]=\Hom_{K_\Delta}\left(K_\Delta,N\left[\frac{1}{\varpi}\right]\right)$, there is an inclusion map and a canonical map into the localization. The image is determined by the image of $1$; then $\epsilon f(1)=f(\epsilon)\in N$, which proves the claim. The image identifies with $\Big\{x\in N\left[\frac{1}{\varpi}\right]\mid \epsilon x\in N, \forall \epsilon\in\mathfrak{h}\Big\}$.

    \item[(iii)] The proof goes exactly along the same lines as in \cite{Sch}, with $K^{\circ}$ replaced by $K^\circ_\Delta$ and $\mathfrak{m}$ replaced by $\mathfrak{h}$.

    \item[(iv)] Since the functors $M \mapsto M_*$ and $N \mapsto N^a$ are left adjoints, they commute with inverse limits. Hence, if $M$ is $\varpi$-adically complete, then
    \begin{align*}
        M_*=(\varprojlim M/\varpi^n)_*=\varprojlim(M/\varpi^n)_*=\varprojlim M_*/\varpi^n
    \end{align*}
    since from part (iii) we have $\varprojlim (M/\varpi^n M)_*=\varprojlim M_*/\varpi^{n-1}M_*=\varprojlim M_*/\varpi^n$. This shows that $M_*$ is $\varpi$-adically complete.

    Conversely, if $M_*$ is $\varpi$-adically complete, then
    \begin{align*}
        M = (M_*)^a = \left(\varprojlim M_* / \varpi^n\right)^a = \varprojlim (M_* / \varpi^n)^a = \varprojlim M /\varpi^n.
    \end{align*}
    Thus, $M$ is $\varpi$-adically complete.
  \end{enumerate}
\end{proof}


\section{Perfectoid Algebras over $K_\Delta$}

Throughout this section, we fix a $\Delta$-perfectoid field $K_\Delta$. We also fix a pseudo-uniformizing ideal $I_\varpi=(\varpi)$ generated by the pseudo-uniformizer $\varpi$.

\begin{definition}
   \begin{enumerate}
       \item[(i)] A \emph{perfectoid $K_\Delta$-algebra} is a Banach $K_\Delta$-algebra $R$ such that the subset $R^\circ \subset R$ of power-bounded elements is open and bounded, and the Frobenius morphism
		\[
		\Phi: R^\circ /\varpi \to R^\circ /\varpi
		\]
		is surjective. Morphisms between perfectoid $K_\Delta$-algebras are continuous morphisms of $K_\Delta$-algebras.
		
       \item[(ii)] A \emph{perfectoid $K^{\circ a}_\Delta$-algebra} is an $I_\varpi$-adically complete and flat algebra $A$ over $K^{\circ a}_\Delta$ such that the Frobenius morphism induces an isomorphism
        \begin{align*}
            \Phi: A / \varpi^{1/p} \xrightarrow{\sim} A /\varpi.
        \end{align*}
        Morphisms of perfectoid $K^{\circ a}_\Delta$-algebras are morphisms of $K^{\circ a}_\Delta$-algebras.
		
       \item[(iii)] A \emph{perfectoid $K^{\circ a}_\Delta/\varpi$-algebra} is a flat $K^{\circ a}_\Delta/\varpi$-algebra $\overline{A}$ such that the Frobenius morphism induces an isomorphism
        \begin{align*}
            \Phi: \overline{A} / \varpi^{1/p} \xrightarrow{\sim} \overline{A}.
        \end{align*}
        Morphisms of perfectoid $K^{\circ a}_\Delta/\varpi$-algebras are morphisms of $K^{\circ a}_\Delta/\varpi$-algebras.
   \end{enumerate}
\end{definition}

 We have the following categories:
 \begin{itemize}
     \item $K_\Delta-\Perf$ the category of perfectoid $K_\Delta-$algebras;
     \item $K^{\circ a}_\Delta-\Perf$ the category of perfectoid $K^{\circ a}_\Delta-$algebras;
     \item $K^{\circ a}_\Delta-\Perf$ the category of perfectoid $K^{\circ a}_\Delta-$algebras.
 \end{itemize}

   We prove the analogous result given in \cite{Sch} in this context, and the same proof will also work here.
  \begin{theorem}\label{Theorem 6.2}
	The category of perfectoid $K_\Delta-$algebras and perfectoid $K^\flat_\Delta-$algebras is equivalent. The equivalences proceed as follows
    \begin{align*}
        K_\Delta-\Perf\cong K^{\circ a}_\Delta-\Perf\cong K^{\circ a}_\Delta/\varpi-\Perf= K^{\flat\circ a}_\Delta/\varpi^\flat-\Perf\cong K^{\flat\circ a}_\Delta-\Perf\cong K^\flat_\Delta-\Perf.
    \end{align*}	
  \end{theorem}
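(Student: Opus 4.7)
The plan is to establish each of the five equivalences in the chain separately, adapting Scholze's argument from \cite{Sch} \S 5 to the multivariate situation. Throughout, I would use the functors $(\bullet)_*$ and $(\bullet)^a$ from almost mathematics, Lemma \ref{lemma5.1}, and the identifications of $K^{\circ\flat}_\Delta$ with an inverse limit along Frobenius that were established in Section 2.

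First I would address $K_\Delta$-$\Perf\cong K^{\circ a}_\Delta$-$\Perf$. In the forward direction, send a perfectoid $K_\Delta$-algebra $R$ to $R^{\circ a}$; since $R^\circ$ is open and bounded, it is automatically $I_\varpi$-adically complete, and Lemma \ref{lemma5.1}(i) combined with the fact that $R^\circ$ is $\varpi$-torsion free guarantees flatness. The surjective Frobenius on $R^\circ/\varpi$ must be upgraded to an isomorphism $R^\circ/\varpi^{1/p}\xrightarrow{\sim}R^\circ/\varpi$; this follows because elements whose $p$-th power is divisible by $\varpi$ are themselves divisible by $\varpi^{1/p}$ in $R^\circ$, thanks to power-boundedness together with integral closedness of $R^\circ$ in $R$. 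The quasi-inverse sends an almost algebra $A$ to $A_*[1/\varpi]$ equipped with the family of norms $\|x\|_s:=\inf\{|\lambda|_s\mid\lambda\in K_\Delta,\,\lambda^{-1}x\in A_*\}$ indexed by $s\in\mathfrak{S}$, with $A_*$ recovering the power-bounded elements by Lemma \ref{lemma5.1}(ii).

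Next I would treat the deformation step $K^{\circ a}_\Delta$-$\Perf\cong K^{\circ a}_\Delta/\varpi$-$\Perf$. Reduction modulo $\varpi$ gives the functor in one direction, which is well defined because the isomorphism $A/\varpi^{1/p}\xrightarrow{\sim}A/\varpi$ descends to $\overline{A}/\varpi^{1/p}\xrightarrow{\sim}\overline{A}$. The quasi-inverse requires lifting a perfectoid $K^{\circ a}_\Delta/\varpi$-algebra $\overline{A}$ to a flat, $I_\varpi$-adically complete $K^{\circ a}_\Delta$-algebra $A$ with $A/\varpi\cong\overline{A}$. Uniqueness and existence would be obtained by building compatible lifts $A_n$ over $K^{\circ a}_\Delta/\varpi^n$ by induction on $n$, where the obstructions and automorphism groups for the square-zero extensions live in almost cotangent cohomology; these vanish because the Frobenius isomorphism on $\overline{A}$ forces the cotangent complex $\mathbb{L}_{\overline{A}/(K^{\circ a}_\Delta/\varpi)}$ to be almost zero, in complete analogy with Scholze's Proposition 5.20. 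This is the hardest link: the Gabber--Ramero cotangent complex machinery must be adapted to the coherent but non-Noetherian almost algebra $K^{\circ a}_\Delta$, and the precise almost-vanishing statement has to be extracted from the Frobenius hypothesis without appealing to the single-residue-field structure used in the classical proof.

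The equality $K^{\circ a}_\Delta/\varpi$-$\Perf=K^{\flat\circ a}_\Delta/\varpi^\flat$-$\Perf$ is tautological once one identifies $K^\circ_\Delta/\varpi\cong K^{\flat\circ}_\Delta/\varpi^\flat$; this is exactly the isomorphism $u_0$ appearing in the proof that $K^{\circ\flat}_\Delta$ is $\varpi^\flat$-adically complete, together with the compatible choice of $\varpi^\flat$ reducing to $\varpi$. The two remaining equivalences $K^{\flat\circ a}_\Delta/\varpi^\flat$-$\Perf\cong K^{\flat\circ a}_\Delta$-$\Perf\cong K^\flat_\Delta$-$\Perf$ are formally parallel to the first two, but much cleaner in equicharacteristic $p$: the quasi-inverse to reduction modulo $\varpi^\flat$ is just $\varprojlim_\Phi$, which is exact on perfect $\mathbb{F}_p$-algebras, so no deformation obstruction arises. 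The main obstacle throughout remains the mixed-characteristic deformation step, where one must verify that the standard almost-cotangent-complex arguments retain their force over a ring with infinitely many maximal ideals indexed by $\mathfrak{S}$ and no Noetherian hypothesis.
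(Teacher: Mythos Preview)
Your outline is essentially the paper's own proof: both follow Scholze's \S 5 link by link, with the functors $R\mapsto R^{\circ a}$, $A\mapsto A_*[1/\varpi]$, reduction modulo $\varpi$, and the cotangent-complex deformation argument, and both observe that the second half of the chain is formally the same as the first.

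Two points where your write-up diverges from what actually works. First, your proposed family of norms $\|x\|_s=\inf\{|\lambda|_s\mid \lambda^{-1}x\in A_*\}$ does not genuinely depend on $s$ through the constraint, only through the value $|\lambda|_s$; in particular it will not satisfy $\|\lambda x\|_s=|\lambda|_s\|x\|_s$ when $\lambda$ is a zero divisor with $\beta_s(\lambda)=0$, because you cannot approximate such $\lambda$ by units with small $|\cdot|_s$. The paper repairs this by measuring divisibility \emph{after} base change to $K^\circ_s$: it sets $\|x\|_s=\inf\{|\lambda|_s^{-1}\mid \beta_s(\lambda)\in K_s^\times,\ \lambda x\otimes 1\in A_*\otimes_{K^\circ_\Delta}K^\circ_s\}$, which localizes the condition and makes the verification of the Banach axioms go through. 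Second, your worry that the cotangent-complex step must be reworked ``over a ring with infinitely many maximal ideals'' is misplaced: the almost vanishing $\mathbb{L}^a_{\overline{A}/(K^{\circ a}_\Delta/\varpi)}\simeq 0$ (Corollary~\ref{Cor6.1}) comes from the relative Frobenius on $\mathbb{F}_p$-algebras and uses nothing about the spectrum of $K^\circ_\Delta$, so the paper simply invokes Scholze's argument verbatim. The multivariate content of the theorem is concentrated entirely in the first equivalence, in constructing the correct Banach $K_\Delta$-algebra structure on $A_*[1/\varpi]$.
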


Note that it is enough to prove the result for the first half of the chain of equivalences, and the same proof will work for the other half.

\begin{proposition}\label{Propostion 6.3}
    The functor 
    \begin{align*}
        K_\Delta-\Perf&\to K^{\circ a}_\Delta-\Perf\\
                     R&\mapsto R^{\circ a}
    \end{align*}
    is an equivalence of categories.
\end{proposition}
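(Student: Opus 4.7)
The plan is to adapt Scholze's argument (Proposition 5.9 in \cite{Sch}) to the multivariate setting, constructing an explicit quasi-inverse and verifying the unit and counit of adjunction are isomorphisms. Throughout, I will use Lemma \ref{lemma5.1} freely to transport information between $M$ and $M_*$, and I will use that $K^{\circ a}_\Delta$-almost mathematics behaves formally like almost mathematics over a perfectoid field, since the ideal $\mathfrak{h}$ is flat and idempotent.

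First I would verify that the functor $R \mapsto R^{\circ a}$ lands in $K^{\circ a}_\Delta\text{-}\Perf$. Given $R \in K_\Delta\text{-}\Perf$, the subring $R^\circ$ is open and bounded by hypothesis, hence $\varpi$-adically complete and separated, and $R^\circ$ is $\varpi$-torsion free because $R$ is a $K_\Delta$-algebra (so $\varpi$ is invertible in $R$). By Lemma \ref{lemma5.1}(i), $R^{\circ a}$ is flat over $K^{\circ a}_\Delta$, and by Lemma \ref{lemma5.1}(iv) it is $\varpi$-adically complete. The almost isomorphism $\Phi : R^{\circ a}/\varpi^{1/p} \xrightarrow{\sim} R^{\circ a}/\varpi$ follows from the surjectivity of Frobenius on $R^\circ/\varpi$ (hypothesis) together with the standard almost-injectivity argument: if $x^p \in \varpi R^\circ$, then $x \in \varpi^{1/p} R^\circ$ up to $\mathfrak{h}$, which one sees by passing to the integral closure and using that $R^\circ$ is a ring of power-bounded elements.

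Next I would construct the quasi-inverse. Given a perfectoid $K^{\circ a}_\Delta$-algebra $A$, define
\begin{align*}
R := A_*\!\left[\tfrac{1}{\varpi}\right],
\end{align*}
equipped with the family of seminorms induced from the family $|\bullet|_s$ on $K_\Delta$ via the gauge associated to the bounded subset $A_* \subset R$. Concretely, declare a subset $U \subset R$ to be a neighborhood of $0$ iff $\varpi^n A_* \subset U$ for some $n$; this makes $R$ a Tate ring with ring of definition the image of $A_*$ and ideal of definition $(\varpi)$. I would then show $R$ is a Banach $K_\Delta$-algebra in the sense of Section 3 by exhibiting the family $\|\bullet\|_s$ coming from the projections $A_* \to A_*/\mathfrak{p}_s$, and check completeness using Lemma \ref{lemma5.1}(iv) applied to $A$. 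The Frobenius condition transfers directly since $\Phi : A/\varpi^{1/p} \to A/\varpi$ is an isomorphism by hypothesis, and this implies surjectivity of Frobenius on $R^\circ/\varpi$ after identifying $R^{\circ a}$ with $A$ (see below).

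The heart of the argument, and the main obstacle, is proving $R^{\circ a} \cong A$ as $K^{\circ a}_\Delta$-algebras, i.e., identifying power-bounded elements of $R$ with $A_*$ up to almost isomorphism. One inclusion is automatic: $A_* \subset R^\circ$ since $A_*$ is bounded. For the reverse, I would argue as in \cite{Sch}: if $x \in R^\circ$, then $\{x^n\}_{n \geq 0}$ is bounded, so for every $\epsilon \in \mathfrak{h}$ one has $\epsilon x^n \in A_*$ for all $n$, and the Frobenius-isomorphism hypothesis forces $x \in A_*[1/\varpi]$ to already satisfy $\epsilon x \in A_*$, giving $x \in (A_*)^a_* = A_*$ modulo almost zero elements. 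This is where the perfectoid hypothesis is essential and where one needs to be careful that the multivariate setup causes no obstruction --- I expect it not to, because the argument only uses $\mathfrak{h}^2 = \mathfrak{h}$, flatness of $\mathfrak{h}$, and the $\varpi$-adic topology, all of which are available. Similarly, for the unit $R \to (R^{\circ a})_*[1/\varpi]$ one checks this is the identity by the very definition of $R^\circ$ and openness/boundedness.

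Finally I would verify functoriality and full faithfulness. For morphisms, any continuous $K_\Delta$-algebra map $R \to S$ carries $R^\circ$ to $S^\circ$ (power-boundedness is preserved), yielding $R^{\circ a} \to S^{\circ a}$; conversely, a map $A \to B$ of $K^{\circ a}_\Delta$-algebras induces a map $A_* \to B_*$ by functoriality of $(\bullet)_*$, and inverting $\varpi$ produces the continuous map on the $R$ side. The identities $(R^{\circ a})_*[1/\varpi] = R$ and $(A_*[1/\varpi])^{\circ a} = A$ established above then give the equivalence. The routine verifications --- that completed tensor products and all topological manipulations behave well --- go through exactly as in the classical setting because the proofs in \cite{Sch} use only the formal structure of almost mathematics over $(K^\circ_\Delta, \mathfrak{h})$, not specific features of a perfectoid field.
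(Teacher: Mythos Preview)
Your approach is correct and essentially identical to the paper's, which also follows Scholze by constructing the quasi-inverse $A \mapsto A_*[1/\varpi]$ and proving $R^\circ = A_*$ on the nose via the key claim $x^p \in A_* \Rightarrow x \in A_*$. The one place where the paper does more than your sketch anticipates is the Banach-algebra structure on $A_*[1/\varpi]$: a single gauge norm attached to $A_*$ does not produce a \emph{family} satisfying $\|\lambda x\|_s = |\lambda|_s \|x\|_s$, so the paper defines $\|x\|_s := \inf\{|\lambda|_s^{-1} : \beta_s(\lambda)\in K_s^\times,\ \lambda x \otimes 1 \in A_* \otimes_{K^\circ_\Delta} K^\circ_s\}$ fiberwise --- this is the one genuinely multivariate ingredient you should not expect to come for free.
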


\begin{proof}
  The same proof given in Proposition 5.5, Lemma 5.6, and Lemma 5.7 of \cite{Sch} works in this setup. For the sake of completeness, let us recall the proof. We shall prove the results in steps:

\textbf{Step 1: $R^{\circ a}$ is a perfectoid $K^{\circ a}_\Delta$-algebra.} To show this, we will demonstrate that $\Phi$ induces an isomorphism $R^\circ/\varpi^{1/p}\cong R^\circ/\varpi$. Since $\Phi: R^\circ/\varpi\to R^\circ/\varpi$ is surjective and $\varpi^{1/p} R^\circ\subset \ker\Phi$, $\Phi$ factors through $R^\circ/\varpi^{1/p}$, and thus the induced map $R^\circ/\varpi^{1/p}\to R^\circ/\varpi$ is surjective. We will now prove injectivity. Suppose $x\in R^\circ$ is such that $x^p\in\varpi R^\circ$. Then $\frac{x^p}{\varpi}\in R^\circ$, which implies $\frac{x}{\varpi^{1/p}}\in R^\circ$, i.e., $x\in\varpi^{1/p}R^\circ$. Therefore, $\Phi$ induces a bijection $R^{\circ a}/\varpi^{1/p}\cong R^\circ/\varpi$. Moreover, $R^{\circ a}$ is flat because $R^\circ$, being torsion-free, is a flat $K^\circ_\Delta$-algebra, using Proposition \ref{Flatness criterion}. Therefore, $R^{\circ a}$ is a perfectoid $K^{\circ a}_\Delta$-algebra.

\textbf{Step 2: Construction of a quasi-inverse.} We claim that the functor
\begin{align*}
K^{\circ a}_\Delta-\Perf&\to K_\Delta-\Perf,\\
A&\mapsto A_*\left[\frac{1}{\varpi}\right]
\end{align*}
is the quasi-inverse to the functor $R\mapsto R^{\circ a}$. Set $R=A_*\left[\frac{1}{\varpi}\right]$. We begin by giving $R$ a Banach algebra structure. Define a family of norms on $R$ indexed by $s\in \mathfrak{S}$ as
\begin{align*}
\|x\|_s&:=\inf\{|\lambda|^{-1}_s\mid \beta_s(\lambda)\in K^\times_s\text{ and } \lambda x\otimes 1\in A_*\otimes_{K^\circ_\Delta}K^\circ_s\}.
\end{align*}

Let us verify that this is indeed a family of norms on $R$.
\begin{itemize}
\item Suppose $x \in R$ and $\lambda \in K_\Delta$. If $\beta_s(\lambda) = 0$, then $\lambda x \otimes 1 = x \otimes \beta_s(\lambda) = 0$; hence, for all $\lambda_1 \in K_\Delta$ with $\beta_s(\lambda_1) \neq 0$, we have $\lambda_1(\lambda x \otimes 1) = 0$. It follows that $\|\lambda x\|_s = 0 = |\lambda|_s \|x\|_s$. If $\beta_s(\lambda) \neq 0$, we have:
\begin{align*}
    \|\lambda x\|_s &= \inf \{ |\lambda_1|^{-1}_s \mid \beta_s(\lambda_1) \in K^\times_s \text{ and } \lambda_1(\lambda x) \otimes 1 \in A_* \otimes_{K^\circ_\Delta} K^\circ_s \} \\
    &= |\lambda|_s \inf \{ |\lambda \lambda_1|^{-1}_s \mid \beta_s(\lambda \lambda_1) \in K^\times_s \text{ and } (\lambda \lambda_1) x \otimes 1 \in A_* \otimes_{K^\circ_\Delta} K^\circ_s \} \\
    &= |\lambda|_s \|x\|_s.
\end{align*}

\item Suppose that for some $x \in R$, we have $\|x\|_s = 0$ for every $s \in \mathfrak{S}$. Then for every $\lambda \in K^\times_\Delta$, we have $\lambda x \otimes 1 \in A_* \otimes_{K^\circ_\Delta} K^\circ_s$. Consider the following map:
\begin{align*}
    A_* \cong A_* \otimes_{K^\circ_\Delta} K^\circ_\Delta &\to \prod_{s \in \mathfrak{S}} A_* \otimes_{K^\circ_\Delta} K^\circ_s \subset \prod_{s \in \mathfrak{S}} R \otimes_{K_\Delta} K_s \cong \prod_{s \in \mathfrak{S}} R_{\mathfrak{P}_s} \\
    \sum_{i=1}^n x_i \otimes \lambda_i &\mapsto \left( \sum_{i=1}^n x_i \otimes \beta_s(\lambda_i) \right)_{s \in \mathfrak{S}}
\end{align*}
Denote the image of $x \in A_*$ under the above map by $(x_s)_{s \in \mathfrak{S}} \in \prod_{s \in \mathfrak{S}} R_{\mathfrak{P}_s}$. Note that if $(x_s)_{s \in \mathfrak{S}} = 0$, then $x = 0$; therefore, the map is injective. This implies $\lambda x \in A_*$ for every $\lambda \in K^\times_\Delta$. In particular, for every $n \geq 0$, we have $\varpi^{-n} x \in A_*$, which shows that $x = 0$.

\item Clearly, $\|xy\|_s \leq \|x\|_s \|y\|_s$.
\end{itemize}

Consequently, $\|\cdot\|_s$ defines a family of norms. The $K_\Delta$-algebra $R$ is complete with respect to the norm $\|x\| = \sup_{s \in \mathfrak{S}} \|x\|_s$, as it is $\varpi$-adically complete. Thus, $R$ is a Banach $K_\Delta$-algebra.

 Next, we want to show that $R$ is a perfectoid $K_\Delta$-algebra, i.e., the Frobenius $\Phi: R^\circ/\varpi\to R^\circ/\varpi$ is surjective. We claim that $R^\circ=A_*$. To see this, we will first show that
 \begin{align*}
   A_*=\{x\in R\mid \|x\|\leq 1\}.
 \end{align*}
 Indeed, if $x\in A_*$, then $1\cdot x=x\in A_*$, and hence $\|x\|_s\leq 1$ for every $s\in\mathfrak{S}$. Conversely, suppose $x\notin A_*$. Then there exists $m\in\mathbb{N}^*$ such that $\varpi^{m} x\notin A_*$ but $\varpi^{m+1}x\in A_*$. Suppose $\lambda\in K_\Delta\setminus \mathfrak{P}_s$ such that $\beta_s(\lambda)\vartheta_s(x)\in A_*\otimes_{K^\circ_\Delta}K^\circ_s$. Then $\beta_s(\lambda)\mid \beta_s(\varpi^{m})$, i.e., $|\lambda|_s\leq |\varpi|_s=|\varpi|$. Therefore, $\|x\|_s\geq |\lambda|^{-1}_s\geq |\varpi|^{-1}$ for every $s\in\mathfrak{S}$. Thus $\|x\|=\sup\limits_{s\in\mathfrak{S}}\{\|x\|_s\}\geq |\varpi|^{-1}>1$.

 We will now show that $A_*$ is the set of power-bounded elements of $R^\circ$. If $x$ is a power bounded elements of $R$, then $\epsilon x$ is topologically nilpotent and hence for some $N\in\mathbb{N}$, we have $(\epsilon x)^{p^N}\in A_*$, since $A_*$ is open.\\ 

 \textbf{Claim.}\emph{ Assume that $x\in R$ satisfies $x^p\in A_*$. Then $x\in A_*$.}\\
 
 \emph{Proof of the Claim.} Since $R=A_*\left[\frac{1}{\varpi}\right]$, there exists a positive integer $k$ such that $y=\varpi^{k/p}x\in A_*$. This implies $y^p=\varpi^kx^p\in\varpi A_*$. As $A_*$ is torsion-free and $\varpi^{1/p}$ is not a zero divisor, the condition $y^p\in A_*$ implies $y\in \varpi^{1/p}A_*$. Substituting $y=\varpi^{k/p}x$, we get $\varpi^{k/p}x\in\varpi^{1/p}A_*$, which further implies $\varpi^{\frac{k-1}{p}}x\in A_*$. Repeating this argument $k$ times, we get $x\in A_*$. This completes the proof of the claim.\\
 
 We need to show that the Frobenius $\Phi: A_*/\varpi\to A_*/\varpi$ is surjective. It is almost surjective by assumption; thus, it is enough to show surjectivity after composition with the canonical surjection $A_*/\varpi\to A_*/\varpi\to A_*/\mathfrak{h}$. Let $x\in A_*$. Then by almost surjectivity, there exists $y\in A_*$ such that $\varpi^{1/p}x\equiv y^p\mod\varpi$. Consider the element $z=\frac{y}{\varpi^{1/p^2}}\in R$. Then we get $z^p=\frac{y^p}{\varpi^{1/p}}\equiv x\mod \varpi^{1-1/p}$. This proves surjectivity $A_*/\varpi\to A_*/\mathfrak{h}$.

\end{proof}

\begin{proposition}[\cite{Sch},Proposition 5.8]
    Let $R$ be a perfectoid $K_\Delta-$algebra. Then $R$ is reduced.
\end{proposition}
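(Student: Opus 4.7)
The plan is to reduce to showing that any $f \in R$ with $f^p = 0$ must vanish, and then to prove that such an $f$ lies in $\varpi^{n/p} R^\circ$ for every $n \geq 1$, which forces $f = 0$ by boundedness of $R^\circ$. First I would perform two reductions: (a) if $f^n = 0$, then $f^{p^k} = 0$ for $k$ large, and setting $g := f^{p^{k-1}}$ gives $g^p = 0$, so an induction on $k$ reduces the problem to the case $f^p = 0$; (b) since $R^\circ$ is open and bounded and $R = R^\circ[1/\varpi]$, we have $\varpi^m f \in R^\circ$ for some $m \geq 0$, and replacing $f$ by $\varpi^m f$ (legitimate because $\varpi$ is a unit in $R$) lets us further assume $f \in R^\circ$.

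For the main step, invoke Frobenius surjectivity modulo $\varpi$ to write $f = g^p + \varpi h$ with $g, h \in R^\circ$. Raising to the $p$-th power and using $f^p = 0$, the binomial expansion gives
\begin{align*}
g^{p^2} \;=\; -\sum_{k=1}^{p} \binom{p}{k}\, g^{p(p-k)} (\varpi h)^k \;\in\; \varpi R^\circ.
\end{align*}
Applying the key integral statement ``$y \in R^\circ$ with $y^p \in \varpi R^\circ$ implies $y \in \varpi^{1/p} R^\circ$'' (established via the integral-closedness argument in Step 1 of the proof of Proposition \ref{Propostion 6.3}) to $y = g^p$, we obtain $g^p \in \varpi^{1/p} R^\circ$. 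Since $\varpi = \varpi^{1/p} \cdot \varpi^{1-1/p}$ with $\varpi^{1-1/p} \in R^\circ$, the term $\varpi h$ already lies in $\varpi^{1/p} R^\circ$, hence $f = g^p + \varpi h \in \varpi^{1/p} R^\circ$.

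Now set $f_1 := f / \varpi^{1/p} \in R^\circ$. Because $\varpi$ is a unit (a fortiori a non-zero-divisor) in $R$, we have $f_1^p = f^p/\varpi = 0$, so $f_1$ satisfies the same hypotheses as $f$. Iterating the previous paragraph produces $f \in \varpi^{n/p} R^\circ$ for every $n \geq 1$. Boundedness of $R^\circ$ supplies a constant $C > 0$ with $\|r\| \leq C$ for all $r \in R^\circ$, whence $\|f\| \leq C\,|\varpi|^{n/p}$ for every $n$; letting $n \to \infty$ forces $\|f\| = 0$, and then axiom (iii) of the norm on $R$ (which implies $\sup_{s}\|\cdot\|_s$ is Hausdorff) gives $f = 0$. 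The only delicate point is securing the integral (rather than merely almost) statement of Frobenius injectivity used above; this is delivered exactly by Step 1 of Proposition \ref{Propostion 6.3}, so it can be quoted rather than reproved.
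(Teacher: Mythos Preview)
Your proof is correct but takes a considerably longer route than the paper's. The paper dispatches the statement in three lines without touching the Frobenius at all: if $x\in R$ is nilpotent then so is $\lambda x$ for every $\lambda\in K_\Delta$, hence $\lambda x$ is power-bounded, giving $xK_\Delta\subset R^\circ$. In particular $x/\varpi^n\in R^\circ$ for every $n$, so $x\in\bigcap_n\varpi^n R^\circ=\{0\}$ by boundedness of $R^\circ$ and $\varpi$-adic separatedness. This is really the standard fact that any uniform Banach (or Tate) ring is reduced, and it uses nothing perfectoid.

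Your argument instead leans on the perfectoid structure: you invoke Frobenius surjectivity mod $\varpi$ together with the injectivity statement from Step~1 of Proposition~\ref{Propostion 6.3} to force $f\in\varpi^{1/p}R^\circ$, and then iterate to land in $\bigcap_n\varpi^{n/p}R^\circ$. This is a valid path, and it nicely exercises the $\Phi$-injectivity machinery, but it is more work than needed. The paper's approach is shorter and more general (it applies to any Banach $K_\Delta$-algebra with $R^\circ$ open and bounded); yours has the minor advantage of making the role of $p$-th roots explicit, at the cost of an induction and a citation back to Proposition~\ref{Propostion 6.3}.
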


\begin{proof}
     Suppose $x$ is nilpotent, then there exists $n>1$ such that $x^n=0$. Then $xK_\Delta\subset R^\circ$ and hence $x/\varpi^n\in R^\circ$ for all $n\geq 1$ giving $x\in\bigcap\limits_{n\in\mathbb{N}}\varpi^n R^\circ=\{0\}$ as $\varpi$ is a topologically nilpotent unit and $R^\circ$ is bounded and $R$ is $\varpi-$adically separated. 
\end{proof}

\begin{proposition}
    Let $K_\Delta$ be of characteristic $p$, and let $R$ be a Banach $K_\Delta-$algebra such that the set of power-bounded elements $R^\circ\subset R$ is open and bounded. Then $R$ is perfectoid if and only if $R$ is perfect.
\end{proposition}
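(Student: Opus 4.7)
The plan is to treat the two directions separately, relying on the fact that $K_\Delta$ in characteristic $p$ is perfect (its localizations are perfectoid fields in characteristic $p$, hence perfect; being perfect is a local property for reduced rings). In particular, $\varpi$ admits a compatible system of $p$-power roots $\varpi^{1/p^m}$ inside $K^\circ_\Delta$.

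For \emph{perfect $\Rightarrow$ perfectoid}, I fix $\bar x \in R^\circ/\varpi$ with lift $x \in R^\circ$. Since $R$ is perfect, there is a unique $y \in R$ with $y^p = x$. The point is that $y \in R^\circ$ automatically: the sequence $\{y^{np}\}_n = \{x^n\}_n$ is bounded because $x \in R^\circ$, and multiplying by the finite set $y^0, y^1, \dots, y^{p-1}$ gives boundedness of $\{y^m\}_m$. Then $\Phi(y) \equiv x \pmod \varpi$, so $\Phi: R^\circ/\varpi \to R^\circ/\varpi$ is surjective.

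For the harder direction \emph{perfectoid $\Rightarrow$ perfect}, injectivity of Frobenius on $R$ is immediate: by the preceding proposition $R$ is reduced, so $x^p = 0$ forces $x = 0$. For surjectivity I first treat $x \in R^\circ$. Iterating the surjectivity of $\Phi$ on $R^\circ/\varpi$, I build sequences $\{y_n\}$ and $\{x_n\}$ in $R^\circ$ with $x_0 = x$ and $x_n = y_n^p + \varpi x_{n+1}$, so that $x = \sum_{n\ge 0} \varpi^n y_n^p$ as a convergent series in $R$ (convergence follows because $R^\circ$ is bounded and $\varpi$ is topologically nilpotent). Using the roots $\varpi^{1/p} \in K^\circ_\Delta$, I rewrite each summand as $\varpi^n y_n^p = (\varpi^{n/p} y_n)^p$. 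The partial sums $z_N := \sum_{n=0}^{N-1} \varpi^{n/p} y_n$ form a Cauchy sequence in $R^\circ$ (since $\varpi^{n/p} y_n \to 0$ and $R^\circ$ is bounded and closed), so by completeness they converge to some $y \in R^\circ$; continuity of Frobenius together with the characteristic-$p$ identity $z_N^p = \sum_{n=0}^{N-1} \varpi^n y_n^p$ yields $y^p = x$. For general $x \in R = R^\circ[1/\varpi]$, I write $x = \varpi^{-n} z$ with $z \in R^\circ$, extract $w \in R^\circ$ with $w^p = z$, and take $\varpi^{-n/p} w$ as a $p$-th root of $x$.

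The main obstacle is the surjectivity step in the second direction: one must simultaneously justify the convergence of the iterated expansion, interchange the (infinite) sum with the $p$-th power — which is legitimate only because $\operatorname{char} K_\Delta = p$ and Frobenius is continuous — and use that the $p$-power roots of $\varpi$ genuinely live in $K^\circ_\Delta$. This last point is precisely where the hypothesis that $K_\Delta$ is a perfectoid $\Delta$-field of equicharacteristic (hence perfect) is decisive; the rest of the argument is a standard $\varpi$-adic approximation.
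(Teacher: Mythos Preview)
Your proof is correct and, in fact, considerably more complete than the paper's own. The paper's argument for the direction perfectoid $\Rightarrow$ perfect asserts ``$R^\circ/\varpi = R^\circ$ as $|p|\le|\varpi|<1$'', which is false as written, and for the converse it only records that $x\in R^\circ$ implies $x^p\in R^\circ$, which is the wrong direction for surjectivity of $\Phi$ on $R^\circ/\varpi$. Your explicit $\varpi$-adic successive approximation $y=\sum_{n\ge 0}\varpi^{n/p}y_n$, together with the observation that the $p$-th root of a power-bounded element is again power-bounded, supplies exactly the content the paper omits. The underlying strategy---reducedness for injectivity of Frobenius, and lifting surjectivity of $\Phi$ from $R^\circ/\varpi$ to $R^\circ$ using the $p$-power roots $\varpi^{1/p^m}\in K^\circ_\Delta$---is the standard one and is what the paper is gesturing at, so the approaches are not genuinely different; you have simply carried it out properly.
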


\begin{proof}
    Suppose $R$ is perfectoid. $R$ is reduced and $R^\circ/\varpi=R^\circ$ as $|p|\leq |\varpi|<1$. Thus, the ring is perfect. Conversely, suppose $R$ is a perfect ring of characteristic $p$, then if $x\in R^\circ$, then $x^p\in R^\circ$. Thus, the Frobenius $R^\circ/\varpi\to R^\circ/\varpi$ is surjective. 
\end{proof}

\begin{proposition}[\cite{Sch} Proposition-5.13]
    \begin{enumerate}
        \item[(i).] Let $R$ be a perfect $\mathbb{F}_p-$algebra. Then $\mathbb{L}_{R/\mathbb{F}_p}\cong 0$.
        \item[(ii).] Let $R\to S$ be a morphism of $\mathbb{F}_p-$algebras. Let $R_{(\Phi)}$ is the ring $R$, viewed as an $R$-algebra through the Frobenius, and define $S_{(\Phi)}$ similarly. Assume that the relative Frobenius $\Phi_{S/R}$ induces an isomorphism
        \begin{align*}
            R_{(\Phi)}\otimes^{\mathbb{L}}_RS\to S_{(\Phi)}
        \end{align*}
        in $D(R)$. Then $\mathbb{L}_{S/R}\cong 0$.
    \end{enumerate}
\end{proposition}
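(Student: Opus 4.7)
The plan is to prove both parts of this proposition using the classical cotangent-complex technology employed by Scholze in the single-variable setting, since the statement involves no data from the multivariate setup beyond what is formally identical to the absolute case. The core input is a computational fact: the Frobenius endomorphism in characteristic $p$ kills differentials, because $d(x^p) = p x^{p-1} dx = 0$. Both assertions follow by combining this fact with the functoriality and base-change properties of the cotangent complex.

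For part (i), I would first take a simplicial polynomial resolution $P_\bullet \to R$ by free $\mathbb{F}_p$-algebras and compute $\mathbb{L}_{R/\mathbb{F}_p} = \Omega^1_{P_\bullet/\mathbb{F}_p} \otimes_{P_\bullet} R$. Next I would lift the absolute Frobenius $\Phi : R \to R$ to a map of simplicial algebras $P_\bullet \to P_\bullet$ (replacing $P_\bullet$ by a common free refinement if necessary). On each free term $\mathbb{F}_p[x_\alpha]$, the lift sends $x_\alpha \mapsto x_\alpha^p$, so the induced map on $\Omega^1$ sends $dx_\alpha \mapsto d(x_\alpha^p) = 0$. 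This shows that the natural $\Phi$-semilinear endomorphism $\mathbb{L}\Phi$ of $\mathbb{L}_{R/\mathbb{F}_p}$ vanishes in $D(R)$. On the other hand, because $R$ is perfect, $\Phi$ is a ring isomorphism, and the same functoriality therefore exhibits $\mathbb{L}\Phi$ as an isomorphism of the underlying complex. A map which is simultaneously the zero map and an isomorphism forces its source to vanish, giving $\mathbb{L}_{R/\mathbb{F}_p} \cong 0$.

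For part (ii), the strategy is to transfer the above mechanism across a base change driven by the hypothesis. The derived base-change formula for the cotangent complex gives
\begin{align*}
\mathbb{L}_{(R_{(\Phi)} \otimes^{\mathbb{L}}_R S)/R_{(\Phi)}} \;\cong\; \mathbb{L}_{S/R} \otimes^{\mathbb{L}}_R R_{(\Phi)},
\end{align*}
and by the hypothesized isomorphism $R_{(\Phi)} \otimes^{\mathbb{L}}_R S \xrightarrow{\sim} S_{(\Phi)}$ in $D(R)$ the left-hand side identifies with $\mathbb{L}_{S_{(\Phi)}/R_{(\Phi)}}$. I would then analyze the transitivity triangles for the two factorizations $R \xrightarrow{\Phi_R} R_{(\Phi)} \to S_{(\Phi)}$ and $R \to S \xrightarrow{\Phi_S} S_{(\Phi)}$ of the composite $R \to S_{(\Phi)}$. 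Applying the argument of (i) to the absolute Frobenii $\Phi_R$ and $\Phi_S$ shows that the connecting maps coming from $\mathbb{L}\Phi_R$ and $\mathbb{L}\Phi_S$ vanish, so the two triangles collapse to isomorphisms relating $\mathbb{L}_{S_{(\Phi)}/R}$ directly with $\mathbb{L}_{S_{(\Phi)}/R_{(\Phi)}}$ and with $\mathbb{L}_{S/R} \otimes^{\mathbb{L}}_S S_{(\Phi)}$. Combining these identifications with the base-change formula above, I expect to obtain $\mathbb{L}_{S/R} \otimes^{\mathbb{L}}_R R_{(\Phi)} \cong \mathbb{L}_{S/R} \otimes^{\mathbb{L}}_S S_{(\Phi)}$ in a way that forces $\mathbb{L}_{S/R} \cong 0$, since the underlying ring $R_{(\Phi)}$ is just $R$ with a Frobenius twist.

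The principal obstacle is bookkeeping: the various cotangent complexes $\mathbb{L}_{S_{(\Phi)}/R_{(\Phi)}}$, $\mathbb{L}_{S_{(\Phi)}/S}$, and $\mathbb{L}_{R_{(\Phi)}/R}$ live in different derived categories with $R$-module structures twisted by Frobenius, and the comparison requires unraveling the relative-Frobenius factorization $S \to R_{(\Phi)} \otimes^{\mathbb{L}}_R S \to S_{(\Phi)}$ carefully. The subtle step is ensuring that when one identifies $\mathbb{L}\Phi_S$ with the composition of the base-change map and the hypothesized isomorphism, the vanishing of the former (by the argument of (i)) genuinely descends to the vanishing of $\mathbb{L}_{S/R}$ rather than merely its Frobenius twist.
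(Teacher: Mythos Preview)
The paper does not supply its own proof of this proposition: it is stated with a citation to \cite{Sch}, Proposition~5.13, and immediately followed by the corollary, with no intervening proof environment. So there is nothing in the paper to compare your argument against beyond the reference itself.

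That said, your proposal reproduces Scholze's original argument. Part~(i) is exactly right: Frobenius induces the zero map on $\Omega^1$ of a polynomial algebra since $d(x^p)=0$, hence $\mathbb{L}\Phi=0$ in $D(R)$; perfection makes $\Phi$ an isomorphism, and a zero isomorphism forces the complex to vanish. For part~(ii), your strategy is also Scholze's, though your write-up is a little tangled. The cleaner way to organize the endgame is to observe that the absolute Frobenius $\Phi_S$ factors as the composite $S \to R_{(\Phi)}\otimes^{\mathbb{L}}_R S \xrightarrow{\Phi_{S/R}} S_{(\Phi)}$; the induced map on $\mathbb{L}_{S/R}$ from $\Phi_S$ is zero by the computation in~(i), while the second arrow is an isomorphism by hypothesis, so the first arrow induces zero on cotangent complexes. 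But the first arrow is a base change along $\Phi_R$, so the induced map is the canonical identification $\mathbb{L}_{S/R} \to \mathbb{L}_{S/R}\otimes^{\mathbb{L}}_{S} (R_{(\Phi)}\otimes^{\mathbb{L}}_R S)$, which on underlying complexes is an isomorphism (the target is just a Frobenius twist). A zero isomorphism again forces $\mathbb{L}_{S/R}\cong 0$. Your base-change formula has a minor slip: it should be $\mathbb{L}_{S/R}\otimes^{\mathbb{L}}_S S'$ rather than $\otimes^{\mathbb{L}}_R R_{(\Phi)}$, though since $R_{(\Phi)}$ has underlying ring $R$ this distinction is largely cosmetic here.
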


\begin{corollary}[\cite{Sch} Corollary 5.16]\label{Cor6.1}
Let $\overline{A}$ be a perfectoid $K^{\circ a}_\Delta/\varpi-$algebra. Then $\mathbb{L}^a_{\overline{A}/(K^{\circ a}_\Delta/\varpi)}\cong 0$.
\end{corollary}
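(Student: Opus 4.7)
The approach is to invoke Proposition 5.13(ii) applied to the structure morphism $K^{\circ a}_\Delta/\varpi \to \overline{A}$ (after passing to $*$-representatives to work in the classical category of rings) and then recover the almost statement by applying the localization functor $(\cdot)^a$. The outline of the argument is entirely parallel to that of Corollary 5.16 in \cite{Sch}.

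The central task is to verify the hypothesis of Proposition 5.13(ii): the relative Frobenius
\begin{align*}
\overline{A} \otimes^{\mathbb{L}}_{K^{\circ a}_\Delta/\varpi,\, \Phi} K^{\circ a}_\Delta/\varpi \longrightarrow \overline{A}_{(\Phi)}
\end{align*}
must be an almost quasi-isomorphism in the derived category. Since $\overline{A}$ is flat over $K^{\circ a}_\Delta/\varpi$ by definition of a perfectoid algebra (and verified via Lemma \ref{lemma5.1}), the derived tensor product reduces to the ordinary one. The required almost isomorphism then follows by combining the two perfectoid conditions $\Phi: \overline{A}/\varpi^{1/p} \xrightarrow{\sim} \overline{A}$ and $\Phi: (K^{\circ a}_\Delta/\varpi)/\varpi^{1/p} \xrightarrow{\sim} K^{\circ a}_\Delta/\varpi$ (the latter inherited from the perfectoid property of $K^{\circ a}_\Delta$ itself). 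These two Frobenius twists assemble in the relative Frobenius so that the $\varpi^{1/p}$-kernels of the absolute Frobenii---which are individually not almost zero---cancel out, yielding a genuine almost isomorphism.

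I expect the main obstacle to be this explicit cancellation calculation, which requires careful bookkeeping of how the relative Frobenius is built from the two absolute Frobenii and of how the flatness of $\overline{A}$ interacts with the Frobenius base change. Once the relative Frobenius is shown to be an almost isomorphism, Proposition 5.13(ii), applied in the classical setting and then localized to the almost category, delivers $\mathbb{L}^a_{\overline{A}/(K^{\circ a}_\Delta/\varpi)} \cong 0$ at once.

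A cleaner alternative, if one prefers to decouple the two perfectoid conditions, is to use the transitivity triangle
\begin{align*}
\overline{A} \otimes^{\mathbb{L}}_{K^{\circ a}_\Delta/\varpi} \mathbb{L}_{(K^{\circ a}_\Delta/\varpi)/\mathbb{F}_p} \longrightarrow \mathbb{L}_{\overline{A}/\mathbb{F}_p} \longrightarrow \mathbb{L}_{\overline{A}/(K^{\circ a}_\Delta/\varpi)}
\end{align*}
and show that the first two terms are almost zero by applying Proposition 5.13(ii) twice---once to $\mathbb{F}_p \to K^{\circ a}_\Delta/\varpi$ and once to $\mathbb{F}_p \to \overline{A}$---invoking each time the respective Frobenius isomorphism modulo $\varpi^{1/p}$. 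The vanishing of the last term then follows from the long exact sequence.
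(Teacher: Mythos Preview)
Your primary approach is correct and is exactly the argument the paper imports from \cite{Sch}: one checks that the relative Frobenius $\Phi_{\overline{A}/(K^{\circ a}_\Delta/\varpi)}$ is an almost isomorphism by combining the flatness of $\overline{A}$ (so the derived tensor product is underived) with the two Frobenius isomorphisms $\overline{A}/\varpi^{1/p}\xrightarrow{\sim}\overline{A}$ and $(K^{\circ a}_\Delta/\varpi)/\varpi^{1/p}\xrightarrow{\sim}K^{\circ a}_\Delta/\varpi$, and then invokes Proposition~5.13(ii). The ``cancellation'' you anticipate is precisely the identification $(K^{\circ a}_\Delta/\varpi)_{(\Phi)}\otimes_{K^{\circ a}_\Delta/\varpi}\overline{A}\cong \overline{A}/\varpi^{1/p}$, after which the perfectoid condition on $\overline{A}$ finishes the job.

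Your proposed ``cleaner alternative,'' however, does not work. The absolute cotangent complexes $\mathbb{L}_{(K^{\circ}_\Delta/\varpi)/\mathbb{F}_p}$ and $\mathbb{L}_{\overline{A}/\mathbb{F}_p}$ are \emph{not} almost zero, so you cannot kill the third term of the transitivity triangle by killing the first two separately. To see this concretely, take $\overline{A}=K^{\circ}_\Delta/\varpi$ and compute via the tilt: since $K^{\flat\circ}_\Delta$ is perfect, $\mathbb{L}_{K^{\flat\circ}_\Delta/\mathbb{F}_p}\cong 0$, and transitivity for $\mathbb{F}_p\to K^{\flat\circ}_\Delta\to K^{\flat\circ}_\Delta/\varpi^\flat$ gives $\mathbb{L}_{(K^{\circ}_\Delta/\varpi)/\mathbb{F}_p}\cong \mathbb{L}_{(K^{\flat\circ}_\Delta/\varpi^\flat)/K^{\flat\circ}_\Delta}\cong (\varpi^\flat)/(\varpi^\flat)^2[1]\cong (K^{\circ}_\Delta/\varpi)[1]$, which is a free rank-one module and certainly not almost zero. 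The reason Proposition~5.13(ii) fails for $\mathbb{F}_p\to K^{\circ}_\Delta/\varpi$ is that the absolute Frobenius has kernel $\varpi^{1/p}(K^{\circ}_\Delta/\varpi)$, and this ideal is not killed by $\mathfrak{h}$ (for instance $\varpi^{1/p^2}\cdot\varpi^{1/p}=\varpi^{1/p+1/p^2}\neq 0$ in $K^{\circ}_\Delta/\varpi$). The point of the relative argument is exactly that these non-almost-zero kernels on source and target match up and cancel in the relative Frobenius; decoupling them destroys this.
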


We will now prove the second isomorphism.

\begin{proposition}
    The functor 
    \begin{align*}
        K^{\circ a}_\Delta-\Perf&\to (K^{\circ a}/\varpi)-\Perf\\
                              A &\mapsto \overline{A}=A/\varpi
    \end{align*}
    is an equivalence of categories.
\end{proposition}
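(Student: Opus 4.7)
The plan is to mirror Scholze's proof of Proposition 5.17 in \cite{Sch}, using deformation theory controlled by the cotangent complex. The crucial input is Corollary \ref{Cor6.1}, which asserts $\mathbb{L}^a_{\overline{A}/(K^{\circ a}_\Delta/\varpi)} \cong 0$ for every perfectoid $K^{\circ a}_\Delta/\varpi$-algebra $\overline{A}$. Once this vanishing is in hand, both the obstruction to lifting an algebra or a morphism, and the indeterminacy of such lifts, are controlled by almost-zero $\Ext$ groups, forcing the claimed equivalence.

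For essential surjectivity, I would construct a quasi-inverse by inductively lifting $\overline{A}$ to flat $K^{\circ a}_\Delta/\varpi^n$-algebras $A_n$ with $A_1 = \overline{A}$ and $A_{n+1}/\varpi^n \cong A_n$. The obstruction to extending $A_n$ across the square-zero thickening $K^{\circ a}_\Delta/\varpi^{n+1} \twoheadrightarrow K^{\circ a}_\Delta/\varpi^n$ lies in $\Ext^2_{\overline{A}}(\mathbb{L}^a_{\overline{A}/(K^{\circ a}_\Delta/\varpi)}, \varpi^n \overline{A}/\varpi^{n+1})$, and the set of such lifts is a torsor under the corresponding $\Ext^1$; both groups vanish by Corollary \ref{Cor6.1}. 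Setting $A := \varprojlim_n A_n$ yields a $\varpi$-adically complete, flat $K^{\circ a}_\Delta$-algebra (using Lemma \ref{lemma5.1}(iv) for completeness of the almost module), whose Frobenius isomorphism $A/\varpi^{1/p} \cong A/\varpi$ is inherited from that of $\overline{A}$ since every $A_n$ lifts uniquely.

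For fully faithfulness, given perfectoid $K^{\circ a}_\Delta$-algebras $A, B$ with reductions $\overline{A}, \overline{B}$ and a morphism $\overline{f}: \overline{A} \to \overline{B}$, I would inductively produce compatible lifts $f_n : A/\varpi^n \to B/\varpi^n$. Both the obstruction to extending $f_n$ to $f_{n+1}$ and the difference of two such extensions live in almost-zero $\Ext$ groups of $\mathbb{L}^a_{\overline{A}/(K^{\circ a}_\Delta/\varpi)}$ against the coefficient module $\varpi^n B/\varpi^{n+1}$. Thus existence and uniqueness both hold at each finite stage, and the $\varpi$-adic completeness of $B$ promotes the inverse system $(f_n)$ to a unique morphism $f: A \to B$ lifting $\overline{f}$.

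The main obstacle is verifying that the deformation-theoretic machinery of the cotangent complex transports from the classical almost setting over a perfectoid valuation ring to our setting over $(K^\circ_\Delta, \mathfrak{h})$; however, since this pair satisfies Faltings' axioms and the vanishing Corollary \ref{Cor6.1} has already been formulated in the almost category $K^{\circ a}_\Delta$-$\mathrm{Mod}$, the standard formalism applies without modification. A secondary subtlety is to confirm that the inverse limit $A = \varprojlim A_n$ truly defines an object of $K^{\circ a}_\Delta$-$\mathrm{Perf}$ rather than merely a flat $\varpi$-adically complete algebra; this reduces to the Frobenius assertion, which is established mod $\varpi$ by assumption and then propagated through the tower $\{A_n\}$ by the uniqueness of the deformation.
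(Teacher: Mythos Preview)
Your proposal is correct and follows essentially the same strategy as the paper: both invoke deformation theory controlled by the almost cotangent complex, appealing to Corollary~\ref{Cor6.1} to kill the obstruction and indeterminacy groups, then lift $\overline{A}$ inductively through the tower $K^{\circ a}_\Delta/\varpi^n$ and take the inverse limit. The paper's write-up emphasizes re-establishing $\mathbb{L}^a_{A_n/(K^{\circ a}_\Delta/\varpi^n)}\cong 0$ at each stage via a distinguished triangle, while you phrase the obstructions directly as $\Ext$ groups of $\mathbb{L}^a_{\overline{A}/(K^{\circ a}_\Delta/\varpi)}$ and also spell out fully faithfulness explicitly; these are equivalent standard formulations (note the relevant reference in \cite{Sch} is Theorem~5.10, not Proposition~5.17).
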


\begin{proof}
   The same proof as \cite{Sch}, Theorem 5.10, works in this case. Let $\overline{A}$ be a perfectoid $K^{\circ a}_\Delta/\varpi$-algebra. Following \cite{Sch}, we can lift $\overline{A}$ to a flat $(K^\circ_\Delta/\varpi^n)^a$-algebra $\overline{A}_n$ if the almost cotangent complex $\mathbb{L}^a_{\overline{A}_n/(K^\circ_\Delta/\varpi)}$ vanishes.

We will establish the existence of such lifts $\overline{A}_n$ of $\overline{A}$ as flat $(K^{\circ}/\varpi^n)^a$-algebras using induction on $n$. The base case $n=1$ is immediate. Assume that there exists a flat $(K^\circ_\Delta/\varpi^{n-1})^a$-algebra $\overline{A}_{n-1}$ lifting $\overline{A}$ such that $\mathbb{L}^a_{\overline{A}_{n-1}/(K^\circ_\Delta/\varpi^{n-1})^a} = 0$. This vanishing implies the existence of a flat $(K^\circ/\varpi^n)^a$-algebra $\overline{A}_n$ lifting $\overline{A}_{n-1}$, i.e., there exists a short exact sequence:
\begin{align*}
0 &\longrightarrow \overline{A} \longrightarrow \overline{A}_n \longrightarrow \overline{A}_{n-1} \longrightarrow 0.
\end{align*}
Tensoring this sequence with the almost cotangent complex $\mathbb{L}^a_{\bullet/(K^\circ_\Delta/\varpi^n)^a}$, we obtain a distinguished triangle:
\begin{align*}
\mathbb{L}^a_{\overline{A}/(K^\circ_\Delta/\varpi)^a} \longrightarrow \mathbb{L}^a_{\overline{A}_n/(K^\circ_\Delta/\varpi^n)^a} \longrightarrow \mathbb{L}^a_{\overline{A}_{n-1}/(K^\circ_\Delta/\varpi^{n-1})^a} \longrightarrow
\end{align*}
From this distinguished triangle and the inductive hypothesis $\mathbb{L}^a_{\overline{A}_{n-1}/(K^\circ_\Delta/\varpi^{n-1})^a} = 0$, we deduce that $\mathbb{L}^a_{\overline{A}_n/(K^\circ_\Delta/\varpi^n)^a} = 0$. This completes the inductive step and proves the existence of the lifts $\overline{A}_n$. Then $A=\varprojlim\limits_{n} \overline{A}_n$ is a perfectoid $K^{\circ a}_\Delta$-algebra.
\end{proof}

\begin{corollary}
    The functor 
    \begin{align*}
        K_\Delta-\Perf&\to K^\flat_\Delta-\Perf\\
                    R &\mapsto R^\flat:=\varprojlim\limits_{\Phi} R
    \end{align*}
    is an equivalence of categories. This equivalence is called the \emph{tilting equivalence}. Moreover, we have a multiplicative map $\#: R^\flat\to R$ defined by $x^\#=x^{(0)}$, where $R^\flat=\Big\{(x^{(n)}\in \prod\limits_{n\in\mathbb{N}}R\mid (x^{(n+1)})^p=x^{(n)})\Big\}$.
\end{corollary}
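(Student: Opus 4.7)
The equivalence of categories is a formal consequence of Theorem \ref{Theorem 6.2}: composing the chain of equivalences
\begin{align*}
K_\Delta-\Perf \cong K^{\circ a}_\Delta-\Perf \cong K^{\circ a}_\Delta/\varpi-\Perf = K^{\flat\circ a}_\Delta/\varpi^\flat-\Perf \cong K^{\flat\circ a}_\Delta-\Perf \cong K^\flat_\Delta-\Perf
\end{align*}
yields the desired equivalence. What remains is to identify this composite with the concrete functor $R \mapsto \varprojlim_\Phi R$ and to exhibit the multiplicative map $\#$.

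My plan is to trace the image of $R$ through the chain. Applying the first two equivalences sends $R$ to the perfectoid $K^{\circ a}_\Delta/\varpi$-algebra $R^{\circ a}/\varpi$. The middle equality uses the identification $K^\circ_\Delta/\varpi = K^{\flat\circ}_\Delta/\varpi^\flat$ that underlies the construction of the tilt in Section \ref{section1}. Running the equicharacteristic half of the chain backwards produces $R^{\flat\circ a}$ as the $\varpi^\flat$-adic completion of a lift constructed via vanishing of the almost cotangent complex, and by the same argument as used for $K^{\circ\flat}_\Delta$, this is realized as $\varprojlim_{x \mapsto x^p} R^\circ/\varpi$. Next I would apply the analogue of the multiplicative bijection lemma proved for $K^{\circ\flat}_\Delta$ to obtain a multiplicative identification $\varprojlim_{x \mapsto x^p} R^\circ/\varpi \cong \varprojlim_{x \mapsto x^p} R^\circ$ via $(x_n) \mapsto (x^{(n)})$ where $x^{(n)} = \lim_{m \to \infty} \tilde{x}_{n+m}^{p^m}$, transporting the ring structure via the formulas
\begin{align*}
(xy)^{(n)} = x^{(n)} y^{(n)}, \qquad (x+y)^{(n)} = \lim_{m \to \infty}\bigl(x^{(n+m)} + y^{(n+m)}\bigr)^{p^m}.
\end{align*}
Inverting $\varpi^\flat$ then yields $R^\flat \cong \varprojlim_\Phi R$, and the multiplicative map $\#$ is the projection $(x^{(n)})_{n \geq 0} \mapsto x^{(0)}$, whose multiplicativity is immediate from the formula above.

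The main step requiring care is the compatibility of $\varpi^\flat$-localization with the inverse limit in the last stage. Since $R^\circ$ is open and bounded in $R$ and $\varpi^\flat$ admits a compatible system of $p$-power roots in $R^{\flat\circ}$ with $(\varpi^\flat)^\# = \varpi$, one shows that every element of $\varprojlim_\Phi R$ can be written as $(\varpi^\flat)^{-N} y$ for some $y \in \varprojlim_{x \mapsto x^p} R^\circ$, giving
\begin{align*}
\left(\varprojlim_{x \mapsto x^p} R^\circ\right)\left[\tfrac{1}{\varpi^\flat}\right] \cong \varprojlim_{\Phi} R.
\end{align*}
This mirrors the argument already carried out for $K^{\circ\flat}_\Delta \to K^\flat_\Delta$ in Section \ref{section1}, and no new ingredient beyond the perfectoid structure of $R$ is needed.
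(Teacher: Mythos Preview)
Your proposal is correct and follows essentially the same strategy as the paper: deduce the equivalence from Theorem~\ref{Theorem 6.2} and then verify that the composite is computed by $R\mapsto \varprojlim_\Phi R$. The only difference is one of direction and economy. You reconstruct $R^\flat$ by tracing the chain forward---reducing $R$ to $R^{\circ a}/\varpi$, invoking the deformation-theoretic lift on the tilted side, and then identifying the result with $\varprojlim_{x\mapsto x^p} R^\circ$ via the lifting lemma. The paper instead starts from the explicit description $R^\flat=\varprojlim_\Phi R$ and checks in one line that the composite $R^{\flat\circ}\xrightarrow{\#}R^\circ\to R^\circ/\varpi$ is a ring homomorphism with kernel $\varpi^\flat R^{\flat\circ}$, yielding $R^{\flat\circ a}/\varpi^\flat\cong R^{\circ a}/\varpi$ directly; this is precisely the ``middle identification'' that makes the abstract equivalence of Theorem~\ref{Theorem 6.2} match the concrete functor. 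Your route is more explicit about why the inverse-limit formula arises, while the paper's is shorter because it bypasses the deformation step entirely by writing down the required isomorphism at the level of $\bmod\ \varpi$ reductions.
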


\begin{proof}
    To prove the equivalence, we only need to identify the middle part of the chain of isomorphism appearing in the theorem \ref{Theorem 6.2}. But note that the composition of $\#:R^\flat\to R$ and $R\to R/\varpi$ is a ring homomorphism whose kernel is $\varpi^\flat R^{\flat\circ}$. Taking the image under almost functor gives the equality of the middle part.
\end{proof}

\begin{proposition}
    Let $R$ be a perfectoid $K_\Delta-$algebra with tilt $R^\flat$. Then $R$ is a VNR ring if and only if $R^\flat$ is a VNR ring. Moreover, $R$ is a field if and only if $R^\flat$ is a field.
\end{proposition}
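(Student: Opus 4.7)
The plan is to leverage Theorem~\ref{Theorem 6.2} (the tilting equivalence) together with the multiplicative map $\#\colon R^\flat\to R$ coming from $R^\flat=\varprojlim_{\Phi} R$.

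I would first establish a natural bijection between idempotents of $R$ and of $R^\flat$ induced by $\#$. Given an idempotent $e\in R$, the relation $e=e^{p^n}$ for every $n$ shows that $\tilde e:=(e,e,\ldots)$ is an element of $\varprojlim_{\Phi} R = R^\flat$; it is idempotent and satisfies $\tilde e^\# = e$. Conversely, for any idempotent $f\in R^\flat$, multiplicativity of $\#$ gives $(f^\#)^2=(f^2)^\#=f^\#$, so $f^\#$ is idempotent in $R$. These constructions are mutually inverse. In particular, $R$ has only the trivial idempotents iff $R^\flat$ does.

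For the ``field iff field'' statement, suppose $R$ is a field. Then the structure map $K_\Delta\to R$ has integral-domain target, so it factors through $K_\Delta/\mathfrak{P}_s = K_s$ for some $s\in\mathfrak{S}$. Hence $R$ is a perfectoid $K_s$-algebra in the classical sense of \cite{Sch} which happens to be a field; the Banach norm on $R$ is power-multiplicative because $R$ is reduced and multiplicative because $R$ is a Banach field, giving a non-discrete rank-$1$ valuation. Thus $R$ is a perfectoid field over $K_s$, and the classical tilting equivalence produces a perfectoid field $R^\flat$ over $K_s^\flat$, in particular a field. The reverse direction is symmetric.

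For the VNR part, both $R$ and $R^\flat$ are already known to be reduced, so it suffices to show that Krull dimension zero is preserved under tilting. A reduced ring is VNR precisely when each of its localizations at a maximal ideal is a field. Using the idempotent bijection and the fact that for any idempotent $e\in R$ the direct factor $Re$ inherits a perfectoid structure, I would show that the Pierce-type decomposition of $R$ into its connected components matches that of $R^\flat$ under tilting; applying the field-iff-field conclusion to each component then yields the VNR equivalence. The main obstacle will be to justify rigorously that the Pierce decomposition transfers through the tilting equivalence and that the factors $Re$ and $R^\flat \tilde e$ are genuinely matched as perfectoid algebras. One clean way to sidestep this is to use the topological identification of $\spa(R,R^\circ)$ with $\spa(R^\flat,R^{\flat\circ})$ coming from the tilting of perfectoid algebras, from which both the Krull dimension and the property of being a field can be read off directly.
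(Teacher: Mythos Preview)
Your approach is genuinely different from the paper's, and the difference is instructive. The paper argues analytically: it first checks that the family of spectral seminorms $\|\cdot\|_{s,R}$ is multiplicative, then for a non-zero-divisor $x\in R$ it picks $x^\flat\in R^{\flat\circ}$ with $x-(x^\flat)^\#\in\varpi R^\circ$ and shows directly that $x/(x^\flat)^\#$ is a $1$-unit, so that invertibility of $x$ reduces to invertibility of $x^\flat$ in $R^\flat$. The field statement is then read off from the norm family (only one $\|\cdot\|_{s,R}$ survives). Your proposal instead tries to transport the structural characterisation ``reduced of Krull dimension $0$'' through tilting via idempotents and Pierce-type decompositions.

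Your idempotent bijection and the field-iff-field argument are essentially sound (the one point you glide over---that a perfectoid $K_s$-algebra which happens to be a field is a perfectoid field---is true but not automatic; it needs that the spectral seminorm on a uniform Banach ring with a single maximal ideal is multiplicative). The real gap is in the VNR step. A VNR ring need not decompose as a product of fields; its Pierce representation is a sheaf of fields over a Boolean space, and the stalks are \emph{not} direct factors in general. So ``apply the field-iff-field conclusion to each component'' does not literally make sense: the connected pieces $Re$ for idempotents $e$ are still VNR rings, not fields, unless you pass to a limit you never control. Your fallback, reading Krull dimension off the homeomorphism $\spa(R,R^\circ)\cong\spa(R^\flat,R^{\flat\circ})$, is promising but not developed: the support map $\spa\to\spec$ is neither injective nor obviously compatible with specialisation in a way that pins down $\dim=0$ without further argument. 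If you want to salvage the structural route, the clean statement to prove (or cite) is that for any perfectoid ring the map $\mathfrak p\mapsto\{x\in R^\flat:x^\#\in\mathfrak p\}$ is an inclusion-preserving bijection $\spec R\to\spec R^\flat$; that immediately gives dimension preservation and makes your outline work. As written, though, the VNR direction is a sketch with a missing bridge rather than a proof.
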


\begin{proof}
Suppose $R$ is a perfectoid VNR ring, and consider $x=(x^{(n)})_{n}\in R^\flat$. We need to show that either $x$ is a zero divisor or a unit. We have $R^\flat=\varprojlim\limits_{x\mapsto x^p} R$. Suppose $x\in R^\flat$ is not a unit; then $x^\#$ is a zero divisor in $R$. Thus, there exists $y\in R^\flat$ such that $x^\#y^\#=(xy)^\#=0$. Lifting this expression shows that $R^\flat$ is a VNR ring.

Conversely, suppose $R^\flat$ is a VNR ring. We claim that the family of spectral norms
\begin{align*}
\|x\|_{s,R}=\inf\{|\lambda|^{-1}_s\mid \beta_s(\lambda)\in K^\times_s, \lambda x\otimes 1\in R^\circ\otimes_{K^\circ_\Delta}K^\circ_s\}, s\in\mathfrak{S}
\end{align*}
are multiplicative. Consider $x,y\in R$. After multiplying by suitable elements of $K_\Delta\setminus\mathfrak{P}_s$, we can assume that $x,y\in R^\circ\setminus\varpi^{1/p}$. Suppose $x^\flat,y^\flat\in R^{\flat\circ}$ are such that $x-(x^\flat)^\#,y-(y^\flat)^\#\in \varpi R^\circ$. Then we have
\begin{align*}
\|x-(x^\flat)^\#\|_{s,R}\leq |\varpi|\implies \|x^{\flat}\|_{s,R^\flat}=\|x\|_{s,R}.
\end{align*}
Similarly, we have $\|y\|_{s,R}=\|y^\flat\|_{s,R^\flat}$. Also, we have
\begin{align*}
\|xy-(x^\flat)^\#(y^\flat)^\#\|_{s,R}&=\|xy-(x^\flat)^\#y+(x^\flat)^\#y- (x^\flat)^\#(y^\flat)^\#\|_{s,R}\\
&\leq\max\{\|x-(x^\flat)^\#\|_{s,R}\|y\|_{s,R},\|(x^\flat)^\#\|_{s,R}\|y-(y^\flat)^\#\|\}\leq |\varpi|
\end{align*}
This is only possible if $\|xy\|_{s,R}=\|x^\flat y^\flat\|_{s,R^\flat}$. Therefore, for every $s\in\mathfrak{S}$, the spectral norm $\|\bullet\|_{s,R}$ is multiplicative.

Suppose $x\in R$ is such that $x^\flat$ is defined as above. Suppose it is not a zero divisor; then for every $s\in\mathfrak{S}$, we have
\begin{align*}
\left\|1-\frac{x}{(x^\flat)^\#}\right\|_{s,R}=\left\|\frac{(x^\flat)^\#-x}{(x^\flat)^\#}\right\|_{s,R}=\frac{\|(x^\flat)^\#-x\|_{s,R}}{\|(x^\flat)^\#\|_{s,R}}\leq \frac{|\varpi|}{|\varpi|^{1/p}}=|\varpi|^{\frac{p-1}{p}}.
\end{align*}
The topology in $R$ is given by $\|x\|_R=\sup\limits_{s\in\mathfrak{S}}\|x\|_{s,R}$. Thus, $\left\|1-\frac{x}{(x^\flat)^\#}\right\|_R<1$, and hence $\sum\limits_{i=0}^\infty (-1)^i\left(1-\frac{x}{(x^\flat)^\#}\right)^i$ converges to $\frac{(x^\flat)^\#}{x}$. But $R^\flat$ is a perfectoid VNR ring, and $x^\flat$ is a unit in $R^\flat$ as it is not a zero divisor, so we obtain that $x$ is a unit in $R$. If $x$ is a zero divisor, then $x^\flat$ is also a zero divisor. For the second part, note that $R$ is a field if and only if for all $x\in R$, we have $\|x\|_{s,R}$ is zero for all but one $s\in\mathfrak{S}$. Indeed, if $\|\bullet\|_{s,R}$ is zero for all but one $s\in\mathfrak{S}$, say for $t\in\mathfrak{S}$, then $\Supp\|\bullet\|_t=\ker\|\bullet\|_t$ is a prime ideal. But $\bigcap\limits_{s\in\mathfrak{S}}\ker\|\bullet\|_s=\{0\}$, and hence $\ker\|\bullet\|_t=\{0\}$. Therefore, there exist no non-zero zero divisors in $R$, proving every element is a unit. From the first part, we obtain the result.
\end{proof}

\begin{remark}
    This proposition is incomplete in the sense that we do not know if every characteristic zero untilts of a perfectoid VNR ring can be written as a completed tensor product of perfectoid fields.
\end{remark}

\begin{remark}
    The analogue of Proposition 5.17 and Proposition 5.20 of \cite{Sch} holds in this case with the same proof. In particular, if we set $R=K_\Delta\left<T^{1/p^\infty}_1,\cdots,T^{1/p^\infty}_n\right>$. Then $R$ is perfectoid $K_\Delta$-algebra with tilt $K^\flat_\Delta\left<T^{1/p^\infty}_1,\cdots,T^{1/p^\infty}_n\right>$.
\end{remark}

\begin{lemma}
    Let $\overline{A}$ be a perfectoid $K^{\circ a}_\Delta/\varpi-$algebra, and let $\overline{B}$ be a finite etale $\overline{A}-$algebra. Then $\overline{B}$ is a perfectoid $K^{\circ a}_\Delta/\varpi-$algebra.
\end{lemma}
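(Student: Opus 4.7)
The plan is to verify the two defining properties of a perfectoid $K^{\circ a}_\Delta/\varpi$-algebra for $\overline{B}$: almost flatness over $K^{\circ a}_\Delta/\varpi$, and that Frobenius induces an almost isomorphism $\overline{B}/\varpi^{1/p}\xrightarrow{\sim}\overline{B}$. Almost flatness is immediate by composition, since $\overline{B}$ is almost flat over $\overline{A}$ (being finite \'etale) and $\overline{A}$ is almost flat over $K^{\circ a}_\Delta/\varpi$ (being perfectoid).

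For the Frobenius condition, I would factor the absolute Frobenius on $\overline{B}$ through the relative Frobenius. Since $|p|\leq|\varpi|<1$, we have $p=0$ in $K^{\circ a}_\Delta/\varpi$, so we work in equicharacteristic $p$. The standard Frobenius square yields a factorization $\Phi_{\overline{B}} = \Phi_{\overline{B}/\overline{A}}\circ\iota$, with $\iota\colon \overline{B}\to\overline{B}\otimes_{\overline{A},\Phi_{\overline{A}}}\overline{A}$ given by $b\mapsto b\otimes 1$ and $\Phi_{\overline{B}/\overline{A}}\colon \overline{B}\otimes_{\overline{A},\Phi_{\overline{A}}}\overline{A}\to \overline{B}$ the relative Frobenius $b\otimes a\mapsto b^p a$. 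Almost \'etaleness of $\overline{A}\to\overline{B}$ implies that $\Phi_{\overline{B}/\overline{A}}$ is an almost isomorphism --- the characteristic-$p$ analogue of the classical fact that relative Frobenius for an \'etale morphism is an isomorphism.

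On the other side, the perfectoid hypothesis on $\overline{A}$ factors $\Phi_{\overline{A}}$ as $\overline{A}\twoheadrightarrow\overline{A}/\varpi^{1/p}\xrightarrow{\sim}\overline{A}$, so almost flatness of $\overline{B}$ over $\overline{A}$ yields
\[
\overline{B}\otimes_{\overline{A},\Phi_{\overline{A}}}\overline{A} \cong \overline{B}\otimes_{\overline{A}}\overline{A}/\varpi^{1/p} = \overline{B}/\varpi^{1/p}.
\]
Under this identification, $\iota$ becomes the canonical quotient and $\Phi_{\overline{B}/\overline{A}}$ is identified with the map $\overline{B}/\varpi^{1/p}\to\overline{B}$ induced by $\Phi_{\overline{B}}$. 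Since the latter is an almost isomorphism by \'etaleness, we obtain the required Frobenius condition for $\overline{B}$.

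The main obstacle is justifying that a finite almost \'etale morphism in characteristic $p$ has almost-isomorphic relative Frobenius. In the almost setting, \'etaleness is defined via flatness plus the existence of a diagonal idempotent (Definition (v)--(vi) of the excerpt), so one must recover the Frobenius-isomorphism statement from this. The cleanest route uses the vanishing of the almost cotangent complex $\mathbb{L}^a_{\overline{B}/\overline{A}}$ for almost \'etale $\overline{A}\to\overline{B}$ together with the base-change distinguished triangle for Frobenius --- analogously to Corollary~\ref{Cor6.1} --- which, combined with almost flatness, forces the relative Frobenius to be an almost isomorphism. This is a standard (if technical) piece of Faltings' almost mathematics, and its rigorous verification is the bulk of the work; once granted, the argument concludes in parallel with the proof of Proposition~\ref{Propostion 6.3}.
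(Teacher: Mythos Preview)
Your proposal is correct and follows precisely the approach of Scholze's Proposition~5.22, to which the paper's proof simply defers. The one simplification available to you: the key step---that the relative Frobenius of an almost \'etale morphism in characteristic $p$ is an almost isomorphism---is exactly Theorem~3.5.13 of \cite{ART} (already invoked elsewhere in the paper), so you can cite it directly rather than rederiving it via cotangent-complex vanishing.
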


\begin{proof}
    The proof follows the same line of reasoning as Proposition 5.22 of \cite{Sch}.
\end{proof}

\begin{proposition}[Almost purity over $K_\Delta$ of characteristics $p$]
    Let $K_\Delta$ be a perfectoid $\Delta$-field of characteristics $\say{p}$ and let $R$ be a perfectoid $K_\Delta-$algebra, and let $S/R$ be finite \'{e}tale. Then $S$ is perfectoid and $S^{\circ a}$ is finite \'{e}tale over $R^{\circ a}$. Moreover, $S^{\circ a}$ is a uniformly finite projective $R^{\circ a}-$module.
\end{proposition}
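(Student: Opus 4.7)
The plan is to follow the classical characteristic $p$ strategy of Scholze (Proposition 5.23 of \cite{Sch}), exploiting that every perfectoid $K_\Delta$-algebra in characteristic $p$ is perfect, which bypasses deformation theory entirely. The main inputs available to us are the earlier characterization of characteristic-$p$ perfectoid algebras as perfect Banach algebras, the equivalence of categories of Theorem \ref{Theorem 6.2}, and the finite-generation properties of integral closures under finite étale maps.

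First I would establish that $S$ is perfectoid. Since $R$ is perfect and the relative Frobenius of any étale morphism is an isomorphism, composing with the isomorphism $\Phi_R$ shows $\Phi_S$ is an isomorphism, hence $S$ is perfect. I take $S^\circ$ to be the integral closure of $R^\circ$ in $S$; the finite étale hypothesis ensures $S^\circ$ is a finitely generated $R^\circ$-module, hence bounded, and it is open since it contains an $R$-module basis of $S$. Frobenius surjectivity on $S^\circ/\varpi$ then follows automatically from perfectness of $S$: for $x\in S^\circ$, the unique $p$-th root $y\in S$ satisfies $y^p - x = 0$, and integral closedness of $S^\circ$ in $S$ forces $y\in S^\circ$.

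Next I would verify that $S^{\circ a}$ is finite étale over $R^{\circ a}$ via the trace pairing. The finite étale structure of $S/R$ provides a non-degenerate pairing $\mathrm{tr}_{S/R}\colon S\otimes_R S\to R$, which restricts to $S^\circ\otimes_{R^\circ}S^\circ\to R^\circ$ because traces of integral elements are integral. The crucial claim is that the discriminant of this restricted pairing becomes an almost unit in $R^{\circ a}$. Granting this, the diagonal idempotent $e\in (S^{\circ a}\otimes_{R^{\circ a}}S^{\circ a})_*$ exists, establishing unramification; flatness of $S^{\circ a}$ over $R^{\circ a}$ follows from torsion-freeness via Lemma \ref{lemma5.1}. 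Uniform finite projectivity then follows because the rank of $S$ over $R$ is a bounded integer $n$ on the connected components of $\spec R$.

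The main obstacle is proving almost invertibility of the discriminant. In the classical perfectoid setting this rests on perfectness of $R$: any element generating the discriminant admits $p^n$-th roots in $R^\circ$ whose valuations force them into $\mathfrak{h}\cdot R^\circ$ eventually, making the discriminant an almost unit. In our multivariate setup the same argument applies fiberwise at each $s\in\mathfrak{S}$ (where $R\otimes_{K_\Delta}K_s$ is a classical perfectoid algebra), and patches via the family of norms $(\|\bullet\|_s)_{s\in\mathfrak{S}}$ to conclude almost invertibility of the discriminant globally in $R^{\circ a}$.
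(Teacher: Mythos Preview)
Your overall plan---perfectness of $R$ forces perfectness of $S$, then extract $p$-th roots to make the integral data ``almost'' as good as the generic data---is the same as the paper's, but two of your key steps are circular or incorrect as stated.

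\textbf{The finite-generation claim in Step 1 is the theorem, not a hypothesis.} You write that ``the finite \'etale hypothesis ensures $S^\circ$ is a finitely generated $R^\circ$-module.'' It does not. The hypothesis is that $S$ is finite projective over $R$; passing from $R$ to $R^\circ$ and taking the integral closure $Y$ of $R^\circ$ in $S$ gives something that is only \emph{almost} finitely generated, and that is precisely the content of almost purity. The paper avoids this circularity: it picks an arbitrary finite $R^\circ$-submodule $S_\circ\subset S$ with $S_\circ[\varpi^{-1}]=S$, defines the topology via $S_\circ$, and then sandwiches the integral closure $Y$ between $S_\circ$ and the trace-dual $S_\circ^\perp=\{x\in S\mid t_{S/R}(x,S_\circ)\subset R^\circ\}$. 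Both $S_\circ$ and $S_\circ^\perp$ are open and bounded, hence so is $Y$; no finite-generation of $Y$ is claimed or needed at this stage. Your argument for openness (``contains an $R$-module basis'') is also insufficient---containing a basis of $S$ over $R$ does not by itself give an open subgroup.

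\textbf{The discriminant is not an almost unit.} You claim that taking $p^n$-th roots of the discriminant $d\in R^\circ$ eventually shows $d$ is an almost unit. But a fixed discriminant (for a fixed basis in $S^\circ$) need not satisfy $\mathfrak{h}\subset dR^\circ$: nothing rules out, say, $|d|=|\varpi|$, and $\varpi$ is certainly not an almost unit. What perfectness actually buys you is that you can \emph{change the basis}: replacing $e_1,\dots,e_n$ by $e_1^{1/p^m},\dots,e_n^{1/p^m}$ gives a new $R^\circ$-lattice whose discriminant is $d^{1/p^m}$, and for $m$ large this dominates any given $\epsilon\in\mathfrak{h}$. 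The paper packages this more cleanly via the diagonal idempotent: write $\varpi^N e=\sum_{i=1}^n x_i\otimes y_i\in S^\circ\otimes_{R^\circ}S^\circ$, apply $\Phi^{-m}$ to get $\varpi^{N/p^m}e=\sum x_i^{1/p^m}\otimes y_i^{1/p^m}$, and read off both unramifiedness (since $e$ is then an almost element of $S^\circ\otimes_{R^\circ}S^\circ$) and uniform finite projectivity (via the maps $s\mapsto(t_{S/R}(s,y_i^{1/p^m}))_i$ and $(r_i)\mapsto\sum x_i^{1/p^m}r_i$, whose composite is multiplication by $\varpi^{N/p^m}$, with the same $n$ for all $m$). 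Your fiberwise patching over $s\in\mathfrak{S}$ is unnecessary once you argue this way.
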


\begin{proof}
    The same proof of Proposition 5.23 of \cite{Sch}, but we will recall the proof for completeness. We will prove this in steps:\\
    \textbf{Step-1: $S$ is perfectoid over $K_\Delta$:} Since $R$ is perfectoid of characteristics $p$, it is a perfect $\mathbb{F}_p-$algebra, meaning the Frobenius endomorphism $\Phi:R\to R$ is bijective. From Theorem 3.5.13 of \cite{ART} applied to the classical case, the diagram
    \[
    \begin{tikzcd}
        R\arrow[r]\arrow[d,"\Phi"] & S\arrow[d,"\Phi"]\\
        R\arrow[r]           & S
    \end{tikzcd}
    \]
    is cocartesian, showing $\Phi:S\to S$ is a bijective. We can define topology on $S$ if we can define a topology for a free $ R-$module $S$ because $S$ is a projective $R-$module, i.e., a direct summand of a free module. For doing so, we will define an analogue of Definition 5.4.10 \cite{ART}. Let $I$ be the ideal of definition of $R^\circ$, and $S_\circ$ is a finite $R^\circ-$module such that $S_\circ\left[\frac{1}{\varpi}\right]=S$. Then by declaring the family of submodules $\{\varpi^{n}IS_\circ\}_{n\in\mathbb{N}}$ as a fundamental system of neighborhoods of $0$ which gives a topology on $S_\circ$ for which it is $I-$adically complete. The topology on $S$ is the finest topology such that the canonical inclusion $S_\circ\to S$ is an open map. Therefore $\{\varpi^n IS_0\}_{(n)\in\mathbb{N}}$ is a fundamental system of neighborhood of $0$ in $S$. We have to show that this topology is independent of the choice of $S_\circ$. Suppose $S'_\circ$ is another finite $R^\circ-$module with $S'_\circ\left[\frac{1}{\varpi}\right]=S$. Then for some $m>>0$, we have $\varpi^mS_\circ\subset S'_\circ$ and for some $n>>0$ with $\varpi^n S'_\circ\subset S_\circ$, proving the independence. In order to show $S$ is perfectoid, we need to show that the ring of power-bounded elements $S^\circ$ is open and bounded. For this, we will fix a finitely generated $R^\circ-$algebra $S_\circ$ as above and define
    \begin{align*}
        S^\perp_\circ:=\{x\in S\mid t_{S/R}(x,S_0)\subset R^\circ\}
    \end{align*}
    where $t_{S/R}:S\otimes_R S\to R$ is the perfect trace pairing. Then $S_\circ$ and $S^\perp_\circ$ are open and bounded. Suppose $Y$ is the integral closure of $R^\circ$ in $S$. As $S_\circ$ is a finite $R^\circ-$module, the elements of $S_\circ$ are integral and also trace maps integral elements to integral elements, we have $S_\circ\subset Y\subset S^\perp_\circ$. Therefore, $Y$ is open and bounded. But $S^{\circ a}=Y^a$, it follows that $S^\circ$ is open and bounded, hence $S$ is a perfectoid $K_\Delta$-algebra.\\
    \textbf{Step-2: $S^{\circ a}$ is a uniformly finite projective $R^{\circ a}-$module:} Since $S/R$ is unramified as it is \'etale, there exists an idempotent $e\in S\otimes_R S$. As $S=S^\circ\left[\frac{1}{\varpi}\right]$, there exists some $N$ large enough so that $\varpi^N e\in S^\circ\otimes_{R^\circ}S^\circ$ for all $\alpha\in\Delta$. Thus we can write $\varpi^N e=\sum\limits_{i=1}^n x_i\otimes y_i$, $\alpha\in\Delta$. Since Frobenius is bijective, we get $\varpi^{N/p^m} e=\sum\limits_{i=1}^n x^{1/p^m}_i\otimes y^{1/p^m}_i$ for all $m$. Thus for every $\epsilon\in\mathfrak{h}$, we can write $\epsilon e=\sum\limits_{i=1}^n a_i\otimes b_i$ for some $a_i,b_i\in S^\circ$ depending on $\epsilon$. Therefore, we get maps
    \begin{align*}
        S^\circ&\to R^{\circ n}\\
        s&\mapsto (t_{S/R}(s,b_1),\cdots,t_{S/R}(s,b_n))
    \end{align*}
    and
    \begin{align*}
        R^{\circ n}&\to S\\
        (r_1,\cdots,r_n)&\mapsto \sum\limits_{i=1}^n a_ir_i
    \end{align*}
    Then their composition is the multiplication by $\epsilon$ proving the fact that $S^{\circ a}$ is uniformly finite projective $R^{\circ a}-$module.\\
    \textbf{Step-3: $S^{\circ a}$ is an unramified $R^{\circ a}-$algebra:} By previous argument, $e$ defines an almost element with $e^2=e, \mu(e)=1$ and $xe=0$ for every $x\in(\ker\mu)_*$.
\end{proof}

\begin{theorem}[\cite{Sch},Theorem 4.17 ]
	Let $A$ be a $K^{\circ a}_\Delta$-algebra. Suppose that $A$ is flat over $K^{\circ a}_\Delta$ and $\varpi$-adically complete. Then the functor
	\[
	B \mapsto B \otimes_A A/\varpi
	\]
	induces an equivalence of categories
	\[
	A_{f\acute{e}t} \cong (A/\varpi)_{f\acute{e}t}.
	\]
	Any $B\in A_{\fet}$ is again flat over $K^{\circ a}_\Delta$ and $\varpi-$adically complete. Moreover, $B$ is a uniformly finite projective $A-$module if and only if $B \otimes_A A/\varpi$ is a uniformly finite projective $A/\varpi$-module.
\end{theorem}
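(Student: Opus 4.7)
The plan is to adapt the proof of Scholze's Theorem 4.17 in \cite{Sch} to our multivariate almost setting over $(K^\circ_\Delta, \mathfrak{h})$. I would factor the target equivalence through the tower of truncations $\{A/\varpi^n\}_{n\geq 1}$: first establish equivalences $(A/\varpi^{n+1})_{\fet} \cong (A/\varpi^n)_{\fet}$ for every $n$, then take the inverse limit to obtain $A_{\fet} \cong (A/\varpi)_{\fet}$ using the identification $A = \varprojlim_n A/\varpi^n$, which follows from $\varpi$-adic completeness via Lemma \ref{lemma5.1}(iv). The flatness of the lift $B$ over $K^{\circ a}_\Delta$ is automatic from the flatness of $B$ over $A$ (being finite \'etale) composed with the flatness of $A$ over $K^{\circ a}_\Delta$.

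The heart of the argument is the single-step deformation. Given a finite \'etale $(A/\varpi^n)$-algebra $\overline{B}_n$, the obstruction to the existence and uniqueness of a finite \'etale lift $\overline{B}_{n+1}$ over $A/\varpi^{n+1}$ lies in $\Ext^i$-groups computed against the almost cotangent complex $\mathbb{L}^a_{\overline{B}_n/(A/\varpi^n)}$. For an almost \'etale morphism this cotangent complex vanishes --- concretely, the idempotent $e\in(B\otimes_A B)_*$ witnessing unramifiedness splits the multiplication map and forces the module of almost K\"ahler differentials to vanish. Both the obstruction group and the ambiguity group then collapse, yielding a unique lift both for objects and for morphisms. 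Iterating along the tower and taking the inverse limit produces the desired $B \in A_{\fet}$, whose $\varpi$-adic completeness follows immediately from the construction together with Lemma \ref{lemma5.1}(iv). The same vanishing gives full faithfulness by lifting homomorphisms step by step through the tower.

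For the uniform finite projectivity statement, I would again exploit the idempotent $e \in (B\otimes_A B)_*$. Writing $\epsilon e = \sum_{i=1}^n a_i \otimes b_i$ for each $\epsilon \in \mathfrak{h}$ gives a presentation of the multiplication-by-$\epsilon$ map on $B_*$ factoring through a free $A_*$-module of rank $n$, exactly as in the almost purity proof above; this bound on the number of generators descends to $B\otimes_A A/\varpi$ and, conversely, lifts back via $\varpi$-adic approximation using flatness and completeness, giving the claimed equivalence of the two notions of uniform finite projectivity.

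The main obstacle I anticipate is not the formal deformation theory itself --- which is essentially categorical once the almost setup is in place --- but rather verifying that the whole Gabber--Ramero--Scholze machinery transfers to the pair $(K^\circ_\Delta, \mathfrak{h})$, given that $K^\circ_\Delta$ is non-Noetherian, has Krull dimension one with infinitely many maximal and minimal primes indexed by $\mathfrak{K}$ and $\mathfrak{S}$, and fails to be a valuation ring. Because $\mathfrak{h}$ is flat with $\mathfrak{h}^2=\mathfrak{h}$ and $K^\circ_\Delta[\varpi^{-1}]=K_\Delta$, the axioms of \cite{GFal,ART} are satisfied, so the transfer reduces to bookkeeping, but it remains the step that must be checked most carefully.
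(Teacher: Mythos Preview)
Your proposal is correct, and the deformation-theoretic strategy you outline is precisely the engine behind the Gabber--Ramero result that the paper invokes. However, the paper's own proof is far more economical: it simply observes that $I_\varpi=(\varpi)$ is a \emph{tight} ideal in the sense of \cite[Definition~5.1.5]{ART} and then cites \cite[Theorem~5.3.27]{ART} directly to obtain the equivalence. In other words, the paper does exactly what you anticipate in your final paragraph --- verifying the basic almost setup $(K^\circ_\Delta,\mathfrak{h})$ satisfies the Gabber--Ramero axioms and then appealing to their general theorem --- rather than re-running the cotangent-complex argument by hand.

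The comparison, then, is one of packaging rather than substance. Your approach is more self-contained and makes the mechanism visible (vanishing of $\mathbb{L}^a$ for almost \'etale maps, step-by-step lifting along the square-zero tower, inverse limit); the paper's approach is a one-line citation once the tightness of $(\varpi)$ is noted. The one technical datum you might add, if you keep your version, is precisely this tightness condition: it is what guarantees that the Henselian/deformation machinery of \cite{ART} applies to the $\varpi$-adic tower without further hypotheses, and it is the only point where the specific shape of the ideal $(\varpi)$ enters beyond the general almost axioms $\mathfrak{h}^2=\mathfrak{h}$ and flatness of $\mathfrak{h}$.
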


\begin{proof}
  Since $I_\varpi=(\varpi)$ is a tight ideal in the sense of Definition~5.1.5 in \cite{ART}, Theorem~5.3.27 of \cite{ART} yields the equivalence of categories as stated in \cite{Sch}. 
\end{proof}  

Similar to \cite{Sch}, we have the following diagram

\[
\begin{tikzcd}
   R_{\fet} &\arrow[l] A_{\fet}\arrow[d]\arrow[r,"\cong"] & \overline{A}_{\fet}\arrow[d] & \arrow[l,swap,"\cong"] A^\flat_{\fet}\arrow[d] \arrow[r,"\cong"] & R^\flat_{\fet}\arrow[d]\\
   K_\Delta-\Perf  &\arrow[l,swap,"\cong"]K^{\circ a}_\Delta-\Perf\arrow[r,"\cong"] & (K^{\circ a}_\Delta/\varpi)-\Perf &\arrow[l,swap,"\cong"] K^{\flat\circ a}_\Delta-\Perf &\arrow[l,swap,"\cong"] K^\flat_\Delta-\Perf 
\end{tikzcd} 
\]


\section{Perfectoid Spaces over $K_\Delta$}

Through this section, we fix a perfectoid $\Delta$-field $K_\Delta$, a pseudo-uniformizing ideal $I_\varpi$ generated by $\varpi$. Furthermore, assume $|\varpi_\alpha|$ is all the same for every $\alpha\in\Delta$. We denote the corresponding tilt of $K_\Delta$ by $K^\flat_\Delta$, and the corresponding ideal $I^\flat_\varpi$ generated by  $\varpi^\flat$, respectively.

\begin{definition}
    An affinoid $K_\Delta-$algebra $(R,R^+)$ is called perfectoid if $R$ is a perfectoid $K_\Delta-$algebra.
\end{definition}

\begin{lemma}[Tilting Equivalence]
   The functor
   \begin{align*}
       \text{(Affinoid Perfectoid $K_\Delta$-algebras)}&\to \text{(affinoid perfectoid $K^\flat_\Delta$-algebras)}\\
       (R,R^+)&\mapsto (R^\flat,R^{\flat+})
    \end{align*}
   is an equivalence of categories. This correspondence induces isomorphism $R^{\flat+}/\varpi^\flat\cong R^+/\varpi$ and $R^{\flat+}=\varprojlim\limits_{x\mapsto x^p}R^+$.
\end{lemma}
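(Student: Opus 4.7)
The plan is to bootstrap from the tilting equivalence for perfectoid algebras already established as Theorem~\ref{Theorem 6.2}, and to match the $R^+$-structures via reduction modulo the pseudo-uniformizing ideal. The key auxiliary observation I would establish first is that open integrally closed subrings $R^+ \subset R^\circ$ containing $\varpi$ are in bijection with integrally closed subrings $\bar R^+ \subset R^\circ/\varpi$, via $R^+ \mapsto R^+/\varpi$ with inverse $\bar R^+ \mapsto \{x \in R^\circ \mid x \bmod \varpi \in \bar R^+\}$. Openness of the lift is immediate since it contains $\varpi R^\circ$; that the lift is integrally closed in $R^\circ$ follows from a $\varpi$-adic successive approximation argument, using that $R^\circ$ is itself integrally closed in $R$ and $\varpi$-adically complete. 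The same statement applies on the tilted side, with $\varpi$ replaced by $\varpi^\flat$.

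Given an affinoid perfectoid $(R,R^+)$, I would then define $R^{\flat+} := \varprojlim_{x \mapsto x^p} R^+$, regarded as a subset of $R^{\flat\circ} = \varprojlim_{x \mapsto x^p} R^\circ$. Three checks are needed: (i) $R^{\flat+}$ is a subring, which is straightforward from the tilt's ring operations since $R^+$ is closed under sums, products, and the multiplicative structure compatible with the inverse limit; (ii) $R^{\flat+}$ is open, because it contains $I^\flat_\varpi$; and (iii) $R^{\flat+}$ is integrally closed in $R^{\flat\circ}$, which follows by projecting any integrality relation to each coordinate and invoking integral closure of $R^+$ in $R^\circ$. The isomorphism $R^{\flat+}/I^\flat_\varpi \cong R^+/I_\varpi$ would then follow from the identification $R^{\flat\circ}/\varpi^\flat \cong R^\circ/\varpi$ exploited in the proof of Theorem~\ref{Theorem 6.2}: under the map $(x^{(n)})_n \mapsto x^{(0)}$, one verifies that $R^{\flat+}$ is exactly the preimage of $R^+/\varpi$, because once $x^{(0)} \in R^+$, each $x^{(n)}$ is a $p^n$-th root of $x^{(0)}$ sitting in $R^\circ$ and hence integral over $R^+$, so lies in $R^+$ by its integral closure.

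For the quasi-inverse, given $(S,S^+)$ an affinoid perfectoid $K^\flat_\Delta$-algebra, I would untilt $S$ via Theorem~\ref{Theorem 6.2} to a perfectoid $K_\Delta$-algebra $R$ with $R^\flat \cong S$, transport the integrally closed subring $S^+/\varpi^\flat \subset S^\circ/\varpi^\flat$ to an integrally closed $\bar R^+ \subset R^\circ/\varpi$ via the canonical identification of these quotients, and then lift via the auxiliary observation to a unique open integrally closed $R^+ \subset R^\circ$. That this produces a quasi-inverse is then a matter of tracing definitions through the chain of equivalences in Theorem~\ref{Theorem 6.2}, and functoriality comes for free from the fact that any continuous $K_\Delta$-algebra morphism $R_1 \to R_2$ between perfectoid algebras automatically sends $R_1^\circ$ into $R_2^\circ$, so compatibility with the $R^+$-structures reduces to a condition on reductions mod $\varpi$, which is already built into the tilted data.

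The main obstacle will be the auxiliary observation about lifting integrally closed subrings from $R^\circ/\varpi$ back to $R^\circ$: promoting a monic identity mod $\varpi$ to a genuine monic identity in $R^\circ$ requires a Hensel-type iteration exploiting $\varpi$-adic completeness of $R^\circ$, and this is the only step that does not drop out immediately from the perfectoid tilting machinery already developed. Once this bijection between $R^+$'s and their reductions is in place, the remainder of the proof is routine bookkeeping driven by Theorem~\ref{Theorem 6.2}.
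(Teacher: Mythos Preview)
Your strategy is correct and essentially parallel to the paper's: both reduce the classification of rings of integral elements $R^+$ to data living in a quotient of $R^\circ$, transport that data across the isomorphism $R^\circ/\varpi \cong R^{\flat\circ}/\varpi^\flat$ coming from Theorem~\ref{Theorem 6.2}, and lift back. The difference is in the choice of quotient. You work modulo $\varpi$; the paper instead observes the chain $\mathfrak{h}R^\circ \subset R^{\circ\circ} \subset R^+ \subset R^\circ$ (topologically nilpotent elements are automatically integral) and works modulo $\mathfrak{h}$, so that open integrally closed subrings of $R^\circ$ correspond to integrally closed subrings of $R^\circ/\mathfrak{h}$. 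Since $\varpi\in\mathfrak{h}$, the identification $R^\circ/\varpi\cong R^{\flat\circ}/\varpi^\flat$ descends to one modulo $\mathfrak{h}$ and $\mathfrak{h}^\flat$, and the bijection follows.

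What the paper's choice buys is that your flagged ``main obstacle'' evaporates. In fact it already evaporates in your setup: the preimage in $R^\circ$ of an integrally closed subring of $R^\circ/\varpi$ is automatically integrally closed in $R^\circ$, simply because if $y\in R^\circ$ satisfies a monic relation over the preimage then its reduction satisfies the reduced monic relation, hence lies in $\bar R^+$, hence $y$ lies in the preimage. No Hensel iteration or $\varpi$-adic approximation is needed; you have overestimated the difficulty of this step. The paper's use of $\mathfrak{h}$ makes this triviality structurally transparent (the kernel is already contained in every $R^+$), but your mod-$\varpi$ version works for the same reason. The rest of your outline, including the explicit description $R^{\flat+}=\varprojlim_{x\mapsto x^p}R^+$ and the functoriality argument, is fine.
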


\begin{proof}
    The proof follows the same line as in Lemma 6.2 of \cite{Sch}. Note that $\mathfrak{h}R^\circ \subset R^{\circ\circ}_\Delta \subset R^+ \subset R^\circ$, because topologically nilpotent elements are integral. Also, $\mathfrak{h}$ is the Jacobson radical of $K^\circ_\Delta$, and hence we have $\mathfrak{h}^\flat$ as the Jacobson radical of $K^{\circ\flat}_\Delta$. Using the above observation, we see that open integrally closed subrings of $R^\circ$ are in one-to-one correspondence with integrally closed subrings of $R^\circ/\mathfrak{h}$. Since $\varpi \in \mathfrak{h}$ and $\varpi^\flat \in \mathfrak{h}^\flat$, and $R^\circ/\varpi \cong R^{\circ\flat}/\varpi^\flat$, we deduce that integrally closed subrings of $R^\circ/\mathfrak{h}$ are in one-to-one correspondence with the integrally closed subrings of $K^{\flat\circ}/\mathfrak{h}^\flat$. Thus, this correspondence lifts to the other side, and we obtain a bijection between $R^+$ and $R^{\flat+}$.
\end{proof}

\begin{remark}\label{rationalsubsetmodification}
    If $U=U\left(\frac{f_1,\cdots.f_n}{g}\right)$, then to $f_1,\cdots f_n$ one can add $f_{n+1}=\varpi^N$ for a large $N$ without changing the rational subsets. Indeed, there exists $h_1,\cdots,h_n\in A$ such that $\sum\limits_{i=1}^n h_if_i=1$. Multiplying by $\varpi^N$ for some large $N$, we can assume $\varpi^N f_i\in R^+$ as $R^+\subset R$ is open. It is clear that $U\left(\frac{f_1,\cdots,f_{n+1}}{g}\right)\subseteq U\left(\frac{f_1,\cdots,f_n}{g}\right)=U$. Suppose $x\in U$, then 
    \begin{align*}
        |\varpi^N(x)|=\left|\sum\limits_{i=1}^n\varpi^N h_if_i\right|\leq \max|(\varpi^N h_i)(x)||f_i(x)|\leq |g(x)|.
    \end{align*}
    Thus $x\in U\left(\frac{f_1,\cdots,f_{n+1}}{g}\right)$ proving equality.
\end{remark}

Suppose $R$ is a perfectoid $K_\Delta$ algebra and $R^\flat$ is the tilt of $R$. We have a natural continuous map $\phi: \mathrm{Spa}(R, R^+) \to \mathrm{Spa}(R^\flat, R^{\flat+})$ defined as $\phi(x) = x^\flat$ with $|\bullet(x^\flat)| = |\bullet^\#(x)|$. Using Remark \ref{rationalsubsetmodification}, we see that this is a continuous morphism. Indeed, if $U = U\left(\frac{f_1, \cdots, f_n}{g}\right)$ is a rational subset of $\mathrm{Spa}(R^\flat, R^{\flat+})$ with $f_n = \varpi^{\flat N}$, then the inverse image of $U$ is given by $U^\# = U\left(\frac{f^\#_1, \cdots, f^\#_n}{g^\#}\right)$, which is rational as $f^\#_n = \varpi^N$ so that $f^\#_1R + \cdots + f^\#_nR = R$. We want to show that this is a homeomorphism identifying rational subsets. Along the same lines, we will prove the following lemmas as in \cite{Sch}.

\begin{lemma}[Lemma 6.4 \cite{Sch}]\label{Almost Rational Subset}
   Let $U=U\left(\frac{f_1,\cdots,f_n}{g}\right)\subset\spa(R^\flat,R^{\flat+})$ be a rational subset, with preimage $U^{\#}\subset\spa(R,R^+)$. Assume that all $f_i,g\in R^{\flat\circ}$ and that $f_n=\varpi^{\flat N/t}$ for some $N\in t\mathbb{N}$.
\begin{enumerate}
    \item[(i).] Consider the $\varpi-$adic completion
    \begin{align*}
        R^\circ\left<\left(\frac{f^\#_1}{g^\#}\right)^{1/p^\infty},\cdots,\left(\frac{f^\#_n}{g^\#}\right)^{1/p^\infty}\right>
    \end{align*}
    of the subring
    \begin{align*}
        R^\circ\left[\left(\frac{f^\#_1}{g^\#}\right)^{1/p^\infty},\cdots,\left(\frac{f^\#_n}{g^\#}\right)^{1/p^\infty}\right]\subset R\left[\frac{1}{g^\#}\right].
    \end{align*}
    Then $R^\circ\left<\left(\frac{f^\#_1}{g^\#}\right)^{1/p^\infty},\cdots,\left(\frac{f^\#_n}{g^\#}\right)^{1/p^\infty}\right>^a$ is a perfectoid $K^{\circ a}_\Delta-$algebra.
    \item[(ii).] The algebra $\mathcal{O}_X(U^\#)$ is a perfectoid $K_\Delta-$algebra, with associated perfectoid $K^{\circ a}_\Delta-$algebra
    \begin{align*}
        \mathcal{O}_X(U^\#)^{\circ a}&=R^\circ\left<\left(\frac{f^\#_1}{g^\#}\right)^{1/p^\infty},\cdots,\left(\frac{f^\#_n}{g^\#}\right)^{1/p^\infty}\right>^a.
    \end{align*}
    \item[(iii).] The tilt of $\mathcal{O}_X(U^\#)$ is given by $\mathcal{O}_{X^\flat}(U)$.
\end{enumerate}
\end{lemma}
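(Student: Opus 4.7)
The plan is to adapt the proof of Lemma~6.4 in~\cite{Sch} to the multivariate setting by first constructing the perfectoid algebra abstractly, identifying its Tate localization with the structure presheaf value on $U^\#$, and then transporting to the tilt. By Remark~\ref{rationalsubsetmodification} applied on the tilted side, enlarging the tuple so that $f_n = \varpi^{\flat N/t}$ lies in $R^{\flat\circ}$ does not alter $U$, and it guarantees that the ideal $(f_1^\#,\ldots,f_n^\#)$ contains a power of $\varpi$, so that $U^\#$ really is the rational subset $U\left(\tfrac{f_1^\#,\ldots,f_n^\#}{g^\#}\right)$ on $\spa(R,R^+)$.

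For (i), I would set
\[ B_0 := R^\circ\left[\left(\tfrac{f_i^\#}{g^\#}\right)^{1/p^\infty}: 1\le i\le n\right] \subset R\left[\tfrac{1}{g^\#}\right], \]
let $B$ be its $\varpi$-adic completion, and define the tilted analogue $B_0^\flat$, $B^\flat$ using $f_i/g$ in place of $f_i^\#/g^\#$. The ring $B^\flat$ is by construction a perfect $\mathbb{F}_p$-algebra after reduction modulo $\varpi^\flat$, and the identification of $\#$ with the inverse of Frobenius supplies a canonical isomorphism of $R^\circ/\varpi$-algebras $B^\flat/\varpi^\flat \xrightarrow{\sim} B/\varpi$, both sides being obtained from $R^\circ/\varpi$ by adjoining $p^\infty$-th roots of the images of the $f_i/g$. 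Since Frobenius is a bijection on $B^\flat$, transporting across this isomorphism yields an isomorphism $\Phi: B/\varpi^{1/p} \xrightarrow{\sim} B/\varpi$. For almost flatness of $B^a$ over $K^{\circ a}_\Delta$, I would use Lemma~\ref{lemma5.1}(ii): $B_0$ sits inside $R[1/g^\#]$ and is therefore $\varpi$-torsion free, so passing to the $\varpi$-adic completion $B$ creates at worst almost $\varpi$-torsion. Combined with $\varpi$-adic completeness, this shows that $B^a$ is a perfectoid $K^{\circ a}_\Delta$-algebra.

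For (ii), I would identify $\mathcal{O}_X(U^\#)$ with $B[\tfrac{1}{\varpi}]$ via the universal property of rational localization. By definition $\mathcal{O}_X(U^\#)$ is the completion of $R[1/g^\#]$ whose ring of integral elements contains the integral closure of $R^\circ\bigl[\tfrac{f_i^\#}{g^\#}\bigr]$. Because $R$ is perfectoid, the Frobenius on $R^\circ/\varpi$ is surjective, so every $p$-th root of $f_i^\#/g^\#$ modulo $\varpi$ admits a lift in the $\varpi$-adic completion of $R^\circ\bigl[\tfrac{f_i^\#}{g^\#}\bigr]$; iterating shows this completion already contains all $p^\infty$-th roots, and therefore agrees with $B$. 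Inverting $\varpi$ gives $\mathcal{O}_X(U^\#) = B[\tfrac{1}{\varpi}]$, which is perfectoid by~(i) together with Proposition~\ref{Propostion 6.3}, and the same proposition identifies $\mathcal{O}_X(U^\#)^{\circ a}$ with $B^a$. For (iii), Theorem~\ref{Theorem 6.2} applied to $\mathcal{O}_X(U^\#)$ identifies its tilt with $B^\flat[\tfrac{1}{\varpi^\flat}]$, and the same argument as in (ii), carried out on $\spa(R^\flat, R^{\flat+})$ with $f_i$ in place of $f_i^\#$, identifies this with $\mathcal{O}_{X^\flat}(U)$.

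The main obstacle will be the identification in (ii) of $\mathcal{O}_X(U^\#)$ with $B[\tfrac{1}{\varpi}]$. Because $K_\Delta$ is not strongly Noetherian, the structure presheaf on $\spa(R,R^+)$ is delicate and one cannot quote the classical Huber construction directly; one must verify by hand that adjoining the adic limits of the $(f_i^\#/g^\#)^{1/p^k}$ to the naive rational localization does not enlarge the completion. The crux is the Frobenius surjectivity of $R^\circ/\varpi$, which feeds back into the construction of rational subset rings and makes the two completions coincide. Without this coincidence, the link between the abstract object $B$ and the geometric object $\mathcal{O}_X(U^\#)$ would be missing, and parts (ii) and (iii) would lose their foothold.
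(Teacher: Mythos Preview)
Your overall plan tracks the paper's, but the argument you give for (ii) has a real gap. You claim that the $\varpi$-adic completion of $R^\circ\bigl[\tfrac{f_i^\#}{g^\#}\bigr]$ already contains all $p^\infty$-th roots of the $f_i^\#/g^\#$, invoking Frobenius surjectivity on $R^\circ/\varpi$. But that surjectivity says nothing about $R^\circ\bigl[\tfrac{f_i^\#}{g^\#}\bigr]/\varpi$: the element $f_i^\#/g^\#$ does not lie in $R^\circ$, and a $p$-th root modulo $\varpi$ in $R^\circ$ has no reason to lift to a genuine $p$-th root inside the smaller ring or its completion. In fact $R^\circ\bigl\langle\tfrac{f_i^\#}{g^\#}\bigr\rangle$ and $B$ are not equal integrally; they differ by a bounded power of $\varpi$. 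The paper (following Scholze) instead proves the sandwich
\[
R^\circ\Bigl\langle\tfrac{f_i^\#}{g^\#}\Bigr\rangle \;\subset\; B \;\subset\; R\Bigl\langle\tfrac{f_i^\#}{g^\#}\Bigr\rangle = \mathcal{O}_X(U^\#),
\]
the second inclusion coming from the claim $\varpi^{nN} B_0 \subset R^\circ\bigl[\tfrac{f_i^\#}{g^\#}\bigr]$. This is exactly where the hypothesis $f_n^\# = \varpi^N$ is used: writing $1/g^\# = \varpi^{-N}(f_n^\#/g^\#)$, any monomial in the fractional powers $\bigl(\tfrac{f_i^\#}{g^\#}\bigr)^{1/p^m}$ can be rewritten with denominator $(g^\#)^n$, hence lies in $\varpi^{-nN}R^\circ\bigl[\tfrac{f_i^\#}{g^\#}\bigr]$. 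Inverting $\varpi$ in the sandwich then gives $\mathcal{O}_X(U^\#) = B[1/\varpi]$ without ever asserting that the two integral completions coincide.

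There is also a structural difference in how (i) and (iii) are organized. Rather than directly asserting $B/\varpi \cong B^\flat/\varpi^\flat$ and transporting the Frobenius, the paper introduces the presentation $R^\circ\langle T_i^{1/p^\infty}\rangle/J$ with $J$ generated by $T_i^{1/p^m}(g^{1/p^m})^\# - (f_i^{1/p^m})^\#$, first settles the characteristic-$p$ case via this presentation, and then takes $(S,S^+)$ to be the \emph{untilt} of the already-known perfectoid pair $(\mathcal{O}_{X^\flat}(U),\mathcal{O}^+_{X^\flat}(U))$. The universal property of $U^\#$ produces a map $\mathcal{O}_X(U^\#)\to S$, and one checks it is an almost isomorphism modulo $\varpi$ by matching the two presentations. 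Your direct route is morally the same, but the identifications you rely on (that $B_0/\varpi B_0$ agrees with its image in $R[1/g^\#]/\varpi$, and that $B/\varpi^{1/p}\cong B^\flat/(\varpi^\flat)^{1/p}$) are precisely what the presentation argument is designed to establish; as written they are asserted rather than proved.
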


\begin{proof}
    The same proof of Lemma 6.4 in \cite{Sch} can be adapted to this setting.
\end{proof}

We will now state the approximation lemma given in \cite{Sch}, Lemma 6.5, whose proof is just a minor modification.

\begin{lemma}[Approximation Lemma]\label{Approximation Lemma}
    Let $R=K_\Delta\left<T^{1/p^\infty}_1,\cdots,T^{1/p^\infty}_n\right>$. Let $f\in R^\circ$ be a homogeneous element of degree $d\in\mathbb{Z}\left[\frac{1}{p}\right]$. Then for any rational number $c\geq 0$ and any $\epsilon>0$, there exists an element
   \begin{align*}
       g_{c,\epsilon}\in R^{\flat\circ}=K^{\flat\circ}_\Delta\left<T^{1/p^\infty}_1,\cdots,T^{1/p^\infty}_n\right>
   \end{align*}
   homogeneous of degree $d$ such that for all $x\in X=\spa(R,R^\circ)$, we have 
   \begin{align*}
    |f(x)-g^\#_{c,\epsilon}(x)|\leq |\varpi|^{1-\epsilon}\max(|f(x)|,|\varpi_\alpha|^c).
   \end{align*}
\end{lemma}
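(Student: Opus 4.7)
The plan is to adapt Scholze's proof of the approximation lemma (\cite{Sch}, Lemma 6.5) to the multivariate perfectoid setting. The key tools are already in place: the tilting equivalence $R^{\circ}/\varpi \cong R^{\flat\circ}/\varpi^\flat$ supplied by Theorem \ref{Theorem 6.2} and Lemma \ref{Almost Rational Subset}, the multiplicative sharp map $\#\colon R^{\flat\circ}\to R^{\circ}$, and the pointwise norms $|\bullet(x)|$ coming from the embedding family. The argument splits into an inductive construction of a good approximation to $f$, followed by a pointwise case analysis for the required inequality.

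First, I reduce to a graded setting. Both $R$ and $R^{\flat}$ carry a natural $\mathbb{Z}[1/p]_{\geq 0}$-grading indexed by the total degree of monomials $T^\nu$, and the sharp map preserves this grading since each $T_i^{1/p^m}$ lifts canonically to $R^{\flat}$ via the compatible system of its $p$-th power roots. Since $f$ is homogeneous of degree $d$, the entire approximation may take place in the Banach submodules $R_d$ and $R^{\flat}_d$. Then I build $g$ as the limit of a sequence $(g_k)_{k\geq 0}\subset R^{\flat\circ}_d$ defined inductively: start with $g_0 \in R^{\flat\circ}_d$ satisfying $g_0^\# \equiv f \pmod{\varpi}$, which exists by the tilting equivalence; given $g_k$ with $g_k^\# - f \in \varpi^k R^{\circ}$, write $f - g_k^\# = \varpi^k h_k$ with $h_k \in R^{\circ}_d$, choose a lift $\tilde{h}_k \in R^{\flat\circ}_d$ of $h_k \bmod \varpi$, and set $g_{k+1} = g_k + (\varpi^\flat)^k \tilde{h}_k$. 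The crucial inductive claim is that $g_{k+1}^\# - f \in \varpi^{k+1}R^{\circ}$. This is established by expanding the defining limit
\[
(g_k + (\varpi^\flat)^k \tilde{h}_k)^\# = \lim_{m\to\infty}\bigl(g_k^{(m)} + (\varpi^\flat)^{k(m)}\tilde{h}_k^{(m)}\bigr)^{p^m}
\]
binomially: the pure terms yield $g_k^\# + \varpi^k \tilde{h}_k^\#$, and the cross terms carry a factor $\binom{p^m}{i}$ for $1 \leq i \leq p^m-1$, which is divisible by $p$ and hence by $\varpi$, combined with the factor $(\varpi^\flat)^{ki}$ which contributes $\varpi^k$. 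Thus each cross term lies in $\varpi^{k+1} R^{\circ}$, and since the norm is non-Archimedean the sum of the cross terms is controlled by the same bound in the limit.

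Having obtained $g = g_{M} \in R^{\flat\circ}_d$ with $|f - g^\#|_{\sup} \leq |\varpi|^M$ for any chosen $M$, I verify the pointwise bound. Fix $M = \lceil c+1\rceil$ (which suffices for any $\epsilon > 0$). For $x \in X$, split into two cases. If $|f(x)| \leq |\varpi|^c$, then $|f(x) - g^\#(x)| \leq |\varpi|^M \leq |\varpi|^{1-\epsilon}|\varpi|^c$. If $|f(x)| > |\varpi|^c$, then $|f(x) - g^\#(x)| \leq |\varpi|^M \leq |\varpi|^{1-\epsilon}|\varpi|^c < |\varpi|^{1-\epsilon}|f(x)|$. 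In either case, the right-hand side is at most $|\varpi|^{1-\epsilon}\max(|f(x)|, |\varpi|^c)$.

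The main obstacle is the inductive step controlling the cross terms in $(g_k + (\varpi^\flat)^k \tilde{h}_k)^\#$. This relies on two facts: the uniform bound $|\binom{p^m}{i}| \leq |p|$ for $1 \leq i \leq p^m - 1$, and the assumption $|p| \leq |\varpi_\alpha| = |\varpi|$ built into the definition of the pseudo-uniformizing ideal. A secondary subtlety is ensuring that each $\tilde{h}_k$ can be chosen homogeneous of degree $d$, which follows because the tilting isomorphism $R^{\circ}/\varpi \cong R^{\flat\circ}/\varpi^\flat$ respects the grading. Once the inductive step is secured, the remainder of the argument is a direct norm computation using the non-Archimedean triangle inequality and the multiplicativity of $\#$.
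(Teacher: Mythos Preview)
Your inductive step contains a genuine error. You claim that in the expansion
\[
(g_k + (\varpi^\flat)^k \tilde h_k)^\# = \lim_{m\to\infty}\bigl(g_k^{(m)} + ((\varpi^\flat)^k \tilde h_k)^{(m)}\bigr)^{p^m}
\]
the cross terms lie in $\varpi^{k+1}R^\circ$ because they carry ``the factor $(\varpi^\flat)^{ki}$ which contributes $\varpi^k$''. This is not correct: at level $m$, the element $((\varpi^\flat)^k)^{(m)}$ equals $\varpi^{k/p^m}$, so the $i$-th cross term contains only $\varpi^{ki/p^m}$, not $\varpi^k$. Taking $i=p^{m-1}$, one has $v_p\binom{p^m}{p^{m-1}}=1$ and $ki/p^m=k/p$, so this term is bounded only by $|p|\cdot|\varpi|^{k/p}\le|\varpi|^{1+k/p}$. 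Since $1+k/p<k+1$ for every $k\ge 1$, the cross terms do \emph{not} lie in $\varpi^{k+1}R^\circ$, and the induction breaks down already at the step $k=1\to k=2$ when $|p|=|\varpi|$.

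More conceptually, your argument, if it worked, would show that $\#\colon R^{\flat\circ}\to R^\circ$ has dense image in the sup norm, and hence that one can take a single $g$ with $|f-g^\#|_{\sup}\le|\varpi|^M$ for arbitrary $M$, independent of $c$. This is strictly stronger than what the lemma asserts; the point of the factor $\max(|f(x)|,|\varpi|^c)$ is precisely that such uniform approximation is not available. The paper (following \cite{Sch}) proceeds instead by induction on $r$ with $c=ar$: at each stage one passes to the rational subset $U_r=\{|f(x)|\le|\varpi|^{ar}\}$, uses Lemma~\ref{Almost Rational Subset} to identify $\mathcal O_X(U_r)^{\circ a}$ with $R^\circ\langle (g_r^\#/\varpi^{ar})^{1/p^\infty}\rangle^a$, and builds the correction term as a convergent sum $\sum_i (\varpi^\flat)^{ar+1-\epsilon+\epsilon_r}(g_r/(\varpi^\flat)^{ar})^i s_i$ inside that localized ring. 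The passage through $U_r$ is what makes the error controllable near the small-$|f|$ locus, and is the missing ingredient in your approach.
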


\begin{proof}
   The same proof as in Lemma 6.5 of \cite{Sch} works in this case.
\end{proof}

Using the approximation lemma, we get the following corollary as in Corollary 6.7 \cite{Sch}.

\begin{corollary}
    Let $(R,R^+)$ be a perfectoid affinoid $K_\Delta-$algebra, with tilt $(R^\flat,R^{\flat+})$ and let $X=\spa(R,R^+), X^\flat=\spa(R^\flat,R^{\flat+})$.
    \begin{enumerate}
        \item[(i).] For any $f\in R$ and any $c\geq 0,\epsilon>0$, there exists $g_{c,\epsilon}\in R^{\flat}$ such that for all $x\in X$, we have
        \begin{align*}
            |f(x)-g^\#_{c,\epsilon}(x)|\leq |\varpi|^{1-\epsilon}\max(|f(x)|,|\varpi_\alpha|^c).
        \end{align*}
        \item[(ii).] For any $x\in X$, the completed residue field $\widehat{k(x)}$ is a perfectoid field.
        \item[(iii).] The morphism $X\to X^\flat$ induces a homeomorphism, identifying rational subsets.
    \end{enumerate}
\end{corollary}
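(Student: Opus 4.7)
The plan is to prove the three parts in order, bootstrapping each from the previous together with the Approximation Lemma \ref{Approximation Lemma}, which handles the free Tate algebra case.

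For (i), I first reduce to the case $f \in R^\circ$ by replacing $f$ with $\varpi^N f$ for a sufficiently large $N$; both sides of the target inequality scale by the same factor $|\varpi|^N$. Next, I transfer the approximation from the free perfectoid Tate algebra $T := K_\Delta\langle T_1^{1/p^\infty}\rangle$ to $R$ as follows. Since $R$ is perfectoid, the Frobenius on $R^\circ/\varpi$ is surjective, so I can inductively choose $f_m \in R^\circ$ with $f_0 \equiv f \pmod{\varpi}$ and $f_{m+1}^{\,p} \equiv f_m \pmod{\varpi}$. The compatible sequence $(f_m)$ defines an element $\tilde f \in R^{\flat\circ}=\varprojlim_{x\mapsto x^p} R^\circ/\varpi$ with $\tilde f^{\#} \equiv f \pmod{\varpi}$. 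Applying Lemma \ref{Approximation Lemma} in $T^\flat$ to the coordinate $T_1$ and then pushing forward along the $K^\flat_\Delta$-algebra map $T^\flat \to R^\flat$ sending $T_1 \mapsto \tilde f$, the image $g_{c,\epsilon}$ satisfies the required estimate after absorbing the $O(\varpi)$ difference between $\tilde f^{\#}$ and $f$ (which is negligible compared to $|\varpi|^{1-\epsilon}\max(|f(x)|,|\varpi|^c)$ since $1-\epsilon < 1$).

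For (ii), fix $x \in X$. The completed residue field $\widehat{k(x)}$ is a complete non-archimedean field and the image of $\varpi$ is a pseudo-uniformizer. To show it is perfectoid I verify surjectivity of Frobenius on $\widehat{k(x)}^\circ/\varpi$. The composition $R^\circ \to \widehat{k(x)}^\circ \to \widehat{k(x)}^\circ/\varpi$ has dense image, and Frobenius is surjective on $R^\circ/\varpi$, so every element of $\widehat{k(x)}^\circ/\varpi$ is a $p$-th power modulo a topologically nilpotent error; a standard successive-approximation lift using the $\varpi$-adic completeness of $\widehat{k(x)}^\circ$ produces exact $p$-th roots.

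For (iii), I show that the continuous map $\phi:X \to X^\flat$ is a bijection and that the preimages and images of rational subsets are rational on the other side; together with continuity this yields a homeomorphism identifying rational subsets. Injectivity follows from (i): if $\phi(x_1)=\phi(x_2)$ then $|f^{\#}(x_1)|=|f^{\#}(x_2)|$ for every $f \in R^{\flat\circ}$, and approximating arbitrary elements of $R^\circ$ by sharps, continuity forces the two valuations to agree on $R$. Surjectivity: given $y \in X^\flat$, the valuation factors through the perfectoid field $\widehat{k(y)}$, whose untilt under the tilting equivalence (Theorem \ref{Theorem 6.2}) provides a characteristic zero perfectoid field receiving a map from $R$ and carrying a continuous valuation, giving the required preimage $x$. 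For the bijection on rational subsets, one direction is Lemma \ref{Almost Rational Subset}: a rational subset of $X^\flat$ pulls back to a rational subset of $X$. Conversely, given $U\bigl(\tfrac{f_1,\ldots,f_n}{g}\bigr) \subset X$, Remark \ref{rationalsubsetmodification} lets me assume $f_i, g \in R^\circ$ and that $\varpi^N$ appears among the $f_i$; approximating each $f_i$ and $g$ sharply enough by $\tilde f_i^{\#}, \tilde g^{\#}$ via part (i) (with $\epsilon$ chosen small relative to the data), the comparison $|f_i(x)|\leq|g(x)|$ is preserved on the nose, so the two rational subsets coincide.

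The main obstacle is (iii), specifically the rational subset correspondence: one must coordinate the parameters $c, \epsilon$ in part (i) with the finitely many generators $f_1,\ldots,f_n,g$ so that the approximation error is swallowed by $|\varpi|^c$ exactly where it matters, namely on the boundary between the rational subset and its complement. The addition of $\varpi^N$ among the numerators is exactly what allows the $\max$ in the bound of (i) to be replaced by $|g(x)|$ on the rational subset, making the argument go through.
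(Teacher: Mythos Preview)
Your argument for (i) has a real gap. The claim that the $O(\varpi)$ difference between $\tilde f^{\#}$ and $f$ is ``negligible compared to $|\varpi|^{1-\epsilon}\max(|f(x)|,|\varpi|^c)$ since $1-\epsilon<1$'' is false once $c>\epsilon$: at a point $x$ where $|f(x)|$ is very small the right-hand side is only about $|\varpi|^{1-\epsilon+c}$, which is much smaller than $|\varpi|$. In other words, a single lift $\tilde f\in R^{\flat\circ}$ with $\tilde f^{\#}\equiv f\pmod\varpi$ solves the problem only for $c\leq\epsilon$; for larger $c$ it gives nothing. Your use of the one-variable Tate algebra is vacuous here, since applying the Approximation Lemma to the coordinate $T_1$ just returns $T_1$, and after pushforward you are back to the naive approximation $\tilde f$.

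The paper's fix is exactly to beat this depth-$c$ obstacle: one writes a $\varpi$-adic expansion $f=g_0^{\#}+b_1g_1^{\#}+\cdots+b_cg_c^{\#}+b_{c+1}f_{c+1}$ with $\varpi\mid b_i$ and $g_i\in R^{\flat\circ}$, so the tail $b_{c+1}f_{c+1}$ has size $\leq|\varpi|^{c+1}$ and \emph{can} be discarded against the target bound. Then one maps the $(c{+}1)$-variable free perfectoid Tate algebra $K_\Delta\langle T_0^{1/p^\infty},\ldots,T_c^{1/p^\infty}\rangle\to R$ by $T_i^{1/p^m}\mapsto (g_i^{1/p^m})^{\#}$, so that $f$ (up to the discarded tail) is the image of the polynomial $T_0+b_1T_1+\cdots+b_cT_c$, and only \emph{then} invokes the Approximation Lemma. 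The number of variables must grow with $c$; a single variable cannot work. Since your (iii) uses (i) with $c=N$ to match rational subsets, this gap propagates. Separately, your direct argument for (ii) leans on ``$R^\circ\to\widehat{k(x)}^\circ/\varpi$ has dense image'', which is not obvious: elements of $R$ approximating a given $a\in\widehat{k(x)}^\circ$ need not lie in $R^\circ$. The paper avoids this by first treating the characteristic $p$ case via the perfectoid structure on rational localizations and then deducing the mixed-characteristic case from (iii).
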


\begin{proof}
    (i). Similarly as in Corollary 6.7 of \cite{Sch}, we can take $R^+=R^\circ$. We can write 
    \begin{align*}
        f=g^\#_0+b_1g^\#_1+\cdots+b_cg^\#_c+b_{c+1}f_{c+1}
    \end{align*}
    where $\varpi$ divides $b_i$, $(1\leq i\leq c+1)$ and $g_0,\cdots,g_c\in R^{\flat\circ}$ and $f_{c+1}\in R^\circ$. Note that we can assume $f=g^\#_0+\cdots+b_cg^\#_c$ by estimating the valuations. Consider the map
    \begin{align*}
        K_\Delta\left<T^{1/p^\infty}_0,\cdots,T^{1/p^\infty}_c\right>&\to R .\\
                                                        T_i^{1/p^m}  &\mapsto (g^{1/p^m}_i)^\#
    \end{align*}
    Then $f$ is the image of $T_0+b_1T_1+\cdots+b_cT^c$ on which applying the Approximation lemma gives the required element.

    (ii). ($K_\Delta$ has characteristic $p$) We know $\mathcal{O}_X(U)^{\circ a}$ is perfectoid for any rational subset $U$. Thus the $\varpi-$adic completion of $\mathcal{O}^{\circ a}_{X,x}$ is perfectoid $K^{\circ a}_\Delta-$algebra, hence $\widehat{k(x)}$ is a perfectoid $K_\Delta$-algebra. The structure map $K_\Delta\to \widehat{k(x)}$ has kernel as some prime ideal $\mathfrak{P}_s$ for some $s\in\mathfrak{S}$. Then $\widehat{k(x)}$ is a perfectoid field over $K_s$.

    (iii). We have already seen that $\phi:\spa(R,R^+)\to \spa(R^\flat,R^{\flat +})$ is continuous by showing the inverse image of a rational subset is rational. Now suppose $U\left(\frac{f_1,\cdots,f_n}{g}\right)$ with $f_n=\varpi^N$ be a rational subset. Note that 
    \begin{align*}
        U\left(\frac{f_1,\cdots,f_N}{g}\right)=\bigcap\limits_{i=1}^{n-1}U\left(\frac{f_i,\varpi^N}{g}\right)
    \end{align*}
    and hence we can assume $U=U\left(\frac{f,\varpi^N}{g}\right)$. By approximation lemma \ref{Approximation Lemma} for $f=f_i,c=N$ and any $\epsilon\in (0,1)$ and respectively for $f=g,c=N$, and $\epsilon=1$, there exists $a,b\in R^\flat$ such that
    \begin{align*}
        |g(x)-b^\#(x)|&<\max(|g(x)|,|\varpi|^N) ,\\
        \max(|f(x)|,|\varpi|^N)&=\max(|a^\#(x)|,|\varpi|^N).
    \end{align*}
    Then it is easy to see that $U\left(\frac{f,\varpi^N}{g}\right)=U\left(\frac{a^\#,\varpi^N}{b^\#}\right)$. This proves that every rational subset of $X$ is the inverse image of rational subsets of $X^\flat$. Because $X$ is $T_0$, the map $\phi$ is injective. We have $x\in X^\flat$ factors as composite $R^\flat\to \hat{k(x)}\to \Gamma\cup \{0\}$ with $\hat{k(x)}$ a perfectoid field. This untilt to a perfectoid field $\widehat{k(x^\flat)}$ gives a valuation on $R^\flat$.

    (ii) The above proof also gives the proof of (ii) for general $K_\Delta$.
\end{proof}

\begin{corollary}
    Let $(R,R^+)$ be a perfectoid affinoid $K_\Delta$-algebra, with tilt $(R^\flat,R^{\flat +})$, and let $X=\spa(R,R^+), X^\flat=\spa(R^\flat,R^{\flat +})$. Then for all rational $U\subset X$, the pair $(\mathcal{O}_X(U),\mathcal{O}^+_X(U))$ is a perfectoid affinoid $K_\Delta$-algebra with tilt $(\mathcal{O}_{X^\flat}(U^\flat),\mathcal{O}^+_{X^\flat}(U^\flat))$.
\end{corollary}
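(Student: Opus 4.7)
The plan is to deduce the statement from the earlier preparatory results: the lemma on almost rational subsets (Lemma \ref{Almost Rational Subset}) already provides the perfectoid structure and identifies the tilt for a specific shape of rational subset, and the previous corollary identifies rational subsets of $X$ and $X^\flat$ bijectively. So the strategy is to rewrite the given rational subset in a form to which Lemma \ref{Almost Rational Subset} applies, and then read off both conclusions.

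First, by the previous corollary, the homeomorphism $\phi\colon X\to X^\flat$ matches rational subsets, so I can write $U=\phi^{-1}(U^\flat)$ for a (uniquely determined) rational subset $U^\flat\subset X^\flat$. Next, by Remark \ref{rationalsubsetmodification} (applied on the tilted side), I can enlarge the list of numerator generators of $U^\flat$ to include an extra generator $f_{n+1}=\varpi^{\flat N}$ for some large $N$ without changing $U^\flat$, and after clearing denominators I may also assume all $f_i$ and $g$ lie in $R^{\flat\circ}$. Finally, a standard reduction lets me assume $R^+=R^\circ$: the only issue in the statement is the $+$-subring, and open integrally closed subrings of $\mathcal{O}_X(U)^\circ$ containing the image of $R^+$ correspond bijectively (via reduction modulo $\mathfrak{h}$ and the tilting isomorphism $\mathcal{O}_X(U)^\circ/\mathfrak{h}\cong\mathcal{O}_{X^\flat}(U^\flat)^{\circ}/\mathfrak{h}^\flat$) with open integrally closed subrings of $\mathcal{O}_{X^\flat}(U^\flat)^\circ$ containing the image of $R^{\flat+}$, as in the proof of the Tilting Equivalence at the beginning of this section.

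Now Lemma \ref{Almost Rational Subset} applies directly: part (ii) says $\mathcal{O}_X(U)$ is a perfectoid $K_\Delta$-algebra with explicit description of $\mathcal{O}_X(U)^{\circ a}$ as the almostification of a $\varpi$-adically completed subring of $R\bigl[\tfrac{1}{g^\#}\bigr]$, and part (iii) says the tilt of $\mathcal{O}_X(U)$ is $\mathcal{O}_{X^\flat}(U^\flat)$ as perfectoid $K^\flat_\Delta$-algebras. Combined with the Tilting Equivalence lemma for affinoid perfectoid $K_\Delta$-algebras (which says the functor $(R,R^+)\mapsto (R^\flat,R^{\flat+})$ is an equivalence and that $R^{\flat+}/I^\flat_\varpi\cong R^+/I_\varpi$), this promotes the algebra-level tilting to the affinoid pair: the integrally closed open subring $\mathcal{O}_X^+(U)$ corresponds under tilting to $\mathcal{O}_{X^\flat}^+(U^\flat)$, so $(\mathcal{O}_X(U),\mathcal{O}_X^+(U))$ is a perfectoid affinoid $K_\Delta$-algebra with tilt $(\mathcal{O}_{X^\flat}(U^\flat),\mathcal{O}_{X^\flat}^+(U^\flat))$.

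The main obstacle I anticipate is nothing deep but purely bookkeeping: one must check that the description of $\mathcal{O}_X^+(U)$ that comes out of Huber's construction from $(R,R^+)$ (namely the integral closure in $\mathcal{O}_X(U)$ of the subring generated by $R^+$ and $f^\#_i/g^\#$) is precisely the open integrally closed subring that tilts to the analogously defined $\mathcal{O}_{X^\flat}^+(U^\flat)$. This is handled by the bijection between open integrally closed subrings on the two sides, which is the only nontrivial input apart from Lemma \ref{Almost Rational Subset} itself.
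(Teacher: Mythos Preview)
Your proposal is correct and follows essentially the same approach as the paper, which simply defers to Corollary~6.8 of \cite{Sch}; your sketch spells out precisely that argument (reduce to a rational subset of the form covered by Lemma~\ref{Almost Rational Subset} via the previous corollary and Remark~\ref{rationalsubsetmodification}, then handle the $+$-rings via the bijection of open integrally closed subrings from the Tilting Equivalence lemma).
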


\begin{proof}
    The proof is exactly analogous to Corollary 6.8 of \cite{Sch}.
\end{proof}

 We now state our main theorem as in \cite{Sch}.

\begin{theorem}
    Let $(R,R^+)$ be a perfectoid $K_\Delta-$algebra, and let $X=\spa(R,R^+)$. Then the associated presheaves $\mathcal{O}_X,\mathcal{O}^+_X$ are sheaves. \footnote{It is true from \cite{Buzz} that any stably uniform $(R,R^+)$ is sheafy, which also suffices for the result in this case.} 
\end{theorem}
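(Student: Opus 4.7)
The plan is to adapt Scholze's proof (\cite{Sch}, Theorem~6.3) to our multivariate setting, exploiting the tilting equivalence already established. First, by the preceding corollary, every rational localization $\mathcal{O}_X(U)$ is itself perfectoid and, in particular, uniform; hence $(R,R^+)$ is stably uniform, and one could simply invoke \cite{Buzz} as the footnote suggests. For an intrinsic proof, the goal is instead to prove directly that the \v{C}ech complex attached to any rational covering is exact (in fact acyclic in higher degrees) for $\mathcal{O}_X^+$, and then invert $\varpi$.

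Using the tilting equivalence for perfectoid affinoid $K_\Delta$-algebras together with the homeomorphism $X\cong X^\flat$ identifying rational subsets and the identification $\mathcal{O}_X^+(U)/\varpi\cong \mathcal{O}_{X^\flat}^+(U^\flat)/\varpi^\flat$, it suffices to verify \v{C}ech acyclicity modulo $\varpi^\flat$ in the characteristic-$p$ case. Then $\varpi^\flat$-adic completeness (Lemma~\ref{lemma5.1}(iv)) lifts acyclicity to the integral almost level, and inverting $\varpi^\flat$ restores the statement for $\mathcal{O}_{X^\flat}$, which transfers back through tilting. A standard reduction for rational coverings (\cite{Hubet}) further reduces the problem to a basic Laurent covering $X=U(f/1)\cup U(1/f)$ with $f\in R^{\flat\circ}$, in which case the relevant sections are the almost $\varpi^\flat$-adic completions of $R^{\flat\circ}[f^{1/p^\infty}]$, $R^{\flat\circ}[f^{-1/p^\infty}]$, and $R^{\flat\circ}[f^{\pm 1/p^\infty}]$ by Lemma~\ref{Almost Rational Subset}.

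Modulo $\varpi^\flat$, the \v{C}ech complex of this Laurent covering becomes a complex of perfect $K_\Delta^{\flat\circ a}/\varpi^\flat$-modules, and exactness is proved by splitting each element in the middle into its parts of nonnegative and negative $\mathbb{Z}[1/p]$-degree in $f$, exactly as in Scholze's calculation. The main obstacle is that $K_\Delta^{\flat\circ}$ is not a valuation ring but has infinitely many maximal ideals $\mathfrak{m}_s^\flat$ indexed by $\mathfrak{K}$. I propose to handle this by checking the almost vanishing of the \v{C}ech cohomology after localizing at each $\mathfrak{m}_s^\flat$, where the local ring is the perfectoid valuation ring $K_s^{\flat\circ}$ (by the lemma computing $(K^\circ_\Delta)_{\mathfrak{m}_s}$) and the classical argument of \cite{Sch} applies verbatim. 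Since flatness of the \v{C}ech terms over $K_\Delta^{\flat\circ a}$ (guaranteed by Lemma~\ref{lemma5.1}(i) applied to the torsion-free algebras in question) implies that cohomology commutes with these localizations, the almost vanishing descends to the global level.

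Finally, combining this exactness with the $\varpi^\flat$-adic completeness of each $\mathcal{O}_{X^\flat}^+(U)^a$ and exactness of inverse limits along the Mittag-Leffler system $\{\mathcal{O}_{X^\flat}^+(U)/\varpi^{\flat n}\}_n$ yields acyclicity of the \v{C}ech complex for $\mathcal{O}_{X^\flat}^{+a}$; inverting $\varpi^\flat$ and untilting (both exact operations by the tilting equivalence for perfectoid algebras) provides the sheaf property for both $\mathcal{O}_X$ and $\mathcal{O}_X^+$ as desired.
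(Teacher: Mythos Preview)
Your proposal is essentially correct but takes a considerably longer route than the paper. The paper's proof is a two-line fiberwise argument at the \emph{generic} level: for any finite rational cover it forms the \v{C}ech complex of $\mathcal{O}_X$, tensors (over $K_\Delta$) with each residue field $K_s$, $s\in\mathfrak{S}$, and observes that the resulting complex is the \v{C}ech complex of a perfectoid affinoid over the honest perfectoid field $K_s$, hence exact by classical \cite{Sch}. Since $K_\Delta$ is VNR, tensoring with $K_s=(K_\Delta)_{\mathfrak{P}_s}$ is exact and commutes with taking cohomology; vanishing of all localizations (equivalently, $\bigcap_{s}\ker\|\cdot\|_s=0$) forces the cohomology itself to vanish.

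You instead reproduce Scholze's full integral argument: tilt, reduce modulo $\varpi^\flat$, pass to a simple Laurent cover, and only then localize---but at the \emph{integral} primes $\mathfrak{m}_s^\flat$ of $K^{\flat\circ}_\Delta$ rather than at the generic primes $\mathfrak{P}_s$ of $K_\Delta$. This is valid (your observation that almost-zero is a local property over $K^\circ_\Delta$ is exactly what makes the descent step work), and it has the advantage of simultaneously yielding the stronger almost-acyclicity of $\mathcal{O}_X^{+a}$, which the paper only obtains later (Theorem~\ref{IntegralTateAcyclicity}) and only under an additional $p$-finiteness hypothesis on the tilt. The cost is that you must justify more compatibilities (that rational localizations commute with $-\otimes_{K^\circ_\Delta}K^\circ_s$, that the Mittag--Leffler lift is legitimate, etc.), whereas the paper's argument avoids all of this by staying on the Banach side where VNR flatness is automatic.
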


\begin{proof}
     It suffices to show that for an arbitrary finite open cover $\{U_i\}_{i\in I}$ of $X=\mathrm{Spa}(R,R^+)$, the \v{C}ech complex.
\begin{align}\label{Cech eq}
0\to\mathcal{O}_X(X)\to \prod\limits_{i\in I}\mathcal{O}_X(U_i)\to \prod\limits_{i,j\in I}\mathcal{O}_X(U_i\cap U_j)\to\cdots
\end{align}
is exact. Taking completed tensor product with $K_s$ for $s\in\mathfrak{S}$, we get the following \v{C}ech complex
\begin{align*}
0\to\mathcal{O}_X(X)\otimes_{K_\Delta}K_s\to \prod\limits_{i\in I}(\mathcal{O}_X(U_i)\otimes_{K_\Delta} K_s)\to \prod\limits_{i,j\in I}(\mathcal{O}_X(U_i\cap U_j)\otimes_{K_\Delta} K_s)\to\cdots
\end{align*}
is an exact sequence of perfectoid $K_s$ algebras which is exact by the sheaf property of usual perfectoid spaces over $K_s$. This is true for every $s\in\mathfrak{S}$. Thus, we have exactness of the following complex
\begin{align*}
  0\to\prod\limits_{s\in\mathfrak{S}}\mathcal{O}_X(X)\otimes_{K_\Delta}K_s\to \prod\limits_{s\in\mathfrak{S}}\prod\limits_{i\in I}(\mathcal{O}_X(U_i)\otimes_{K_\Delta} K_s)\to \prod\limits_{s\in\mathfrak{S}}\prod\limits_{i,j\in I}(\mathcal{O}_X(U_i\cap U_j)\otimes_{K_\Delta} K_s)\to\cdots
\end{align*}
is an exact sequence of $K_\Delta$-modules. This exact sequence can be rewritten as 
\begin{align*}\label{s Cech eq}
  0\to\prod\limits_{s\in\mathfrak{S}}\mathcal{O}_X(X)\otimes_{K_\Delta}K_s\to \prod\limits_{i\in I}\prod\limits_{s\in\mathfrak{S}}(\mathcal{O}_X(U_i)\otimes_{K_\Delta} K_s)\to \prod\limits_{i,j\in I}\prod\limits_{s\in\mathfrak{S}}(\mathcal{O}_X(U_i\cap U_j)\otimes_{K_\Delta} K_s)\to\cdots
\end{align*}
Since each of the terms of the sequence \ref{Cech eq} is a submodule of the corresponding term of the equation \ref{s Cech eq}, we get the exactness of \ref{Cech eq}.
\end{proof}

\begin{definition}
    Let $K_\Delta$ be of characteristics $p$. A perfectoid affinoid $K_\Delta$-algebra $(R,R^+)$ is called \emph{$p$-finite} if there exists a reduced affinoid $K_\Delta$-algebra $(S,S^+)$ of topologically finite type such that $R^+=\varinjlim\limits_{\Phi}S^+$ and $R=R^+[\varpi^{-1}]$.
\end{definition}

\begin{theorem}\label{IntegralTateAcyclicity}
    Let $(R,R^+)$ be a perfectoid $K_\Delta$-algebra and $(R^\flat,R^{\flat+})$ be its tilt. Assume $(R^\flat,R^{\flat+})$ is the completion of filtered direct limit of $p-$finite perfectoid affinoid $K^\flat_\Delta-$algebras and suppose $X=\spa(R,R^+)$. Then for any finite covering $X=\bigcup\limits_{i\in I}U_i$ by finitely many rational subsets, the sequence 
    \begin{align*}
        0\to \mathcal{O}_X(X)^{\circ a}\to \prod\limits_{i\in I}\mathcal{O}_X(U_i)^{\circ a}\to \prod\limits_{i,j\in I}\mathcal{O}_X(U_i\cap U_j)^{\circ a}\to\cdots
    \end{align*}
    is exact. In particular, $\cH^i(X,\mathcal{O}^{\circ a}_X)=0$.
\end{theorem}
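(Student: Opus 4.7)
The plan is to adapt Scholze's proof of Theorem 8.8 in \cite{Sch} to the multivariate setting. I would first use the tilting equivalence established in Theorem \ref{Theorem 6.2} together with the identification of rational subsets from the previous corollary to reduce to the case where $K_\Delta$ has characteristic $p$. The point is that $\mathcal{O}_X(U)^{\circ a}/\varpi$ is canonically identified with $\mathcal{O}_{X^\flat}(U^\flat)^{\circ a}/\varpi^\flat$, so exactness modulo $\varpi$ transfers across tilting; and exactness modulo $\varpi$ together with $\varpi$-adic completeness and flatness (Lemma \ref{lemma5.1}) implies exactness of the full integral \v{C}ech complex.

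Next I would reduce to the $p$-finite case. Write $(R, R^+)$ as the completion of $\varinjlim_i (R_i, R_i^+)$ with each $(R_i, R_i^+)$ being $p$-finite, as in the hypothesis. The approximation Lemma \ref{Approximation Lemma} together with Remark \ref{rationalsubsetmodification} shows that any finite rational cover $\{U_j\}$ of $X$ descends to a rational cover of some $X_{i_0} = \spa(R_{i_0}, R_{i_0}^+)$, and the integral \v{C}ech complex on $X$ is the $\varpi$-adic completion of the filtered colimit of those on $X_i$ for $i \geq i_0$. Since filtered colimits and $\varpi$-adic completion preserve almost exactness for flat modules (Lemma \ref{lemma5.1}), exactness reduces to the $p$-finite case.

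In the $p$-finite case $(R, R^+) = (S[\varpi^{-1}], \varinjlim_\Phi S^+)$ for some reduced topologically finite type $(S, S^+)$, the Frobenius colimit identifies the integral \v{C}ech complex on $X$ with the $\varpi$-adic completion of the colimit over Frobenius of the integral \v{C}ech complex on $Y = \spa(S, S^+)$, attached to the pulled-back cover. The classical Tate acyclicity theorem on $Y$, which holds under the hypothesis of this theorem that the relevant quotient carries a Banach algebra structure, gives exactness of the corresponding rational \v{C}ech complex for $\mathcal{O}_Y$; passing from rational to integral structures introduces at most a factor annihilated by some power of $\mathfrak{h}$, absorbed by the almost structure, and the colimit along the injective Frobenius preserves this almost exactness.

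The main obstacle is establishing Tate acyclicity on $Y = \spa(S, S^+)$, since $K_\Delta$ is not Noetherian and the classical tools (Weierstrass preparation, division algorithm, closedness of ideals) are not directly available, as discussed in the introduction. My approach would be to argue fiberwise: for each $s \in \mathfrak{S}$, the localization $S \otimes_{K_\Delta} K_s$ is a topologically finite type affinoid algebra over the perfectoid field $K_s$, hence classical Tate acyclicity applies to the rational \v{C}ech complex over $\spa(S \otimes_{K_\Delta} K_s, \,\cdot\,)$. The open mapping theorem (Theorem \ref{Open Mapping Theorem}) allows one to control the fibers uniformly, and the identity $\bigcap_{s \in \mathfrak{S}} \ker \|\bullet\|_s = \{0\}$ from the definition of a norm space reassembles these fiberwise statements into global exactness on $Y$. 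The concluding $\cH^i(X, \mathcal{O}^{\circ a}_X) = 0$ assertion then follows from a standard \v{C}ech-to-derived argument, using that rational subsets form a basis of $X$ closed under finite intersections.
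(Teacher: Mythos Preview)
Your outline follows the same route as the paper: Scholze's chain of reductions (tilt to characteristic $p$, pass to the $p$-finite case via the filtered colimit hypothesis, then to the topologically-finite-type seed via the Frobenius colimit), with the Open Mapping Theorem for $K_\Delta$-Banach spaces as the one genuinely new multivariate ingredient. So the strategy is correct.

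Where you diverge from the paper is in \emph{where} the Open Mapping Theorem enters. You invoke it in your final paragraph to ``control the fibers uniformly'' when establishing rational Tate acyclicity on $Y=\spa(S,S^+)$. But rational exactness does not need the Open Mapping Theorem at all: since $K_\Delta$ is von Neumann regular, every module is flat, so $H^i(C^\bullet)\otimes_{K_\Delta}K_s=H^i(C^\bullet\otimes_{K_\Delta}K_s)=0$ for each $s\in\mathfrak{S}$ by classical Tate acyclicity over the perfectoid field $K_s$, and then $H^i(C^\bullet)$ vanishes because it vanishes at every localization. This is exactly the fiberwise argument the paper already used in the preceding theorem. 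The decisive use of the Open Mapping Theorem is the step you leave unjustified in your third paragraph, namely ``passing from rational to integral structures introduces at most a factor annihilated by some power of $\mathfrak{h}$.'' In the paper, one applies Theorem~\ref{Open Mapping Theorem} to the exact rational \v{C}ech complex of $K_\Delta$-Banach spaces: each $d_{i-1}$ surjects onto the closed subspace $\ker d_i$, hence is open, so the subspace and quotient topologies on $\mathrm{im}(d_{i-1})$ coincide; the explicit $\varpi$-adic description of the quotient topology then forces the cohomology of the integral complex to be killed by a fixed power of $\varpi$. That is the mechanism you are missing.
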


\begin{proof}
    The same argument in Proposition 6.11, Lemma 6.13, and Proposition 6.14 of \cite{Sch} shows that it is enough to prove that if $(R,R^+)$ is topologically of finite type; the corresponding \v{C}ech cohomology group is annihilated by some power of $\varpi$. We know that the \v{C}ech complex
\[
\begin{tikzcd}
0\arrow[r] & \mathcal{O}_X(X)\arrow[r,"d_0"] & \prod\limits_{i\in I}\mathcal{O}_X(U_i)\arrow[r,"d_1"] & \prod\limits_{i,j\in I}\mathcal{O}_X(U_i\cap U_j)\arrow[r,"d_2"] &\cdots
\end{tikzcd}
\]
is exact. We also have that $\ker(d_i)$ is a closed subspace of a $K_\Delta$-Banach space, hence it is itself a $K_\Delta$-Banach space, and $d_{i-1}$ is a surjection onto $\ker(d_i)$. Thus, by the Open Mapping Theorem \ref{Open Mapping Theorem}, we get that $d_{i-1}$ is an open map to $\ker(d_i)$. Thus, the subspace and quotient topologies on $\ker(d_i)=\mathrm{im}(d_{i-1})$ coincide. Consider the sequence of power-bounded elements obtained from the above \v{C}ech complex:
\[
\begin{tikzcd}
0\arrow[r] &\mathcal{O}_X(X)^\circ \arrow[r,"d^\circ_0"] &\prod\limits_{i\in I}\mathcal{O}_X(U_i)^\circ \arrow[r,"d^\circ_1"] & \prod\limits_{i,j\in I}\mathcal{O}_X(U_i\cap U_j)^\circ \arrow[r,"d^\circ_2"] & \cdots
\end{tikzcd}
\]
If $R$ is topologically of finite type, then the quotient topology on $\mathrm{im}(d_{i-1})$ has $\{\varpi^{m_1}_1\mathrm{im}(d_{i-1})+\cdots+ \varpi^{m_t}_t\mathrm{im}(d_{i-1}) \mid (m_1,\cdots,m_t)\in\mathbb{N}^t\}$ as a basis of neighborhoods of $0$. This means that the cohomology is annihilated by some power of $\varpi$. 
\end{proof}

We can finally give the definition of Perfectoid spaces over $K_\Delta$ as in \cite{Sch}, Definition 6.15, Definition 6.16.

\begin{definition}
   \begin{enumerate}
       \item[(i).] A \emph{perfectoid space} over $K_\Delta$ is an adic space over $K_\Delta$ that is locally isomorphic to affinoid perfectoid spaces. Morphisms of perfectoid spaces are morphisms of adic spaces. 
       \item[(ii).] For a perfectoid space $X$ over $K_\Delta$, the \emph{tilt} of $X$ denoted by $X^\flat$ is a perfectoid space over $K^\flat_\Delta$ such that there is a functorial isomorphism $\Hom(\spa(R^\flat,R^{\flat +}),X^\flat)=\Hom(\spa(R,R^+),X)$ for all affinoid $K_\Delta-$algebras $(R,R^+)$ with tilt $(R^\flat,R^{\flat +})$.
   \end{enumerate}
\end{definition}

\begin{proposition}
   Any perfectoid space $X$ over $K_\Delta$ admits a unique tilt $X^\flat$, unique up to unique isomorphism. This induces an equivalence of categories between the category of perfectoid spaces over $K_\Delta$ and the category of perfectoid spaces over $K^\flat_\Delta$. The underlying topological spaces $X$ and $X^\flat$ are naturally identified. A perfectoid space $X$ is affinoid perfectoid if and only if its tilt $X^\flat$ is affinoid perfectoid. For any affinoid perfectoid subspace $U\subset X$, the pair $(\mathcal{O}_X(U),\mathcal{O}^+_X(U))$ is a perfectoid affinoid $K_\Delta$-algebra with tilt $(\mathcal{O}_{X^\flat}(U^\flat),\mathcal{O}^+_{X^\flat}(U^\flat))$. Fiber products exist in the category of perfectoid spaces.
\end{proposition}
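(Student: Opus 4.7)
The plan is to mimic Scholze's proof of Proposition 6.17 in \cite{Sch}, using the tilting equivalence for affinoid perfectoid $K_\Delta$-algebras and the fact that the sheaf property holds (both established above). First I would handle the affinoid case: for an affinoid perfectoid space $X=\spa(R,R^+)$ over $K_\Delta$, define its tilt to be $X^\flat:=\spa(R^\flat,R^{\flat+})$, where $(R^\flat,R^{\flat+})$ is the tilt of $(R,R^+)$ provided by the tilting equivalence lemma. The previous corollary already gives a homeomorphism of the underlying topological spaces that identifies rational subsets, and the corollary about rational localizations shows that for every rational $U\subset X$ the pair $(\mathcal{O}_X(U),\mathcal{O}^+_X(U))$ is perfectoid affinoid with tilt $(\mathcal{O}_{X^\flat}(U^\flat),\mathcal{O}^+_{X^\flat}(U^\flat))$. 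Together with the sheaf property just proved, this verifies the universal property $\Hom(\spa(R^\flat,R^{\flat+}),X^\flat)=\Hom(\spa(R,R^+),X)$ in the affinoid case.

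Next, for a general perfectoid space $X$ over $K_\Delta$, I would choose a cover $X=\bigcup_i U_i$ by affinoid perfectoids $U_i=\spa(R_i,R_i^+)$. Since the tilt is functorial on affinoid perfectoid $K_\Delta$-algebras and commutes with passage to rational subsets, the tilts $U_i^\flat=\spa(R_i^\flat,R_i^{\flat+})$ carry canonical isomorphisms on overlaps $(U_i\cap U_j)^\flat$, and these satisfy the cocycle condition by functoriality. The sheaf property on the tilted side (applied to any common refinement by rational subsets) then allows us to glue the $U_i^\flat$ into an adic space $X^\flat$ over $K^\flat_\Delta$, which is a perfectoid space and independent, up to unique isomorphism, of the chosen cover. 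The same gluing argument run in the opposite direction produces an untilting functor; the composites in both directions are naturally isomorphic to the identity, which gives the equivalence of categories. The identification of underlying topological spaces and the preservation of the affinoid perfectoid property are affinoid-local, hence inherited from the affinoid case.

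For fiber products, the key reduction is again to the affinoid case: given perfectoid affinoid $K_\Delta$-algebras $(R_1,R_1^+)$ and $(R_2,R_2^+)$ over a third $(R_0,R_0^+)$, one forms the pair $(R,R^+)$ where $R^+$ is the $\varpi$-adic completion of the image of $R_1^+\otimes_{R_0^+}R_2^+$ and $R=R^+[\varpi^{-1}]$; the tilted analogue gives $(R^\flat,R^{\flat+})$ by the same recipe. The main point to verify is that $R$ is again perfectoid, i.e.\ that Frobenius is surjective on $R^{\circ a}/\varpi$; this follows because on the tilted side $R^\flat$ is obtained by the analogous construction in characteristic $p$, which is manifestly perfect, and the tilting equivalence $K^{\circ a}_\Delta\text{-}\Perf\cong (K^{\circ a}_\Delta/\varpi)\text{-}\Perf\cong K^{\flat\circ a}_\Delta\text{-}\Perf$ then transports perfectness back to mixed characteristic. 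Gluing along affinoid perfectoid covers, together with the tilting equivalence just established, produces general fiber products in the category of perfectoid spaces over $K_\Delta$.

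The step I expect to be the main obstacle is checking that the completed tensor product construction producing fiber products actually yields a perfectoid $K_\Delta$-algebra. In the classical perfectoid setting this is essentially immediate from the fact that a completed tensor product of perfect $\mathbb{F}_p$-algebras is perfect; here one must additionally be careful because $K_\Delta$ is not a field and the tensor product might a priori acquire nilpotents, so one has to invoke the reducedness of perfectoid $K_\Delta$-algebras (proved above) and route the verification through the tilted, characteristic $p$ side, where perfectness is automatic and the tilting equivalence ensures the required structure lifts.
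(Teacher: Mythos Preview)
Your proposal is correct and follows essentially the same approach as the paper: the paper's proof consists entirely of the sentence ``The proof moves along the same line as Proposition 6.17, Proposition 6.18 of \cite{Sch},'' and your sketch is precisely an unpacking of those two propositions in the $K_\Delta$-setting. If anything, you have supplied more detail than the paper itself, and the concern you flag about nilpotents in the completed tensor product is already handled by routing through the characteristic $p$ side via the tilting equivalence, exactly as you outline.
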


\begin{proof}
    The proof moves along the same line as Proposition 6.17, Proposition 6.18 of \cite{Sch}.
\end{proof}


\section{Almost Purity Theorem for Perfectoid Spaces over $K_\Delta$}

Throughout the section, we fix a perfectoid $\Delta-$field $K_\Delta$. We will now prove the almost purity in the category of perfectoid spaces over $K_\Delta$. To do this, let us recall the definitions given in the section \S7 of \cite{Sch} in our context. 

\begin{definition}[\cite{Sch}; Definition 7.1 and Definition 7.2]
    Let $K_\Delta$ be a perfectoid $\Delta-$field.
    \begin{enumerate}
        \item[(i)] A morphism $(R,R^+)\to (S,S^+)$ of affinoid $K_\Delta-$algebras is called \emph{finite \'etale} if $S$ is a finite \'etale $R$-algebra with the induced topology, and $S^+$ is the integral closure of $R^+$ in $S$.
        \item[(ii)] A morphism $f:X\to Y$ of adic spaces over $K_\Delta$ is called \emph{finite \'etale} if there is a cover of $Y$ by open affinoids $V\subset Y$ such that the preimage $U=f^{-1}(V)$ is affinoid, and the associated morphism of affinoid $k-$algebras
        \begin{align*}
            (\mathcal{O}_Y(V),\mathcal{O}^+_Y(V))\to (\mathcal{O}_X(U),\mathcal{O}^+_X(U))
        \end{align*}
        is finite \'etale.
        \item[(iii)] A morphism $f:X\to Y$ of adic spaces over $K_\Delta$ is called \'etale if for any pont $x\in X$ there are open neighborhoods $U$ and $V$ of $x$ and $f(x)$ and a commutative diagram
        \[
        \begin{tikzcd}
            U\arrow[r,"j"]\arrow[dr,swap,"f|_U"] & W\arrow[d,"p"]\\
                                            & V
        \end{tikzcd}
        \]
        where $j$ is an open immersion and $p$ is finite \'etale.
    \end{enumerate}
    \end{definition}

    \begin{proposition}\label{Elkik}
    \begin{enumerate}
        \item[(i)] Let $A$ be a flat $K^\circ_\Delta-$algebra such that $A$ is Henselian along $I_\varpi=(\varpi)$. Then the categories of finite \'etale $A[\varpi^{-1}]$ and finite \'etale $\hat{A}[\varpi^{-1}]$-algebras are equivalent, where $\hat{A}$ is the $\varpi$-adic completion of $A$.
        \item[(ii)] Let $A_i$ be a filtered direct system of complete flat $K^\circ_\Delta$-algebras, and let $A$ be the completion of the direct limit, which is again a complete flat $K^\circ_\Delta$-algebra. Then we have an equivalences of categories
        \begin{align*}
            A[\varpi^{-1}]_{\fet}\cong 2-\varinjlim A_i[\varpi^{-1}]_{\fet}.
        \end{align*}
        In particular, if $R$ is a filtered direct system of perfectoid $K_\Delta-$algebras and $R$ is the completion of their direct limit, then $R_{\fet}\cong S_{\fet}$.
    \end{enumerate}
    \end{proposition}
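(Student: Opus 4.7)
The plan is to follow the strategy of Proposition~7.10 in \cite{Sch}, adapting Gabber--Ramero's almost version of Elkik's approximation theorem to the present multivariate setting. Both statements ultimately rely on the Henselian property of the pair $(A, I_\varpi)$ (explicit in (i), built into completeness in (ii)) together with the almost purity framework developed in the previous sections, where the equivalence between finite \'etale algebras at the generic fibre and almost finite \'etale algebras integrally plays the role of a bridge.

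For part (i), I would first establish fully faithfulness: since $(A, I_\varpi)$ is Henselian, idempotents and factorizations lift uniquely from $A/\varpi = \hat{A}/\varpi$ back to $A$, so morphisms between finite \'etale algebras over $\hat{A}[\varpi^{-1}]$ that come from $A[\varpi^{-1}]$ are determined by their reductions modulo $\varpi$. For essential surjectivity---the substantive content---the key observation is that a finite \'etale $\hat{A}[\varpi^{-1}]$-algebra $\hat{B}$ lifts to an almost finite \'etale $\hat{A}^a$-algebra $\hat{B}^{\circ a}$, via the almost purity equivalence applicable in this context. Using the Henselian Elkik-type approximation, I would construct a compatible almost finite \'etale $A^a$-algebra $B^{\circ a}$ whose $\varpi$-adic completion recovers $\hat{B}^{\circ a}$; inverting $\varpi$ then yields the desired $B$, and base-change back to $\hat{A}[\varpi^{-1}]$ recovers $\hat{B}$ by construction.

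For part (ii), set $R_i := A_i[\varpi^{-1}]$ and $R := A[\varpi^{-1}]$. The base-change functor $2\text{-}\varinjlim (R_i)_{\fet} \to R_{\fet}$ is the natural candidate. For essential surjectivity, a finite \'etale $R$-algebra $B$ is finitely presented, hence cut out by a finite set of relations among finitely many generators; the finitely many coefficients of these relations can be $\varpi$-adically approximated by elements coming from some $A_i$, and part (i) applied to each $A_i$ promotes the approximation to an honest finite \'etale algebra over $\hat{A}_i[\varpi^{-1}]$. Fully faithfulness follows from the compatibility of $\Hom$ between finitely presented objects with filtered colimits. The final assertion for perfectoid systems then follows by setting $A_i = R_i^\circ$ and observing that the completed direct limit of the $R_i^\circ$ is the ring $R^\circ$ of the perfectoid algebra $R$.

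The main obstacle I anticipate is the essential surjectivity in part (i): lifting an \'etale structure across $A \to \hat{A}$ is strictly harder than classical Elkik because the rings $A[\varpi^{-1}]$ and $\hat{A}[\varpi^{-1}]$ are no longer Henselian as pairs on their own. The argument must be routed through the almost category on the integral level---where Henselness of $(A, I_\varpi)$ and $\varpi$-adic completeness cooperate---and only then inverted in $\varpi$. This is precisely where Lemma~\ref{lemma5.1} (flatness, completeness, and the functor $M \mapsto M_*$) and the equivalence $A_{\fet} \cong (A/\varpi)_{\fet}$ recorded earlier become indispensable, and controlling the interaction between almost finite presentation and $\varpi$-adic approximation is the technical heart of the proof.
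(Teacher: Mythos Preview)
Your approach to part (i) has a genuine gap. You propose lifting a finite \'etale $\hat{A}[\varpi^{-1}]$-algebra to an almost finite \'etale $\hat{A}^a$-algebra ``via the almost purity equivalence applicable in this context,'' but part (i) is stated for an \emph{arbitrary} flat $K^\circ_\Delta$-algebra $A$ that is Henselian along $(\varpi)$ --- there is no perfectoid hypothesis on $A$ or $\hat{A}$, so almost purity is not available. Worse, in the paper this proposition is an \emph{input} to the proof of almost purity for perfectoid spaces over $K_\Delta$ (it is invoked in the very next theorem), so routing the argument through almost purity would be circular.

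The paper's proof of (i) is a one-line citation of Proposition~5.4.53 in Gabber--Ramero \cite{ART}, which is a purely commutative-algebra result: for any Henselian pair $(A,I)$, finite \'etale covers of the ``punctured'' spectrum are invariant under $I$-adic completion. This is an Elkik-type approximation statement that holds in complete generality and requires no almost mathematics or perfectoid input. Your instinct to reach for the almost category is natural given the surrounding material, but here it is both unnecessary and unavailable. For part (ii) your sketch via finite presentation and approximation is broadly in line with Scholze's Lemma~7.5 (which the paper simply cites), though the detour through part (i) inside (ii) is not needed: once one knows the direct limit (before completion) is Henselian, finite presentation of \'etale algebras plus part (i) applied once to $\varinjlim A_i$ versus its completion $A$ suffices.
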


    \begin{proof}
        (i) is immediate from Proposition 5.4.53 of \cite{ART}, and for (ii) same proof as in Lemma 7.5 of \cite{Sch} works.
    \end{proof}

    \begin{theorem}[Almost Purity for Perfectoid Spaces over $K_\Delta$]
        Let $(R,R^+)$ be a perfectoid affinoid $K_\Delta-$algebra, and let $X=\spa(R,R^+)$ with tilt $X^\flat$. There is an equivalence of categories between
        \begin{align*}
            X_{\fet}\cong X^\flat_{\fet}.
        \end{align*}
        In particular, for any finite \'etale cover $S/R$, $S$ is perfectoid and $S^{\circ a}$ is finite \'etale over $R^{\circ a}$. Moreover, $S^{\circ a}$ is a uniformly almost finitely generated $R^{\circ a}-$module.
    \end{theorem}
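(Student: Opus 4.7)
The plan is to reduce the mixed-characteristic case to the characteristic $p$ case, where the almost purity for perfectoid $K_\Delta$-algebras was already established in the previous section. Concretely, I will realize the diagram displayed just before Section 7, namely the chain of functors
\begin{align*}
R_{\fet} \cong R^{\circ a}_{\fet} \cong (R^{\circ a}/\varpi)_{\fet} = (R^{\flat\circ a}/\varpi^\flat)_{\fet} \cong R^{\flat\circ a}_{\fet} \cong R^\flat_{\fet},
\end{align*}
each of whose links has already been prepared, and deduce that the composite sends a finite \'etale $R$-algebra $S$ to a finite \'etale $R^\flat$-algebra $S^\flat$, with $S^{\circ a}$ (respectively $S^{\flat\circ a}$) finite \'etale over $R^{\circ a}$ (respectively $R^{\flat\circ a}$).

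First I would treat each link. The equivalence $R^{\circ a}_{\fet} \cong (R^{\circ a}/\varpi)_{\fet}$ follows from the general theorem on lifting finite \'etale covers for a flat, $\varpi$-adically complete $K^{\circ a}_\Delta$-algebra stated earlier in Section 6; applying the same statement on the tilted side yields $R^{\flat\circ a}_{\fet} \cong (R^{\flat\circ a}/\varpi^\flat)_{\fet}$. The central equality is the tilting identification at the integral almost mod-$\varpi$ level, since $R^{\circ a}/\varpi = R^{\flat\circ a}/\varpi^\flat$ via the tilting equivalence for perfectoid affinoid algebras (Theorem \ref{Theorem 6.2}). The last equivalence $R^{\flat\circ a}_{\fet} \cong R^\flat_{\fet}$ is precisely the characteristic $p$ almost purity proposition proved in the previous section, which moreover supplies the uniform almost finite generation of $S^{\flat\circ a}$ as an $R^{\flat\circ a}$-module, together with finite projectivity.

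The remaining link $R_{\fet} \cong R^{\circ a}_{\fet}$ is the heart of the argument. Given finite \'etale $S/R$, I would build $S^{\circ a}$ by going backwards along the chain: tilt $S$ to obtain a finite \'etale $S^\flat/R^\flat$, invoke characteristic $p$ almost purity to produce a finite \'etale uniformly finite projective $S^{\flat\circ a}/R^{\flat\circ a}$, reduce modulo $\varpi^\flat$ to land in $(R^{\circ a}/\varpi)_{\fet}$, and lift across the $\varpi$-adically complete equivalence to a finite \'etale $R^{\circ a}$-algebra $B$. That $B[\varpi^{-1}] \cong S$ as $R$-algebras is then forced by Proposition \ref{Elkik}, because $R^{\circ a}$ is Henselian along $(\varpi)$ (being $\varpi$-adically complete) and both candidates produce the same finite \'etale cover after tilting. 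This simultaneously yields that $S$ is perfectoid, since the Frobenius surjectivity on $S^\circ/\varpi$ is inherited from the isomorphism $B/\varpi \cong S^{\flat\circ a}/\varpi^\flat$, and that $S^{\circ a} = B$ is finite \'etale and uniformly almost finitely generated over $R^{\circ a}$ by pulling back the tilted bound along $\#$.

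To globalize from $\spa(R,R^+)$ to arbitrary perfectoid spaces $X$, I would use that finite \'etale morphisms are defined affinoid-locally on the target and that the tilting correspondence identifies the rational subspaces of $X$ with those of $X^\flat$ compatibly with the perfectoid affinoid structure, as established in the previous section. Thus any cover of $X$ by affinoid perfectoid opens corresponds to such a cover of $X^\flat$, the affinoid equivalence transfers each local finite \'etale cover, and gluing over the identified covers produces the global equivalence $X_{\fet} \cong X^\flat_{\fet}$. The main obstacle I expect is the construction of $B[\varpi^{-1}] \cong S$ in the preceding paragraph: in the classical setting this is Elkik's theorem for Henselian pairs, but in the multivariate $\Delta$-perfectoid setting one has to verify that the non-Noetherian ring $K^\circ_\Delta$, with its family of maximal ideals indexed by $\mathfrak{K}$ and the non-field nature of $K_\Delta$, still provides a valid input to Proposition \ref{Elkik}; once this Henselian lifting is in place, the rest of the argument is a formal manipulation along the chain of equivalences.
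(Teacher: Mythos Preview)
Your proposal has a genuine circularity at the crucial step. In treating the link $R_{\fet} \cong R^{\circ a}_{\fet}$, you write ``tilt $S$ to obtain a finite \'etale $S^\flat/R^\flat$''. But tilting is only defined for perfectoid algebras, and the statement that $S$ is perfectoid is precisely part of what the theorem asserts; you cannot invoke it as an input. Without an independent construction of $S^\flat$, the entire backward traversal of the chain collapses, and so does the argument that $B[\varpi^{-1}] \cong S$.

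The paper's proof circumvents this by establishing $R_{\fet} \cong R^\flat_{\fet}$ \emph{first}, through a pointwise argument rather than by going around the chain. For each $x \in X$, the completed residue field $\widehat{k(x)}$ is a genuine perfectoid field over some $K_s$ (this was proved in Section~7), so the classical Fontaine--Wintenberger theorem gives $\widehat{k(x)}_{\fet} \cong \widehat{k(x^\flat)}_{\fet}$. Then Proposition~\ref{Elkik}(ii) (the $2$-colimit Elkik-type statement) transports this to
\[
2\text{-}\varinjlim_{x\in U}\mathcal{O}_X(U)_{\fet} \cong 2\text{-}\varinjlim_{x\in U}\mathcal{O}_{X^\flat}(U^\flat)_{\fet},
\]
and the global-sections equivalence (Kedlaya--Liu, Lemma~8.2.17(i)) yields $R_{\fet} \cong R^\flat_{\fet}$. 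Only \emph{after} this equivalence is in hand does the paper feed it into the chain
\[
R^\flat_{\fet}\cong R_{\fet}\leftarrow R^{\circ a}_{\fet}\cong (R^{\circ a}/\varpi)_{\fet}=(R^{\flat\circ a}/\varpi^\flat)_{\fet}\cong R^{\flat\circ a}_{\fet}\cong R^\flat_{\fet}
\]
to conclude that the arrow $R^{\circ a}_{\fet}\to R_{\fet}$ is an equivalence and hence that $S^{\circ a}$ is finite \'etale over $R^{\circ a}$. The pointwise Fontaine--Wintenberger step is the missing idea in your approach.
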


    \begin{proof}
        Since the completed stalk $\widehat{k(x)}$ is a perfectoid field over $K_s$ for some $s\in\mathfrak{S}$. Its tilt is the completed stalk $\widehat{k(x^\flat)}$ is also a perfectoid field over $K^\flat_s$. But from the theorem of Fontaine-Wintenberger \cite{FonWin}, $\widehat{k(x)}_{\fet}$ and $\widehat{k(x^\flat)}_{\fet}$ are equivalent. Using Proposition \ref{Elkik} (ii), we have
        \begin{align*}
            2-\varinjlim\limits_{x\in U}(\mathcal{O}_X(U))_{\fet}\cong 2-\varinjlim\limits_{x\in U}(\mathcal{O}_{X^\flat}(U^\flat))_{\fet}
        \end{align*}
        By Lemma 8.2.17 (i) of \cite{KL}, the global section functor 
        \begin{align*}
            (\text{ \'etale adic spaces over $X$ } )&\to (\text{ \'etale algebra over $\Gamma(X,\mathcal{O}_X)$ })\\
              Y &\mapsto \Gamma(Y,\mathcal{O}_Y)
        \end{align*}
    induces an equivalence of categories as in \cite{Sch}, Theorem 7.9. Combining the results obtained in \S 6, we have a chain of equivalence
    \begin{align*}
        R^\flat_{\fet}\cong R_{\fet}\leftarrow R^{\circ a}_{\fet}\cong (R^{\circ a}/\varpi)_{\fet}=(R^{\flat\circ a}/\varpi^\flat)_{\fet}\cong R^{\flat\circ a}_{\fet}\cong R^\flat_{\fet}.
    \end{align*}
    Therefore, if $S$ is finite \'etale over $R$, then $S^\flat$ is finite \'etale over $R^\flat$ and hence following the above chain of equivalence we obtain $S^{\circ a}$ is finite \'etale over $R^{\circ a}$. This chain of equivalence also proves that $S^{\circ a}$ is an almost finitely generated $R^{\circ a}-$module.
    \end{proof}

    This completes the diagram. 

    \[
    \begin{tikzcd}
    R_{\fet} &\arrow[l,swap,"\cong"] A_{\fet}\arrow[d]\arrow[r,"\cong"] & \overline{A}_{\fet}\arrow[d] & \arrow[l,swap,"\cong"] A^\flat_{\fet}\arrow[d] \arrow[r,"\cong"] & R^\flat_{\fet}\arrow[d]\\
   K_\Delta-\Perf  &\arrow[l,swap,"\cong"]K^{\circ a}_\Delta-\Perf\arrow[r,"\cong"] & (K^{\circ a}_\Delta/\varpi)-\Perf &\arrow[l,swap,"\cong"] K^{\flat\circ a}_\Delta-\Perf &\arrow[l,swap,"\cong"] K^\flat_\Delta-\Perf 
   \end{tikzcd} 
   \]

  \begin{example}[Multivariate perfectoid open unit disk]
     Let $R=K^\circ_\Delta[[T^{1/p^\infty}]]$ be the $(\varpi,T)-$adic completion of $K^\circ_\Delta[T^{1/p^\infty}]$. Then, following \cite{Weins}, we can define the \emph{perfectoid open unit disk} $\mathbb{D}$ over $K_\Delta$ as $\spa (R,R^\circ)\setminus \{\varpi=0\}$. Note that $R$ is not a $K_\Delta-$algebra, but we claim that $\mathbb{D}$ is a perfectoid space over $K_\Delta$. Indeed, if we set $R_n=R\left<\frac{T^n}{\varpi}\right>$, then $R^\circ_n=R\left<\left(\frac{T^n}{\varpi}\right)^{1/p^\infty}\right>$, which we consider as $R^+_n$. Then $\mathbb{D}=\spa(R,R^\circ)\setminus\{\varpi=0\}=\bigcup\limits_{n=1}^\infty \spa(R_n,R^+_n)$. This is because $\spa(R_n,R^+_n)$ is the rational subset $U\left(\frac{T^n,\varpi}{\varpi}\right)$. Conversely, suppose $x\in \spa(R,R^\circ)$ with $|\varpi(x)|\neq 0$. Then, since $T$ is topologically nilpotent, there exists $n\in\mathbb{N}$ such that $|T^n(x)|\leq |\varpi(x)|$. Hence, $x\in U\left(\frac{T^n,\varpi}{\varpi}\right)$ for some $n$.
  \end{example}


\bibliographystyle{amsalpha}
\bibliography{name}

\end{document}